\newtheorem{theorem}{Theorem}
\newtheorem{corollary}[theorem]{Corollary}
\newtheorem{definition}{Definition}
\newtheorem{lemma}[theorem]{Lemma}
\newtheorem{proposition}[theorem]{Proposition}
\newtheorem{remark}{Remark}
\begin{document}
\title[Generalized Gevrey ultradistributions]{Generalized Gevrey
ultradistributions}
\author{Khaled BENMERIEM}
\author{Chikh BOUZAR}
\address{Centre Universitaire de Mascara. Algeria }
\email{benmeriemkhaled@yahoo.com }
\address{Department of Mathematics, Oran-Essenia University. Algeria }
\email{bouzar@yahoo.com ; bouzar@univ-oran.dz}
\date{}
\subjclass{46F30, 46F10, 35A18}
\keywords{Generalized functions, Gevrey ultradistributions, Colombeau
generalized functions, Gevrey wave front, Microlocal analysis, Product of
ultradistributions }
\maketitle

\begin{abstract}
We first introduce new algebras of generalized functions
containing Gevrey ultradistributions and then develop a Gevrey
microlocal analysis suitable for these algebras. Finally, we give
an application through an extension of the well-known
H\"{o}rmander's theorem on the wave front of the product of two
distributions.
\end{abstract}

\section{Introduction}

The theory of generalized functions initiated by J. F. Colombeau, see \cite%
{Col2} and \cite{Col3}, in connection with the problem of multiplication of
Schwartz distributions \cite{Schw}, has been developed and applied in
nonlinear and linear problems, \cite{Col3}, \cite{Ober} and \cite{NPS}. The
recent book \cite{GKOS} gives further developments and applications of such
generalized functions. Some methods of constructing algebras of generalized
functions of Colombeau type are given in \cite{A-R}, \cite{GKOS} and \cite%
{Mart}.

Ultradistributions, important in theoretical as well applied fields, see
\cite{Kom}, \cite{LM} and \cite{Rod}, are natural generalization of Schwartz
distributions, and the problem of multiplication of ultradistributions is
still posed. So, it is natural to search for algebras of generalized
functions containing spaces of ultradistributions, to study and to apply
them. This is the purpose of this paper.

First, we introduce new differential algebras of generalized Gevrey
ultradistributions $\mathcal{G}^{\sigma }\left( \Omega \right) $ defined on
an open set $\Omega $\ of $\mathbb{R}^{n}$ as the quotient algebra%
\begin{equation*}
\mathcal{G}^{\sigma }\left( \Omega \right) =\frac{\mathcal{E}_{m}^{\sigma
}\left( \Omega \right) }{\mathcal{N}^{\sigma }\left( \Omega \right) }\text{
\ ,}
\end{equation*}%
where $\mathcal{E}_{m}^{\sigma }(\Omega )$ is the space of $\left(
f_{\varepsilon }\right) _{\varepsilon }\in C^{\infty }\left( \Omega \right)
^{\left] 0,1\right] }$ satisfying for every compact $K$ of $\Omega $, $%
\forall \alpha \in \mathbb{Z}_{+}^{m},\exists k>0,\exists c>0,$ $\exists
\varepsilon _{0}\in \left] 0,1\right] ,\forall \varepsilon \leq \varepsilon
_{0},$
\begin{equation*}
\sup_{x\in K}\left\vert \partial ^{\alpha }f_{\varepsilon }\left( x\right)
\right\vert \leq c\exp \left( k\varepsilon ^{-\frac{1}{2\sigma -1}}\right) ,
\end{equation*}%
and $\mathcal{N}^{\sigma }\left( \Omega \right) $ is the space of $\left(
f_{\varepsilon }\right) _{\varepsilon }\in C^{\infty }\left( \Omega \right)
^{\left] 0,1\right] }$ satisfying for every compact $K$ of $\Omega $,$%
\forall \alpha \in \mathbb{Z}_{+}^{m},\forall k>0,\exists c>0,$ $\exists
\varepsilon _{0}\in \left] 0,1\right] ,\forall \varepsilon \leq \varepsilon
_{0},$
\begin{equation*}
\sup_{x\in K}\left\vert \partial ^{\alpha }f_{\varepsilon }\left( x\right)
\right\vert \leq c\exp \left( -k\varepsilon ^{-\frac{1}{2\sigma -1}}\right)
\end{equation*}%
The functor $\Omega \rightarrow \mathcal{G}^{\sigma }\left( \Omega \right) $
being a sheaf of differential algebras on $\mathbb{R}^{n},$ we show that $%
\mathcal{G}^{\sigma }\left( \Omega \right) $\ contains the space of Gevrey
ultradistributions of order $(3\sigma -1),$ and the following diagram of
embeddings is commutative

\begin{equation*}
\begin{array}{ccc}
E^{\sigma }(\Omega ) & \rightarrow & \mathcal{G}^{\sigma }(\Omega ) \\
& \searrow & \uparrow \\
&  & D_{3\sigma -1}^{\prime }(\Omega )%
\end{array}%
\end{equation*}

We then develop a Gevrey microlocal analysis adapted to these algebras in
the spirit of \cite{Hor}, \cite{Rod} and \cite{NPS}. The starting point of
the Gevrey microlocal analysis in the framework of the algebra $\mathcal{G}%
^{\sigma }(\Omega )$ consists first in introducing the algebra of regular
generalized Gevrey ultradistributions $\mathcal{G}^{\sigma ,\infty }(\Omega )
$\ and then to prove the following fundamental result
\begin{equation*}
\mathcal{G}^{\sigma ,\infty }(\Omega )\cap D_{3\sigma -1}^{\prime }(\Omega
)=E^{\sigma }(\Omega )
\end{equation*}%
The functor $\Omega \rightarrow \mathcal{G}^{\sigma ,\infty }\left( \Omega
\right) $ is a subsheaf of $\mathcal{G}^{\sigma }.$ This permits to define
the generalized Gevrey singular support and then, with the help of the
Fourier transform, the generalized Gevrey wave front of $f\in \mathcal{G}%
^{\sigma }\left( \Omega \right) $, denoted $WF_{g}^{\sigma }\left( f\right) $%
, and further to give its main properties, as $WF_{g}^{\sigma }\left(
T\right) =WF^{\sigma }\left( T\right) ,$ if $T\in D_{3\sigma -1}^{\prime
}\left( \Omega \right) \cap \mathcal{G}^{\sigma }\left( \Omega \right) ,$
and
\begin{equation*}
WF_{g}^{\sigma }\left( P\left( x,D\right) f\right) \subset WF_{g}^{\sigma
}\left( f\right) ,\forall f\in \mathcal{G}^{\sigma }\left( \Omega \right) ,
\end{equation*}%
if $P\left( x,D\right) =\sum\limits_{\left\vert \alpha \right\vert \leq
m}a_{\alpha }\left( x\right) D^{\alpha }$ is a partial differential operator
with $\mathcal{G}^{\sigma ,\infty }\left( \Omega \right) $ coefficients.

Let us note that in \cite{Ben-Bouz 2}, the authors introduced a general well
adapted local and microlocal ultraregular analysis whitin Colombeau algebra $%
\mathcal{G}\left( \Omega \right) $.

Finally, we give an application of the introduced generalized Gevrey
microlocal analysis. The product of two generalized Gevrey
ultradistributions always exists, but there is no final description of the
generalized wave front of this product. Such problem is also still posed in
the Colombeau algebra. In \cite{Hor-Kun}, the well-known H\"{o}rmander's
result on the wave front of the product of two distributions, has been
extended to the case of two Colombeau generalized functions. We show this
result in the case of two generalized Gevrey ultradistributions, namely we
obtain the following result : let $f,g\in \mathcal{G}^{\sigma }\left( \Omega
\right) $, satisfying $\forall x\in \Omega ,$%
\begin{equation*}
\left( x,0\right) \notin WF_{g}^{\sigma }\left( f\right) +WF_{g}^{\sigma
}\left( g\right) ,
\end{equation*}%
then
\begin{equation*}
WF_{g}^{\sigma }\left( fg\right) \subseteq \left( WF_{g}^{\sigma }\left(
f\right) +WF_{g}^{\sigma }\left( g\right) \right) \cup WF_{g}^{\sigma
}\left( f\right) \cup WF_{g}^{\sigma }\left( g\right)
\end{equation*}

\section{Generalized Gevrey ultradistributions}

To define the algebra of generalized Gevrey ultradistributions, we first
introduce the algebra of moderate elements and its ideal of null elements
depending on the Gevrey order $\sigma \geq 1.$ The set $\Omega $\ is a non
void open of $\mathbb{R}^{n}.$

\begin{definition}
The space of moderate elements, denoted $\mathcal{E}_{m}^{\sigma }\left(
\Omega \right) ,$ is the space of $\left( f_{\varepsilon }\right)
_{\varepsilon }\in C^{\infty }\left( \Omega \right) ^{\left] 0,1\right] }$
satisfying for every compact $K$ of $\Omega $, $\forall \alpha \in \mathbb{Z}%
_{+}^{m},\exists k>0,$ $\exists c>0,\exists \varepsilon _{0}\in \left] 0,1%
\right] ,\forall \varepsilon \leq \varepsilon _{0}$,
\begin{equation}
\sup_{x\in K}\left\vert \partial ^{\alpha }f_{\varepsilon }\left( x\right)
\right\vert \leq c\exp \left( k\varepsilon ^{-\frac{1}{2\sigma -1}}\right)
\label{1*1}
\end{equation}%
The space of null elements, denoted $\mathcal{N}^{\sigma }\left( \Omega
\right) ,$ is the space of $\left( f_{\varepsilon }\right) _{\varepsilon
}\in C^{\infty }\left( \Omega \right) ^{\left] 0,1\right] }$ satisfying for
every compact $K$ of $\Omega $,$\forall \alpha \in \mathbb{Z}%
_{+}^{m},\forall k>0,\exists c>0,$ $\exists \varepsilon _{0}\in \left] 0,1%
\right] ,$ $\forall \varepsilon \leq \varepsilon _{0},$
\begin{equation}
\sup_{x\in K}\left\vert \partial ^{\alpha }f_{\varepsilon }\left( x\right)
\right\vert \leq c\exp \left( -k\varepsilon ^{-\frac{1}{2\sigma -1}}\right)
\label{1*2}
\end{equation}
\end{definition}

The main properties of the spaces $\mathcal{E}_{m}^{\sigma }\left( \Omega
\right) $ and $\mathcal{N}^{\sigma }\left( \Omega \right) $\ are given in
the following proposition.

\begin{proposition}
1) The space of moderate elements $\mathcal{E}_{m}^{\sigma }\left( \Omega
\right) $ is an algebra stable by derivation.

2) The space $\mathcal{N}^{\sigma }\left( \Omega \right) $ is an ideal of $%
\mathcal{E}_{m}^{\sigma }\left( \Omega \right) .$
\end{proposition}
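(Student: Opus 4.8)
The plan is to prove the two statements by verifying the defining estimates through the Leibniz rule. Both parts hinge on the fact that sums of exponentials of the form $\exp(k\varepsilon^{-1/(2\sigma-1)})$ absorb constants and that the exponent $\frac{1}{2\sigma-1}$ is treated as fixed throughout; to lighten notation I would set $\rho=\frac{1}{2\sigma-1}$ so the moderate bound reads $c\exp(k\varepsilon^{-\rho})$ and the null bound reads $c\exp(-k\varepsilon^{-\rho})$.

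\emph{Part 1.} First I would check that $\mathcal{E}_{m}^{\sigma}(\Omega)$ is stable under addition and multiplication, the linear (vector space) structure being immediate since the class of moderate bounds is closed under scalar multiples and finite sums. For the product $(f_{\varepsilon}g_{\varepsilon})_{\varepsilon}$, fix a compact $K\subset\Omega$ and a multi-index $\alpha$. By the Leibniz formula,
\begin{equation*}
\partial^{\alpha}\left(f_{\varepsilon}g_{\varepsilon}\right)=\sum_{\beta\leq\alpha}\binom{\alpha}{\beta}\,\partial^{\beta}f_{\varepsilon}\,\partial^{\alpha-\beta}g_{\varepsilon}.
\end{equation*}
For each of the finitely many multi-indices $\beta\leq\alpha$ the moderateness of $(f_{\varepsilon})$ and $(g_{\varepsilon})$ supplies constants $k_{\beta},k'_{\beta}>0$, $c_{\beta},c'_{\beta}>0$ and thresholds $\varepsilon_{0}$ with $\sup_{K}|\partial^{\beta}f_{\varepsilon}|\leq c_{\beta}\exp(k_{\beta}\varepsilon^{-\rho})$ and similarly for $g$. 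Taking $k=\max_{\beta}(k_{\beta}+k'_{\beta})$ and summing, the whole derivative is bounded by $C\exp(k\varepsilon^{-\rho})$ for a suitable constant $C$ and small $\varepsilon$; here I use that $\exp(a\varepsilon^{-\rho})\exp(b\varepsilon^{-\rho})=\exp((a+b)\varepsilon^{-\rho})$ and that finitely many exponentials are dominated by their largest exponent. Stability by derivation is trivial, since differentiating an element of $\mathcal{E}_{m}^{\sigma}(\Omega)$ only shifts the range of multi-indices $\alpha$ over which the estimate is demanded, which is already quantified universally.

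\emph{Part 2.} To see that $\mathcal{N}^{\sigma}(\Omega)$ is an ideal, I would first note it is a linear subspace by the same summation argument, then show it absorbs products with moderate elements. Let $(f_{\varepsilon})\in\mathcal{N}^{\sigma}(\Omega)$ and $(g_{\varepsilon})\in\mathcal{E}_{m}^{\sigma}(\Omega)$; fix $K$, $\alpha$ and an arbitrary $k>0$. Applying Leibniz again, each term is $\partial^{\beta}f_{\varepsilon}\,\partial^{\alpha-\beta}g_{\varepsilon}$, where the moderate factor is bounded by $c'\exp(k'\varepsilon^{-\rho})$ for some fixed $k'>0$. The crucial point is that the null estimate for $(f_{\varepsilon})$ holds for \emph{every} positive constant, so I may apply it with the shifted value $k+k'$ in place of $k$, giving $\sup_{K}|\partial^{\beta}f_{\varepsilon}|\leq c\exp(-(k+k')\varepsilon^{-\rho})$. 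Multiplying the two bounds yields $c\,c'\exp(-(k+k')\varepsilon^{-\rho})\exp(k'\varepsilon^{-\rho})=c\,c'\exp(-k\varepsilon^{-\rho})$, which is exactly the null bound at level $k$. Summing the finitely many Leibniz terms preserves this, so $(f_{\varepsilon}g_{\varepsilon})\in\mathcal{N}^{\sigma}(\Omega)$.

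The one genuinely delicate point, and the step I would treat most carefully, is the bookkeeping of the quantifiers in Part 2: the null condition quantifies $k$ universally while the moderate condition quantifies it existentially, and the argument works precisely because I am free to choose the null constant \emph{after} learning the moderate constant $k'$. I would also make sure the finitely many thresholds $\varepsilon_{0}$ arising from the different $\beta$ (and from $f$ versus $g$) are reconciled by taking their minimum, so that a single $\varepsilon_{0}$ serves the final estimate. Everything else is the standard Colombeau-type verification transposed to the exponential scale $\exp(k\varepsilon^{-\rho})$.
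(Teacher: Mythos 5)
Your proof is correct and takes essentially the same route as the paper's: Leibniz's formula with exponents added multiplicatively for Part 1, and for Part 2 the same quantifier maneuver of invoking the null estimate at the shifted level $k+k'$ after fixing the moderate constant (the paper's choice $k_{2}=\max_{\beta\leq\alpha}k_{1}\left( \beta \right) +k$), with thresholds reconciled by taking minima. The only cosmetic difference is that you take $k=\max_{\beta}\left( k_{\beta}+k_{\beta}^{\prime }\right) $ where the paper takes the sum of the two maxima, which is an equivalent bookkeeping choice.
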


\begin{proof}
1) Let $\left( f_{\varepsilon }\right) _{\varepsilon },\left( g_{\varepsilon
}\right) _{\varepsilon }\in \mathcal{E}_{m}^{\sigma }\left( \Omega \right) $
and $K$ be a compact of $\Omega $, then $\forall \beta \in \mathbb{Z}%
_{+}^{m},$ $\exists k_{1}=k_{1}\left( \beta \right) >0,\exists
c_{1}=c_{1}\left( \beta \right) >0,\exists \varepsilon _{1\beta }\in \left]
0,1\right] ,$ $\forall \varepsilon \leq \varepsilon _{1\beta },$%
\begin{equation}
\sup_{x\in K}\left\vert \partial ^{\beta }f_{\varepsilon }\left( x\right)
\right\vert \leq c_{1}\exp \left( k_{1}\varepsilon ^{-\frac{1}{2\sigma -1}%
}\right) ,  \label{4}
\end{equation}%
$\forall \beta \in \mathbb{Z}_{+}^{m},$ $\exists k_{2}=k_{2}\left( \beta
\right) >0,\exists c_{2}=c_{2}\left( \beta \right) >0,\exists \varepsilon
_{2\beta }\in \left] 0,1\right] ,\forall \varepsilon \leq \varepsilon
_{2\beta },$%
\begin{equation}
\sup_{x\in K}\left\vert \partial ^{\beta }g_{\varepsilon }\left( x\right)
\right\vert \leq c_{2}\exp \left( k_{2}\varepsilon ^{-\frac{1}{2\sigma -1}%
}\right)  \label{5}
\end{equation}%
Let $\alpha \in \mathbb{Z}_{+}^{m},$ then
\begin{equation*}
\left\vert \partial ^{\alpha }\left( f_{\varepsilon }g_{\varepsilon }\right)
\left( x\right) \right\vert \leq \sum_{\beta =0}^{\alpha }\binom{\alpha }{%
\beta }\left\vert \partial ^{\alpha -\beta }f_{\varepsilon }\left( x\right)
\right\vert \left\vert \partial ^{\beta }g_{\varepsilon }\left( x\right)
\right\vert
\end{equation*}%
For $k=\max \left\{ k_{1}\left( \beta \right) :\beta \leq \alpha \right\}
+\max \left\{ k_{2}\left( \beta \right) :\beta \leq \alpha \right\}
,\varepsilon \leq \min \left\{ \varepsilon _{1\beta },\varepsilon _{2\beta
};\left\vert \beta \right\vert \leq \left\vert \alpha \right\vert \right\} $
and $x\in K,$ we have
\begin{eqnarray*}
\exp \left( -k\varepsilon ^{-\frac{1}{2\sigma -1}}\right) \left\vert
\partial ^{\alpha }\left( f_{\varepsilon }g_{\varepsilon }\right) \left(
x\right) \right\vert &\leq &\sum_{\beta =0}^{\alpha }\binom{\alpha }{\beta }%
\exp \left( -k_{1}\varepsilon ^{-\frac{1}{2\sigma -1}}\right) \left\vert
\partial ^{\alpha -\beta }f_{\varepsilon }\left( x\right) \right\vert \\
&&\times \exp \left( -k_{2}\varepsilon ^{-\frac{1}{2\sigma -1}}\right)
\left\vert \partial ^{\beta }g_{\varepsilon }\left( x\right) \right\vert \\
&\leq &\sum_{\beta =0}^{\alpha }\binom{\alpha }{\beta }c_{1}\left( \alpha
-\beta \right) c_{2}\left( \beta \right) =c\left( \alpha \right) ,
\end{eqnarray*}%
i.e. $\left( f_{\varepsilon }g_{\varepsilon }\right) _{\varepsilon }\in
\mathcal{E}_{m}^{\sigma }\left( \Omega \right) $.

It is clear, from (\ref{4}) that for every compact $K$ of $\Omega $, $%
\forall \beta \in \mathbb{Z}_{+}^{m},$ $\exists k_{1}=k_{1}\left( \beta
+1\right) >0,\exists c_{1}=c_{1}\left( \beta +1\right) >0,\exists
\varepsilon _{1\beta }\in \left] 0,1\right] $ such that $\forall x\in
K,\forall \varepsilon \leq \varepsilon _{1\beta },$%
\begin{equation*}
\left\vert \partial ^{\beta }\left( \partial f_{\varepsilon }\right) \left(
x\right) \right\vert \leq c_{1}\exp \left( k_{1}\varepsilon ^{-\frac{1}{%
2\sigma -1}}\right) ,
\end{equation*}%
i.e. $\left( \partial f_{\varepsilon }\right) _{\varepsilon }\in \mathcal{E}%
_{m}^{\sigma }\left( \Omega \right) .$

2) If $\left( g_{\varepsilon }\right) _{\varepsilon }\in \mathcal{N}^{\sigma
}\left( \Omega \right) ,$ for every $K$ compact of $\Omega ,$ $\forall \beta
\in \mathbb{Z}_{+}^{m},\forall k_{2}>0,\exists c_{2}=c_{2}\left( \beta
,k_{2}\right) >0,\exists \varepsilon _{2\beta }\in \left] 0,1\right] ,$%
\begin{equation*}
\left\vert \partial ^{\alpha }g_{\varepsilon }\left( x\right) \right\vert
\leq c_{2}\exp \left( -k_{2}\varepsilon ^{-\frac{1}{2\sigma -1}}\right)
,\forall x\in K,\forall \varepsilon \leq \varepsilon _{2\beta }
\end{equation*}

Let $\alpha \in \mathbb{Z}_{+}^{m}$ and $k>0,$ then
\begin{eqnarray*}
\exp \left( k\varepsilon ^{-\frac{1}{2\sigma -1}}\right) \left\vert \partial
^{\alpha }\left( f_{\varepsilon }g_{\varepsilon }\right) \left( x\right)
\right\vert &\leq &\exp \left( k\varepsilon ^{-\frac{1}{2\sigma -1}}\right)
\sum_{\beta =0}^{\alpha }\binom{\alpha }{\beta }\left\vert \partial ^{\alpha
-\beta }f_{\varepsilon }\left( x\right) \right\vert \times \\
&&\times \left\vert \partial ^{\beta }g_{\varepsilon }\left( x\right)
\right\vert
\end{eqnarray*}%
Let $k_{2}=\max \left\{ k_{1}\left( \beta \right) ;\beta \leq \alpha
\right\} +k$ and $\varepsilon \leq \min \left\{ \varepsilon _{1\beta
},\varepsilon _{2\beta };\beta \leq \alpha \right\} ,$ then $\forall x\in K$%
,
\begin{eqnarray*}
\exp \left( k\varepsilon ^{-\frac{1}{2\sigma -1}}\right) \left\vert \partial
^{\alpha }\left( f_{\varepsilon }g_{\varepsilon }\right) \left( x\right)
\right\vert &\leq &\sum_{\beta =0}^{\alpha }\binom{\alpha }{\beta }\left[
\exp \left( -k_{1}\varepsilon ^{-\frac{1}{2\sigma -1}}\right) \left\vert
\partial ^{\alpha -\beta }f_{\varepsilon }\left( x\right) \right\vert \right.
\\
&&\times \left. \exp \left( k_{2}\varepsilon ^{-\frac{1}{2\sigma -1}}\right)
\left\vert \partial ^{\beta }g_{\varepsilon }\left( x\right) \right\vert %
\right] \\
&\leq &\sum_{\beta =0}^{\alpha }\binom{\alpha }{\beta }c_{1}\left( \alpha
-\beta \right) c_{2}\left( \beta ,k_{2}\right) =c\left( \alpha ,k\right) ,
\end{eqnarray*}%
which shows that $\left( f_{\varepsilon }g_{\varepsilon }\right)
_{\varepsilon }\in \mathcal{N}^{\sigma }\left( \Omega \right) $
\end{proof}

\begin{remark}
The algebra of moderate elements $\mathcal{E}_{m}^{\sigma }\left( \Omega
\right) $\ is not necessary stable by $\sigma -$ultradifferentiable
operators, because the constant $c$ in $\left( \ref{1*1}\right) $ dependents
of $\alpha .$
\end{remark}

According to the topological construction of Colombeau type algebras of
generalized functions, we introduce the desired algebras.

\begin{definition}
The algebra of generalized Gevrey ultradistributions of order $\sigma \geq 1$%
, denoted $\mathcal{G}^{\sigma }\left( \Omega \right) ,$ is the quotient
algebra
\begin{equation*}
\mathcal{G}^{\sigma }\left( \Omega \right) =\frac{\mathcal{E}_{m}^{\sigma
}\left( \Omega \right) }{\mathcal{N}^{\sigma }\left( \Omega \right) }
\end{equation*}
\end{definition}

A comparison of the structure of our algebras $\mathcal{G}^{\sigma }\left(
\Omega \right) $\ and the Colombeau algebra $\mathcal{G}\left( \Omega
\right) $ is given in the following remark.

\begin{remark}
The Colombeau algebra $\mathcal{G}\left( \Omega \right) :=\frac{\mathcal{E}%
_{m}\left( \Omega \right) }{\mathcal{N}\left( \Omega \right) },$ where $%
\mathcal{E}_{m}(\Omega )$ is the space of $\left( f_{\varepsilon }\right)
_{\varepsilon }\in C^{\infty }\left( \Omega \right) ^{\left] 0,1\right] }$
satisfying for every compact $K$ of $\Omega $, $\forall \alpha \in \mathbb{Z}%
_{+}^{m},\exists k>0,\exists c>0,$ $\exists \varepsilon _{0}\in \left] 0,1%
\right] ,\forall \varepsilon \leq \varepsilon _{0},$
\begin{equation*}
\sup_{x\in K}\left\vert \partial ^{\alpha }f_{\varepsilon }\left( x\right)
\right\vert \leq c\varepsilon ^{-k},
\end{equation*}%
and $\mathcal{N}\left( \Omega \right) $ is the space of $\left(
f_{\varepsilon }\right) _{\varepsilon }\in C^{\infty }\left( \Omega \right)
^{\left] 0,1\right] }$ satisfying for every compact $K$ of $\Omega $,$%
\forall \alpha \in \mathbb{Z}_{+}^{m},\forall k>0,\exists c>0,$ $\exists
\varepsilon _{0}\in \left] 0,1\right] ,\forall \varepsilon \leq \varepsilon
_{0},$
\begin{equation*}
\sup_{x\in K}\left\vert \partial ^{\alpha }f_{\varepsilon }\left( x\right)
\right\vert \leq c\varepsilon ^{k}
\end{equation*}%
Due to the inequality
\begin{equation*}
\exp \left( -\varepsilon ^{-\frac{1}{2\sigma -1}}\right) \leq \varepsilon
,\forall \varepsilon \in \left] 0,1\right] ,
\end{equation*}%
we have the strict inclusions $\mathcal{N}^{\sigma }\left( \Omega \right)
\subset \mathcal{N}^{\tau }\left( \Omega \right) \subset \mathcal{N}\left(
\Omega \right) \subset \mathcal{E}_{m}\left( \Omega \right) \subset \mathcal{%
E}_{m}^{\tau }\left( \Omega \right) \subset \mathcal{E}_{m}^{\sigma }\left(
\Omega \right) ,$ with $\sigma <\tau $.
\end{remark}

We have the null characterization of the ideal $\mathcal{N}^{\sigma }\left(
\Omega \right) $.

\begin{proposition}
\label{carN}Let $\left( u_{\epsilon }\right) _{\epsilon }\in \mathcal{E}%
_{m}^{\sigma }\left( \Omega \right) ,$ then $\left( u_{\epsilon }\right)
_{\epsilon }\in \mathcal{N}^{\sigma }\left( \Omega \right) $ if and only if
for every compact $K$ of $\Omega $, $\forall k>0,\exists c>0,$ $\exists
\varepsilon _{0}\in \left] 0,1\right] ,$ $\forall \varepsilon \leq
\varepsilon _{0},$
\begin{equation}
\sup_{x\in K}\left\vert f_{\varepsilon }\left( x\right) \right\vert \leq
c\exp \left( -k\varepsilon ^{-\frac{1}{2\sigma -1}}\right)  \label{1*6}
\end{equation}
\end{proposition}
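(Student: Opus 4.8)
The plan is to treat the two implications separately. One direction is free: if $(u_\varepsilon)_\varepsilon\in\mathcal{N}^\sigma(\Omega)$ then the defining estimate \eqref{1*2} holds for all $\alpha\in\mathbb{Z}_+^m$, in particular for $\alpha=0$, which is precisely \eqref{1*6}. All the content is in the converse, where the sole order-zero bound \eqref{1*6} must be upgraded to the full family of derivative bounds. I would prove by induction on $|\alpha|$ the statement that, for every compact $K\subset\Omega$ and every $k>0$, $\sup_{K}|\partial^\alpha u_\varepsilon|\le c\exp(-k\varepsilon^{-\lambda})$ for small $\varepsilon$, where I abbreviate $\lambda=\frac{1}{2\sigma-1}$; the case $|\alpha|=0$ is the hypothesis.

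For the inductive step fix a compact $K$ and choose a compact $K'\subset\Omega$ containing a closed $d$-neighborhood of $K$ for some $d>0$. To bound $\partial^{\alpha+e_i}u_\varepsilon$ I set $g_\varepsilon=\partial^\alpha u_\varepsilon$ and apply the elementary Landau--Taylor inequality obtained from a second-order Taylor expansion along $e_i$: for $x\in K$, $0<t\le d$ and a suitable $\xi$ on the segment,
\[
\partial_i g_\varepsilon(x)=\frac{g_\varepsilon(x+te_i)-g_\varepsilon(x)}{t}-\frac{t}{2}\,\partial_i^2 g_\varepsilon(\xi),
\]
so that
\[
\sup_{K}|\partial^{\alpha+e_i}u_\varepsilon|\le\frac{2}{t}\sup_{K'}|\partial^\alpha u_\varepsilon|+\frac{t}{2}\sup_{K'}|\partial^{\alpha+2e_i}u_\varepsilon|.
\]
Here the induction hypothesis controls $\sup_{K'}|\partial^\alpha u_\varepsilon|$ (null, with an arbitrary rate of our choosing) while moderateness of $(u_\varepsilon)_\varepsilon$ controls $\sup_{K'}|\partial^{\alpha+2e_i}u_\varepsilon|\le c_1\exp(k_1\varepsilon^{-\lambda})$ for some fixed $k_1>0$ depending only on $\alpha+2e_i$.

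The decisive move is to optimize the free parameter $t$. Given the target $k>0$, I take $t=t(\varepsilon)=\exp(-s\varepsilon^{-\lambda})$ with $s=k_1+k$; since $s>0$ we have $t\to 0$, hence $t\le d$ for small $\varepsilon$, and the second term is bounded by $\frac{c_1}{2}\exp(-k\varepsilon^{-\lambda})$. For the first term I invoke the induction hypothesis on $K'$ with decay rate $k_0=s+k=k_1+2k$, giving $\frac{2}{t}\sup_{K'}|\partial^\alpha u_\varepsilon|\le 2c_0\exp((s-k_0)\varepsilon^{-\lambda})=2c_0\exp(-k\varepsilon^{-\lambda})$. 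Summing yields the desired bound $\sup_{K}|\partial^{\alpha+e_i}u_\varepsilon|\le c\exp(-k\varepsilon^{-\lambda})$, and since every multi-index of length $|\alpha|+1$ has this form the induction closes.

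I expect the only genuinely delicate point to be this balancing of exponents, and it works precisely because the order-zero null estimate \eqref{1*6} is available with an \emph{arbitrarily large} rate $k_0$: this is exactly what pays for the loss $\exp(s\varepsilon^{-\lambda})$ coming from the factor $1/t$ while still defeating the moderate growth $\exp(k_1\varepsilon^{-\lambda})$ of the second derivative. One need only note that $k_1$ depends on $\alpha$ but not on $k$, so the choices $s=k_1+k$ and $k_0=k_1+2k$ are legitimate, and that the various thresholds $\varepsilon_0$ are taken as a minimum.
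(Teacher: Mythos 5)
Your proof is correct and essentially identical to the paper's own argument: the same second-order Taylor expansion on a slightly enlarged compact set, the same exponentially small step $t=\exp\left( -\left( k+k_{1}\right) \varepsilon ^{-\frac{1}{2\sigma -1}}\right)$ (the paper's $r$), and the same inflated decay rate $k_{1}+2k$ (the paper's $2k+k_{1}$) fed into the zero-order/inductive estimate to balance the moderate growth of the second derivative. The only cosmetic point is that the Lagrange remainder at an intermediate point $\xi$ presupposes real-valued functions; the paper handles this by treating real and imaginary parts separately, and alternatively the integral form of the remainder yields the same bound directly.
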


\begin{proof}
Let $\left( u_{\epsilon }\right) _{\varepsilon }\in \mathcal{E}_{m}^{\sigma
}\left( \Omega \right) $ satisfying $\left( \ref{1*6}\right) $, we will show
that $\left( \partial _{i}u_{\epsilon }\right) _{\epsilon }$ satisfy $\left( %
\ref{1*6}\right) $ when $i=1,..,m$, and then it will follow by induction
that $\left( u_{\epsilon }\right) _{\epsilon }\in \mathcal{N}^{\sigma
}\left( \Omega \right) .$

Suppose that $u_{\epsilon }$ has a real values, in the complex case we do
the calculus separately for the real and imaginary part of $u_{\epsilon }$.
Let $K$ be a compact of $\Omega ,$ for $\delta =\min \left( 1,dist\left(
K,\partial \Omega \right) \right) $, set $L=K+\overline{B\left( 0,\frac{%
\delta }{2}\right) },$ then $K\subset \subset L\subset \subset \Omega .$ By
the moderateness of $\left( u_{\epsilon }\right) _{\varepsilon }$, we have $%
\exists k_{1}>0,\exists c_{1}>0,$ $\exists \varepsilon _{1}\in \left] 0,1%
\right] ,\forall \varepsilon \leq \varepsilon _{1}$%
\begin{equation}
\underset{x\in L}{\sup }\left\vert \partial _{i}^{2}u_{\epsilon }\left(
x\right) \right\vert \leq c_{1}\exp \left( k_{1}\varepsilon ^{-\frac{1}{%
2\sigma -1}}\right)  \label{1*7}
\end{equation}%
By the assumption $\left( \ref{1*6}\right) $, $\forall k>0,\exists c_{2}>0,$
$\exists \varepsilon _{2}\in \left] 0,1\right] ,\forall \varepsilon \leq
\varepsilon _{2}$%
\begin{equation}
\underset{x\in L}{\sup }\left\vert u_{\epsilon }\left( x\right) \right\vert
\leq c_{2}\exp \left( -\left( 2k+k_{1}\right) \varepsilon ^{-\frac{1}{%
2\sigma -1}}\right)  \label{1*8}
\end{equation}%
Let $x\in K,\varepsilon $ sufficiently small and $r=\exp \left( -\left(
k+k_{1}\right) \varepsilon ^{-\frac{1}{2\sigma -1}}\right) <\frac{\delta }{2}
$. By Taylor's formula, we have
\begin{equation*}
\partial _{i}u_{\epsilon }\left( x\right) =\frac{\left( u_{\epsilon }\left(
x+re_{i}\right) -u_{\epsilon }\left( x\right) \right) }{r}-\frac{1}{2}%
\partial _{i}^{2}u_{\epsilon }\left( x+\theta re_{i}\right) r,
\end{equation*}%
where $e_{i}$ is i$^{th}$ vector of the canonical base of $\mathbb{R}^{m},$
hence $\left( x+\theta re_{i}\right) \in L,$ and then
\begin{equation*}
\left\vert \partial _{i}u_{\epsilon }\left( x\right) \right\vert \leq
\left\vert u_{\epsilon }\left( x+re_{i}\right) -u_{\epsilon }\left( x\right)
\right\vert r^{-1}+\frac{1}{2}\left\vert \partial _{i}^{2}u_{\epsilon
}\left( x+\theta re_{i}\right) \right\vert r
\end{equation*}%
From $\left( \ref{1*7}\right) $ and $\left( \ref{1*8}\right) :$ $\left\vert
u_{\epsilon }\left( x+re_{i}\right) -u_{\epsilon }\left( x\right)
\right\vert r^{-1}\leq c_{2}\exp \left( -k\varepsilon ^{-\frac{1}{2\sigma -1}%
}\right) $ and $\left\vert \partial _{i}^{2}u_{\epsilon }\left( x+\theta
re_{i}\right) \right\vert r\leq c_{1}\exp \left( -k\varepsilon ^{-\frac{1}{%
2\sigma -1}}\right) ,$ so
\begin{equation*}
\left\vert \partial _{i}u_{\epsilon }\left( x\right) \right\vert \leq c\exp
\left( -k\varepsilon ^{-\frac{1}{2\sigma -1}}\right) ,
\end{equation*}%
which gives the proof.
\end{proof}

\begin{proposition}
If $P$ is a polynomial function and $f=cl\left( f_{\varepsilon }\right)
_{\varepsilon }\in \mathcal{G}^{\sigma }\left( \Omega \right) ,$ then $%
P\left( f\right) =\left( P\left( f_{\varepsilon }\right) \right)
_{\varepsilon }+\mathcal{N}^{\sigma }\left( \Omega \right) $ is well defined
element of $\mathcal{G}^{\sigma }\left( \Omega \right) $.
\end{proposition}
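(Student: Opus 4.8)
The plan is to verify the two requirements hidden in the phrase \emph{well defined element}: first, that the candidate representative $\left( P\left( f_{\varepsilon }\right) \right) _{\varepsilon }$ is moderate, so that it actually determines a class in $\mathcal{G}^{\sigma }\left( \Omega \right) $; and second, that this class is independent of the representative $\left( f_{\varepsilon }\right) _{\varepsilon }$ chosen for $f$. Throughout I write $P\left( t\right) =\sum_{j=0}^{N}a_{j}t^{j}$.

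For moderateness I would argue purely algebraically. Since $\mathcal{E}_{m}^{\sigma }\left( \Omega \right) $ is an algebra stable by the pointwise operations and plainly contains the constant nets $\left( a_{j}\right) _{\varepsilon }$ (these satisfy $\left( \ref{1*1}\right) $ trivially with $\alpha =0$), each power $\left( f_{\varepsilon }^{j}\right) _{\varepsilon }$ lies in $\mathcal{E}_{m}^{\sigma }\left( \Omega \right) $ by iterating the product stability, and the finite linear combination $\left( P\left( f_{\varepsilon }\right) \right) _{\varepsilon }=\sum_{j}a_{j}\left( f_{\varepsilon }^{j}\right) _{\varepsilon }$ again belongs to $\mathcal{E}_{m}^{\sigma }\left( \Omega \right) $. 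Hence the class $\left( P\left( f_{\varepsilon }\right) \right) _{\varepsilon }+\mathcal{N}^{\sigma }\left( \Omega \right) $ is meaningful.

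For independence of the representative I would take a second representative $\left( g_{\varepsilon }\right) _{\varepsilon }$ of $f$, so that $\left( f_{\varepsilon }\right) _{\varepsilon },\left( g_{\varepsilon }\right) _{\varepsilon }\in \mathcal{E}_{m}^{\sigma }\left( \Omega \right) $ while $\left( f_{\varepsilon }-g_{\varepsilon }\right) _{\varepsilon }\in \mathcal{N}^{\sigma }\left( \Omega \right) $. The key step is the telescoping factorization
\begin{equation*}
P\left( f_{\varepsilon }\right) -P\left( g_{\varepsilon }\right) =\sum_{j=1}^{N}a_{j}\left( f_{\varepsilon }^{j}-g_{\varepsilon }^{j}\right) =\left( f_{\varepsilon }-g_{\varepsilon }\right) \sum_{j=1}^{N}a_{j}\sum_{l=0}^{j-1}f_{\varepsilon }^{l}g_{\varepsilon }^{j-1-l},
\end{equation*}
which exhibits $P\left( f_{\varepsilon }\right) -P\left( g_{\varepsilon }\right) $ as the product of the null net $\left( f_{\varepsilon }-g_{\varepsilon }\right) _{\varepsilon }$ with a net that is moderate, being a finite algebraic combination of the moderate nets $\left( f_{\varepsilon }\right) _{\varepsilon }$ and $\left( g_{\varepsilon }\right) _{\varepsilon }$. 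Because $\mathcal{N}^{\sigma }\left( \Omega \right) $ is an ideal of $\mathcal{E}_{m}^{\sigma }\left( \Omega \right) $, this product lies in $\mathcal{N}^{\sigma }\left( \Omega \right) $, so $P\left( f_{\varepsilon }\right) $ and $P\left( g_{\varepsilon }\right) $ represent the same element of $\mathcal{G}^{\sigma }\left( \Omega \right) $.

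I do not expect a genuine obstacle, since the entire argument rests on the algebra and ideal properties already established in the preceding proposition. The only point that will require a moment's care is the observation that $\left( g_{\varepsilon }\right) _{\varepsilon }$ is itself moderate: this is guaranteed by the inclusion $\mathcal{N}^{\sigma }\left( \Omega \right) \subset \mathcal{E}_{m}^{\sigma }\left( \Omega \right) $ together with closure of $\mathcal{E}_{m}^{\sigma }\left( \Omega \right) $ under subtraction, which is precisely what makes the cofactor in the factorization legitimately moderate so that the ideal property applies.
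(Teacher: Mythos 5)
Your proof is correct, and on the crucial second half it is actually more careful than the paper's. For moderateness the paper redoes the estimate by hand: writing $P\left( \xi \right) =\sum_{\left\vert \alpha \right\vert \leq m}a_{\alpha }\xi ^{\alpha }$, it applies the Leibniz formula together with the moderateness bounds on $\partial ^{\beta }f_{\varepsilon }$ to conclude $\left\vert \partial ^{\beta }P\left( f_{\varepsilon }\right) \left( x\right) \right\vert \leq c\exp \left( k\varepsilon ^{-\frac{1}{2\sigma -1}}\right) $; you instead invoke the previously established fact that $\mathcal{E}_{m}^{\sigma }\left( \Omega \right) $ is an algebra containing the constant nets, which is the same content packaged abstractly (the algebra property was itself proved via Leibniz), so either route is fine. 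The genuine difference is in representative-independence: the paper disposes of it with the one-line remark that $\left( f_{\varepsilon }\right) _{\varepsilon }\in \mathcal{N}^{\sigma }\left( \Omega \right) $ implies $\left( P\left( f_{\varepsilon }\right) \right) _{\varepsilon }\in \mathcal{N}^{\sigma }\left( \Omega \right) $ --- which, as literally stated, fails whenever $P$ has a nonzero constant term (take $P\equiv 1$), and in any case is not the statement well-definedness requires. What is required is precisely your telescoping identity $P\left( f_{\varepsilon }\right) -P\left( g_{\varepsilon }\right) =\left( f_{\varepsilon }-g_{\varepsilon }\right) \sum_{j=1}^{N}a_{j}\sum_{l=0}^{j-1}f_{\varepsilon }^{l}g_{\varepsilon }^{j-1-l}$, combined with the ideal property of $\mathcal{N}^{\sigma }\left( \Omega \right) $ in $\mathcal{E}_{m}^{\sigma }\left( \Omega \right) $ and the observation (which you rightly flag) that $\left( g_{\varepsilon }\right) _{\varepsilon }$ is itself moderate; this yields a complete and correct verification where the paper hand-waves. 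One small caveat: the paper's multi-index notation suggests it has in mind polynomials in several variables applied componentwise (in view of the subsequent corollary on $\mathcal{O}_{M}\left( \mathbb{K}^{m}\right) $ and vector-valued $f$); your one-variable telescoping covers the statement as literally formulated, and extends to the several-variable case by telescoping one coordinate at a time.
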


\begin{proof}
Let $\left( f_{\varepsilon }\right) _{\varepsilon }\in \mathcal{E}%
_{m}^{\sigma }\left( \Omega \right) $, $P\left( \xi \right)
=\sum\limits_{\left\vert \alpha \right\vert \leq m}a_{\alpha }\xi ^{\alpha }$
and $K$ be a compact of $\Omega $, then we have $\forall \alpha \in \mathbb{Z%
}_{+}^{m},\exists k=k\left( \alpha \right) >0,$ $\exists c=c\left( \alpha
\right) >0,\exists \varepsilon _{0}=\varepsilon \left( \alpha \right) \in %
\left] 0,1\right] ,\forall \varepsilon \leq \varepsilon _{0}$,
\begin{equation}
\sup_{x\in K}\left\vert \partial ^{\alpha }f_{\varepsilon }\left( x\right)
\right\vert \leq c\exp \left( k\varepsilon ^{-\frac{1}{2\sigma -1}}\right)
\label{com1}
\end{equation}%
Let $\beta \in \mathbb{Z}_{+}^{m}$, so
\begin{equation*}
\left\vert \partial ^{\beta }P\left( f_{\varepsilon }\right) \left( x\right)
\right\vert \leq \sum\limits_{\left\vert \alpha \right\vert \leq
m}\left\vert a_{\alpha }\right\vert \left\vert \partial ^{\beta
}f_{\varepsilon }^{\alpha }\left( x\right) \right\vert ,
\end{equation*}%
by Leibniz formula and (\ref{com1}), we obtain
\begin{equation*}
\left\vert \partial ^{\beta }P\left( f_{\varepsilon }\right) \left( x\right)
\right\vert \leq \sum\limits_{\substack{ \left\vert \alpha \right\vert \leq
m  \\ \gamma \leq \beta }}c_{\alpha ,\gamma }\left( \exp \left( k_{\alpha
,\gamma }\varepsilon ^{-\frac{1}{2\sigma -1}}\right) \right) ^{n_{\alpha
,\gamma }},
\end{equation*}%
where $c_{\alpha ,\gamma }>0$ and $n_{\alpha ,\gamma }\in \mathbb{Z}_{+}$.
Hence
\begin{equation*}
\left\vert \partial ^{\beta }P\left( f_{\varepsilon }\right) \left( x\right)
\right\vert \leq c\exp \left( k\varepsilon ^{-\frac{1}{2\sigma -1}}\right)
\end{equation*}%
One can easily cheek that if $\left( f_{\varepsilon }\right) _{\varepsilon
}\in \mathcal{N}^{\sigma }\left( \Omega \right) ,$ then $\left( P\left(
f_{\varepsilon }\right) \right) _{\varepsilon }\in \mathcal{N}^{\sigma
}\left( \Omega \right) $
\end{proof}

The space of functions slowly increasing, denoted $\mathcal{O}_{M}\left(
\mathbb{K}^{m}\right) ,$ is the space of $C^{\infty }$-functions all
derivatives growing at most like some power of $\left\vert x\right\vert ,$
as $\left\vert x\right\vert \rightarrow +\infty $, where $\mathbb{K}%
^{m}\simeq \mathbb{R}^{m}$ or $\mathbb{R}^{2m}.$

\begin{corollary}
If $v\in \mathcal{O}_{M}\left( \mathbb{K}^{m}\right) $ and $f=\left(
f_{1},f_{2},...,f_{m}\right) \in \mathcal{G}^{\sigma }\left( \Omega \right)
^{m},$ then $v\circ f:=\left( v\circ f_{\varepsilon }\right) _{\varepsilon }+%
\mathcal{N}^{\sigma }\left( \Omega \right) $ is a well defined element of $%
\mathcal{G}^{\sigma }\left( \Omega \right) .$
\end{corollary}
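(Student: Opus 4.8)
The plan is to verify the two conditions ensuring that composition descends to the quotient: that $(v\circ f_\varepsilon)_\varepsilon$ is moderate whenever $(f_\varepsilon)_\varepsilon\in\mathcal{E}_m^\sigma(\Omega)^m$, and that the resulting class is independent of the representative chosen for $f$. For moderateness, fix a compact $K\subset\Omega$ and a multi-index $\beta$. The higher-order chain rule (Fa\`a di Bruno) writes $\partial^\beta(v\circ f_\varepsilon)$ as a finite sum of terms $(\partial^\gamma v)(f_\varepsilon)\prod_j\partial^{\delta_j}f_{i_j,\varepsilon}$ with $|\gamma|\le|\beta|$. Since $v\in\mathcal{O}_M(\mathbb{K}^m)$, each $|\partial^\gamma v(y)|\le C_\gamma(1+|y|)^{N_\gamma}$; evaluating at $y=f_\varepsilon(x)$ and inserting the moderate bound $\sup_{x\in K}|f_\varepsilon(x)|\le c\exp(k\varepsilon^{-1/(2\sigma-1)})$ produces an estimate of the form $C'\exp(N_\gamma k\,\varepsilon^{-1/(2\sigma-1)})$. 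Each factor $\partial^{\delta_j}f_{i_j,\varepsilon}$ is itself moderate, and as only finitely many $\gamma,\delta_j$ occur, the product of the finitely many exponential bounds yields $\sup_{x\in K}|\partial^\beta(v\circ f_\varepsilon)(x)|\le c''\exp(k''\varepsilon^{-1/(2\sigma-1)})$, so $(v\circ f_\varepsilon)_\varepsilon\in\mathcal{E}_m^\sigma(\Omega)$. The point that the growth exponent $N_\gamma$ merely rescales $k$ is harmless, since moderateness only requires the existence of \emph{some} $k>0$.

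For independence of representatives, let $(f_\varepsilon)_\varepsilon$ and $(g_\varepsilon)_\varepsilon$ represent the same element, so $(f_\varepsilon-g_\varepsilon)_\varepsilon\in\mathcal{N}^\sigma(\Omega)^m$. By the first part, $(v\circ f_\varepsilon)_\varepsilon$ and $(v\circ g_\varepsilon)_\varepsilon$ are moderate, hence so is their difference; by Proposition \ref{carN} it then suffices to establish the null estimate at order zero, that is, for the function values alone and without derivatives. The fundamental theorem of calculus gives
\begin{equation*}
v(f_\varepsilon(x))-v(g_\varepsilon(x))=\int_0^1\nabla v\big((1-t)g_\varepsilon(x)+tf_\varepsilon(x)\big)\cdot\big(f_\varepsilon(x)-g_\varepsilon(x)\big)\,dt .
\end{equation*}
The argument $(1-t)g_\varepsilon(x)+tf_\varepsilon(x)$ is bounded on $K$ by a moderate quantity and $\nabla v$ is again slowly increasing, so $\sup_{x\in K,\,t\in[0,1]}|\nabla v(\,\cdot\,)|\le C\exp(k\varepsilon^{-1/(2\sigma-1)})$ for some $k$. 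Since $(f_\varepsilon-g_\varepsilon)_\varepsilon$ is null, for any prescribed $k'>0$ we have $\sup_{x\in K}|f_\varepsilon(x)-g_\varepsilon(x)|\le c\exp(-(k+k')\varepsilon^{-1/(2\sigma-1)})$ for small $\varepsilon$, and multiplying the two bounds yields $\sup_{x\in K}|v(f_\varepsilon(x))-v(g_\varepsilon(x))|\le c'\exp(-k'\varepsilon^{-1/(2\sigma-1)})$. As $k'$ was arbitrary, the order-zero null estimate holds, and Proposition \ref{carN} upgrades it to $(v\circ f_\varepsilon-v\circ g_\varepsilon)_\varepsilon\in\mathcal{N}^\sigma(\Omega)$, which is the desired independence.

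I expect the moderateness step to be the only genuinely computational part, the combinatorics of Fa\`a di Bruno being the main bookkeeping burden. The delicate issue — propagating the null estimate through \emph{all} derivatives — is avoided entirely by invoking the null characterization of Proposition \ref{carN}, which reduces the independence argument to order zero, where the mean value identity together with the moderate/null product closes the estimate cleanly.
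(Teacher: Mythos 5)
Your proof is correct. It is worth noting that the paper offers no argument at all for this corollary: it is stated as an immediate consequence of the preceding proposition on polynomials $P(f)$, whose proof obtains moderateness by expanding $\partial ^{\beta }f_{\varepsilon }^{\alpha }$ with the Leibniz formula and dismisses the null part with ``one can easily check'' --- which for polynomials does follow algebraically, since $P(f_{\varepsilon })-P(g_{\varepsilon })$ is a finite sum of moderate-times-null products and $\mathcal{N}^{\sigma }\left( \Omega \right) $ is an ideal of $\mathcal{E}_{m}^{\sigma }\left( \Omega \right) $. For a general slowly increasing $v$ that algebraic reduction is unavailable, and your write-up supplies exactly the two ingredients the general case needs: the Fa\`{a} di Bruno expansion, where each term $\left( \partial ^{\gamma }v\right) \left( f_{\varepsilon }\right) \prod_{j}\partial ^{\delta _{j}}f_{i_{j},\varepsilon }$ is controlled by the polynomial growth bound $\left\vert \partial ^{\gamma }v\left( y\right) \right\vert \leq C_{\gamma }\left( 1+\left\vert y\right\vert \right) ^{N_{\gamma }}$ composed with the zero-order moderate bound (the rescaling of $k$ by $N_{\gamma }$ being harmless, as you observe, and the finitely many constants being maximized over the terms occurring for fixed $\beta $); and, for independence of representatives, the reduction to the order-zero null estimate via Proposition \ref{carN} followed by the mean value identity, in which the moderate bound on $\nabla v$ along the segment $\left( 1-t\right) g_{\varepsilon }+tf_{\varepsilon }$ is absorbed by the arbitrarily fast null decay of $f_{\varepsilon }-g_{\varepsilon }$. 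This is the standard Colombeau-style composition argument and is strictly more complete than the paper's treatment; the reduction through Proposition \ref{carN} is precisely what spares you from propagating the null estimate through all the derivative terms of Fa\`{a} di Bruno, where mixed moderate factors would otherwise have to be beaten term by term. Two minor points that your argument uses implicitly and could state: $v$ is globally defined on $\mathbb{K}^{m}$, so the segment never leaves the domain of $v$; and in the complex case $\mathbb{K}^{m}\simeq \mathbb{R}^{2m}$ the integral identity should be applied to $v$ viewed as a function of the real and imaginary parts.
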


\section{Generalized point values}

The ring of Gevrey generalized complex numbers, denoted $\mathcal{C}^{\sigma
},$ is defined by the quotient
\begin{equation*}
\mathcal{C}^{\sigma }=\frac{\mathcal{E}_{0}^{\sigma }}{\mathcal{N}%
_{0}^{\sigma }}\text{ \ ,}
\end{equation*}%
where%
\begin{equation*}
\begin{array}{c}
\mathcal{E}_{0}^{\sigma }=\left\{ \left( a_{\varepsilon }\right)
_{\varepsilon }\in \mathbb{C}^{\left] 0,1\right] };\exists k>0,\exists
c>0,\exists \varepsilon _{0}\in \left] 0,1\right] \text{, such that \ \ \ \ }%
\right. \\
\multicolumn{1}{r}{\left. \forall \varepsilon \leq \varepsilon
_{0},\left\vert a_{\varepsilon }\right\vert \leq c\exp \left( k\varepsilon
^{-\frac{1}{2\sigma -1}}\right) \right\}}%
\end{array}%
\end{equation*}
and%
\begin{equation*}
\begin{array}{r}
\mathcal{N}_{0}^{\sigma }=\left\{ \left( a_{\varepsilon }\right)
_{\varepsilon }\in \mathbb{C}^{\left] 0,1\right] };\forall k>0,\exists
c>0,\exists \varepsilon _{0}\in \left] 0,1\right] \text{, such that \ \ }%
\right. \\
\left. \forall \varepsilon \leq \varepsilon _{0},\left\vert a_{\varepsilon
}\right\vert \leq c\exp \left( -k\varepsilon ^{-\frac{1}{2\sigma -1}}\right)
\right\}%
\end{array}%
\end{equation*}%
It is not difficult to see that $\mathcal{E}_{0}^{\sigma }$ is an algebra
and $\mathcal{N}_{0}^{\sigma }$ is an ideal of $\mathcal{E}_{0}^{\sigma }$.
The ring $\mathcal{C}^{\sigma }$ motivates the following, easy to prove,
result.

\begin{proposition}
If $u\in \mathcal{G}^{\sigma }\left( \Omega \right) $ and $x\in \Omega $,
then the element $u\left( x\right) $ represented by $\left( u_{\varepsilon
}\left( x\right) \right) _{\varepsilon }$ is an element of $\mathcal{C}%
^{\sigma }$ independent of the representative$\ \left( u_{\varepsilon
}\right) _{\varepsilon }$ of $u.$
\end{proposition}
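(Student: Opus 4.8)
The plan is to reduce both assertions---membership of $\left( u_{\varepsilon }\left( x\right) \right) _{\varepsilon }$ in $\mathcal{C}^{\sigma }$ and independence of the representative---to the defining estimates of $\mathcal{E}_{m}^{\sigma }\left( \Omega \right) $ and $\mathcal{N}^{\sigma }\left( \Omega \right) $ specialized to the singleton compact set $K=\left\{ x\right\} $ and the zeroth order derivative $\alpha =0$. The essential remark is that $\left\{ x\right\} $ is a compact subset of $\Omega $, so every estimate available for arbitrary compacts applies to it in particular, and the supremum over $\left\{ x\right\} $ collapses to the single value at $x$.

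First I would show that $\left( u_{\varepsilon }\left( x\right) \right) _{\varepsilon }\in \mathcal{E}_{0}^{\sigma }$. Fixing a representative $\left( u_{\varepsilon }\right) _{\varepsilon }\in \mathcal{E}_{m}^{\sigma }\left( \Omega \right) $ of $u$ and applying the moderateness estimate $\left( \ref{1*1}\right) $ with $K=\left\{ x\right\} $ and $\alpha =0$ yields $k>0$, $c>0$ and $\varepsilon _{0}\in \left] 0,1\right] $ such that $\left\vert u_{\varepsilon }\left( x\right) \right\vert =\sup_{y\in \left\{ x\right\} }\left\vert u_{\varepsilon }\left( y\right) \right\vert \leq c\exp \left( k\varepsilon ^{-\frac{1}{2\sigma -1}}\right) $ for all $\varepsilon \leq \varepsilon _{0}$. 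This is exactly the defining condition of $\mathcal{E}_{0}^{\sigma }$, so $\left( u_{\varepsilon }\left( x\right) \right) _{\varepsilon }$ represents an element of $\mathcal{C}^{\sigma }$.

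For independence of the representative, I would take two representatives $\left( u_{\varepsilon }\right) _{\varepsilon }$ and $\left( \tilde{u}_{\varepsilon }\right) _{\varepsilon }$ of the same $u$, so that $\left( u_{\varepsilon }-\tilde{u}_{\varepsilon }\right) _{\varepsilon }\in \mathcal{N}^{\sigma }\left( \Omega \right) $. Specializing the null estimate $\left( \ref{1*2}\right) $ to $K=\left\{ x\right\} $ and $\alpha =0$ shows that for every $k>0$ there are $c>0$ and $\varepsilon _{0}\in \left] 0,1\right] $ with $\left\vert u_{\varepsilon }\left( x\right) -\tilde{u}_{\varepsilon }\left( x\right) \right\vert \leq c\exp \left( -k\varepsilon ^{-\frac{1}{2\sigma -1}}\right) $ for $\varepsilon \leq \varepsilon _{0}$, that is, $\left( u_{\varepsilon }\left( x\right) -\tilde{u}_{\varepsilon }\left( x\right) \right) _{\varepsilon }\in \mathcal{N}_{0}^{\sigma }$. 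Hence the two point values coincide in $\mathcal{C}^{\sigma }$ and $u\left( x\right) $ is well defined.

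I do not expect any genuine obstacle here, consistently with the remark that the result is easy to prove; the only point requiring care is to check that the quantifier patterns match, namely the ``$\exists k$'' of moderateness against the ``$\exists k$'' in $\mathcal{E}_{0}^{\sigma }$, and the ``$\forall k$'' of $\mathcal{N}^{\sigma }$ against the ``$\forall k$'' in $\mathcal{N}_{0}^{\sigma }$. These agree verbatim once the restriction to $K=\left\{ x\right\} $ and $\alpha =0$ is made, which is why no further estimate is needed.
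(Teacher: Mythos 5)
Your proof is correct and is exactly the argument the paper has in mind: the paper states this result without proof (calling it easy), and the intended verification is precisely your specialization of the $\mathcal{E}_{m}^{\sigma }$ and $\mathcal{N}^{\sigma }$ estimates to the compact set $K=\left\{ x\right\} $ with $\alpha =0$, which is also how the paper later handles the analogous proposition for compactly supported generalized points. Your closing check that the quantifier patterns ($\exists k$ versus $\forall k$) match those of $\mathcal{E}_{0}^{\sigma }$ and $\mathcal{N}_{0}^{\sigma }$ is the only subtlety, and you handle it correctly.
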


A generalized Gevrey ultradistribution is not defined by their point values,
we give here an example of generalized Gevrey ultradistribution $f=\left[
\left( f_{\varepsilon }\right) _{\varepsilon }\right] \notin \mathcal{N}%
^{\sigma }\left( \mathbb{R}\right) $, but $\left[ \left( f_{\varepsilon
}\left( x\right) \right) _{\varepsilon }\right] \in \mathcal{N}_{0}^{\sigma
} $ for every $x\in \mathbb{R}.$ Let $\varphi \in D\left( \mathbb{R}\right) $
such that $\varphi \left( 0\right) \neq 0$. For $\varepsilon \in \left] 0,1%
\right] ,$ define
\begin{equation*}
f_{\varepsilon }\left( x\right) =x\exp \left( -\varepsilon ^{-\frac{1}{%
2\sigma -1}}\right) \varphi \left( \frac{x}{\varepsilon }\right) ,x\in
\mathbb{R}
\end{equation*}%
It is clear that $\left( f_{\varepsilon }\right) _{\varepsilon }\in \mathcal{%
E}_{m}^{\sigma }\left( \mathbb{R}\right) $. Let $K$ be a compact
neighborhood of $0$, then
\begin{equation*}
\sup_{K}\left\vert f^{\prime }\left( x\right) \right\vert \geq \left\vert
f_{\varepsilon }^{\prime }\left( 0\right) \right\vert =\exp \left(
-\varepsilon ^{-\frac{1}{2\sigma -1}}\right) \left\vert \varphi \left(
0\right) \right\vert ,
\end{equation*}%
which show that $\left( f_{\varepsilon }\right) _{\varepsilon }\notin
\mathcal{N}^{\sigma }\left( \mathbb{R}\right) $. For any $x_{0}\in \mathbb{R}%
,$ there exists $\varepsilon _{0}$ such that $\varphi \left( \frac{x_{0}}{%
\varepsilon }\right) =0,\forall \varepsilon \leq \varepsilon _{0},$ i.e. $%
f\left( x_{0}\right) \in \mathcal{N}_{0}^{\sigma }.$

In order to give a solution to this situation,\ set
\begin{equation}
\Omega _{M}^{\sigma }=\left\{ \left( x_{\varepsilon }\right) _{\varepsilon
}\in \Omega ^{\left] 0,1\right] }:\exists k>0,\exists c>0,\exists
\varepsilon _{0}>0,\forall \varepsilon \leq \varepsilon _{0},\left\vert
x_{\varepsilon }\right\vert \leq ce^{k\varepsilon ^{-\frac{1}{2\sigma -1}%
}}\right\}
\end{equation}%
Define in $\Omega _{M}^{\sigma }$ the equivalence relation by
\begin{equation}
x_{\varepsilon }\sim y_{\varepsilon }\Longleftrightarrow \forall k>0,\exists
c>0,\exists \varepsilon _{0}>0,\forall \varepsilon \leq \varepsilon
_{0},\left\vert x_{\varepsilon }-y_{\varepsilon }\right\vert \leq
ce^{-k\varepsilon ^{-\frac{1}{2\sigma -1}}}
\end{equation}

\begin{definition}
The set $\widetilde{\Omega }^{\sigma }=\Omega _{M}^{\sigma }/\sim $ is
called the set of generalized Gevrey points. The set of compactly supported
Gevrey points is defined by
\begin{equation}
\widetilde{\Omega }_{c}^{\sigma }=\left\{ \widetilde{x}=\left[ \left(
x_{\varepsilon }\right) _{\varepsilon }\right] \in \widetilde{\Omega }%
^{\sigma }:\exists K\text{ a compact set of }\Omega ,\exists \varepsilon
_{0}>0,\forall \varepsilon \leq \varepsilon _{0},x_{\varepsilon }\in
K\right\}
\end{equation}
\end{definition}

\begin{remark}
It is easy to see that $\widetilde{\Omega }_{c}^{\sigma }$-property does not
depend on the choice of the representative.
\end{remark}

\begin{proposition}
Let $f\in \mathcal{G}^{\sigma }\left( \Omega \right) $ and $\widetilde{x}=%
\left[ \left( x_{\varepsilon }\right) _{\varepsilon }\right] \in \widetilde{%
\Omega }_{c}^{\sigma },$ then the generalized Gevrey point value of $f$ at $%
\widetilde{x}$, i.e.
\begin{equation*}
f\left( \widetilde{x}\right) =\left[ \left( f_{\varepsilon }\left(
x_{\varepsilon }\right) \right) _{\varepsilon }\right]
\end{equation*}
is a well-defined element of the algebra of generalized Gevrey complex
numbers $\mathcal{C}^{\sigma }.$
\end{proposition}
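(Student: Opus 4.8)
The plan is to verify two things: that the net $\left( f_{\varepsilon }\left( x_{\varepsilon }\right) \right) _{\varepsilon }$ is moderate, i.e. lies in $\mathcal{E}_{0}^{\sigma }$, and that its class in $\mathcal{C}^{\sigma }$ is unchanged if we replace $\left( f_{\varepsilon }\right) _{\varepsilon }$ by another representative of $f$, or $\left( x_{\varepsilon }\right) _{\varepsilon }$ by another representative of $\widetilde{x}$. Since $\widetilde{x}\in \widetilde{\Omega }_{c}^{\sigma }$, I first fix a compact $K\subset \Omega $ and an $\varepsilon _{0}$ with $x_{\varepsilon }\in K$ for $\varepsilon \leq \varepsilon _{0}$. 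Applying the defining estimate $\left( \ref{1*1}\right) $ of $\mathcal{E}_{m}^{\sigma }\left( \Omega \right) $ with $\alpha =0$ on $K$ gives at once $\left\vert f_{\varepsilon }\left( x_{\varepsilon }\right) \right\vert \leq \sup_{x\in K}\left\vert f_{\varepsilon }\left( x\right) \right\vert \leq c\exp \left( k\varepsilon ^{-\frac{1}{2\sigma -1}}\right) $, so that $\left( f_{\varepsilon }\left( x_{\varepsilon }\right) \right) _{\varepsilon }\in \mathcal{E}_{0}^{\sigma }$.

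Independence of the representative of $f$ is handled by Proposition \ref{carN}. If $\left( f_{\varepsilon }\right) _{\varepsilon }$ and $\left( \widetilde{f}_{\varepsilon }\right) _{\varepsilon }$ represent the same $f$, their difference lies in $\mathcal{N}^{\sigma }\left( \Omega \right) $, so by the null characterization the zeroth-order estimate $\left( \ref{1*6}\right) $ holds on $K$: for every $k>0$ one has $\sup_{x\in K}\left\vert f_{\varepsilon }\left( x\right) -\widetilde{f}_{\varepsilon }\left( x\right) \right\vert \leq c\exp \left( -k\varepsilon ^{-\frac{1}{2\sigma -1}}\right) $. Evaluating at $x_{\varepsilon }\in K$ shows $\left( f_{\varepsilon }\left( x_{\varepsilon }\right) -\widetilde{f}_{\varepsilon }\left( x_{\varepsilon }\right) \right) _{\varepsilon }\in \mathcal{N}_{0}^{\sigma }$.

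The remaining and more delicate point is independence of the representative of $\widetilde{x}$. Suppose $\left( x_{\varepsilon }\right) _{\varepsilon }\sim \left( y_{\varepsilon }\right) _{\varepsilon }$, both compactly supported in a common $K$ for small $\varepsilon $. The idea is to estimate the increment via the mean value theorem, writing $f_{\varepsilon }\left( x_{\varepsilon }\right) -f_{\varepsilon }\left( y_{\varepsilon }\right) =\nabla f_{\varepsilon }\left( \zeta _{\varepsilon }\right) \cdot \left( x_{\varepsilon }-y_{\varepsilon }\right) $ for some $\zeta _{\varepsilon }$ on the segment joining $x_{\varepsilon }$ and $y_{\varepsilon }$. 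The first factor is controlled by the moderateness of the first-order derivatives of $f_{\varepsilon }$ (estimate $\left( \ref{1*1}\right) $ with $\left\vert \alpha \right\vert =1$) by $c\exp \left( k_{1}\varepsilon ^{-\frac{1}{2\sigma -1}}\right) $ with $k_{1}$ fixed; the second factor is, by the definition of $\sim $, bounded by $c^{\prime }\exp \left( -k^{\prime }\varepsilon ^{-\frac{1}{2\sigma -1}}\right) $ for every $k^{\prime }$. Choosing $k^{\prime }=k+k_{1}$ for each prescribed $k$ yields $\left\vert f_{\varepsilon }\left( x_{\varepsilon }\right) -f_{\varepsilon }\left( y_{\varepsilon }\right) \right\vert \leq C\exp \left( -k\varepsilon ^{-\frac{1}{2\sigma -1}}\right) $ for all $k$, i.e. the difference lies in $\mathcal{N}_{0}^{\sigma }$.

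The main obstacle here is geometric rather than analytic: to apply the mean value theorem together with the derivative bound I must guarantee that the whole segment $\left[ x_{\varepsilon },y_{\varepsilon }\right] $ stays inside a fixed compact subset of $\Omega $ for all small $\varepsilon $. This is where I exploit that $\left\vert x_{\varepsilon }-y_{\varepsilon }\right\vert $ decays faster than any $\exp \left( -k\varepsilon ^{-\frac{1}{2\sigma -1}}\right) $: taking $\delta =\min \left( 1,dist\left( K,\partial \Omega \right) \right) $, for $\varepsilon $ small enough one has $\left\vert x_{\varepsilon }-y_{\varepsilon }\right\vert <\frac{\delta }{2}$, so the segment is contained in $L=K+\overline{B\left( 0,\frac{\delta }{2}\right) }\subset \subset \Omega $, on which the first-order derivative bound holds with a single exponent $k_{1}$. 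Combining the three steps shows that $f\left( \widetilde{x}\right) $ is a well-defined element of $\mathcal{C}^{\sigma }$.
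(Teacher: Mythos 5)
Your proposal is correct and follows essentially the same route as the paper: moderateness via the sup over a compact containing the $x_{\varepsilon}$, independence of the representative of $f$ via the zeroth-order null estimate evaluated at $x_{\varepsilon}$ (this is in fact immediate from the definition of $\mathcal{N}^{\sigma}\left( \Omega \right) $ with $\alpha =0$, no need for the null characterization, which goes the other way), and independence of the representative of $\widetilde{x}$ via a mean-value/gradient estimate combining a moderate bound on first derivatives with the super-exponential smallness of $\left\vert x_{\varepsilon }-y_{\varepsilon }\right\vert $ --- where you moreover make explicit the segment-containment point that the paper glosses over with ``remains within some compact''. One cosmetic remark: since the $f_{\varepsilon }$ may be complex-valued, the exact mean-value identity $f_{\varepsilon }\left( x_{\varepsilon }\right) -f_{\varepsilon }\left( y_{\varepsilon }\right) =\nabla f_{\varepsilon }\left( \zeta _{\varepsilon }\right) \cdot \left( x_{\varepsilon }-y_{\varepsilon }\right) $ is not available as stated and should be replaced by the integral form $f_{\varepsilon }\left( x_{\varepsilon }\right) -f_{\varepsilon }\left( y_{\varepsilon }\right) =\int_{0}^{1}\nabla f_{\varepsilon }\bigl( x_{\varepsilon }+t\left( y_{\varepsilon }-x_{\varepsilon }\right) \bigr) \cdot \left( x_{\varepsilon }-y_{\varepsilon }\right) dt$, as the paper does, or applied separately to real and imaginary parts; your estimates then go through unchanged.
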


\begin{proof}
Let $f\in \mathcal{G}^{\sigma }\left( \Omega \right) $ and $\widetilde{x}=%
\left[ \left( x_{\varepsilon }\right) _{\varepsilon }\right] \in \widetilde{%
\Omega }_{c}^{\sigma }$, there exists a compact $K$ of $\Omega $ such that $%
x_{\varepsilon }\in K$ for $\varepsilon $ small, then $\exists k>0,\exists
c>0,\exists \varepsilon _{0}>0,\forall \varepsilon \leq \varepsilon _{0},$%
\begin{equation*}
\left\vert f_{\varepsilon }\left( x_{\varepsilon }\right) \right\vert \leq
\sup_{x\in K}\left\vert f_{\varepsilon }\left( x\right) \right\vert \leq
c\exp \left( k\varepsilon ^{-\frac{1}{2\sigma -1}}\right)
\end{equation*}%
Therefore $\left( f_{\varepsilon }\left( x_{\varepsilon }\right) \right)
_{\varepsilon }\in \mathcal{E}_{0}^{\sigma }$, and it is clear that if $f\in
\mathcal{N}^{\sigma }\left( \Omega \right) ,$ then $\left( f_{\varepsilon
}\left( x_{\varepsilon }\right) \right) _{\varepsilon }\in \mathcal{N}%
_{0}^{\sigma }$, i.e. $f\left( \widetilde{x}\right) $ does not depend on the
choice of the representative $\left( f_{\varepsilon }\right) _{\varepsilon }$%
.

Let now $\widetilde{x}=\left[ \left( x_{\varepsilon }\right) _{\varepsilon }%
\right] \sim \widetilde{y}=\left[ \left( y_{\varepsilon }\right)
_{\varepsilon }\right] $, then $\forall k>0,\exists c>0,\exists \varepsilon
_{0}>0,\forall \varepsilon \leq \varepsilon _{0},$%
\begin{equation*}
\left\vert x_{\varepsilon }-y_{\varepsilon }\right\vert \leq c\exp \left(
-k\varepsilon ^{-\frac{1}{2\sigma -1}}\right)
\end{equation*}%
Since $\left( f_{\varepsilon }\right) _{\varepsilon }\in \mathcal{E}^{\sigma
}\left( \Omega \right) ,$ so for every compact $K$ of $\Omega $, $\forall
j\in \left\{ 1,m\right\} ,\exists k_{j}>0,\exists c_{j}>0,\exists
\varepsilon _{j}>0,\forall \varepsilon \leq \varepsilon _{j},$%
\begin{equation*}
\sup_{x\in K}\left\vert \frac{\partial }{\partial x_{j}}f_{\varepsilon
}\left( x\right) \right\vert \leq c_{j}\exp \left( k_{j}\varepsilon ^{-\frac{%
1}{2\sigma -1}}\right)
\end{equation*}%
We have
\begin{equation*}
\left\vert f_{\varepsilon }\left( x_{\varepsilon }\right) -f_{\varepsilon
}\left( y_{\varepsilon }\right) \right\vert \leq \left\vert x_{\varepsilon
}-y_{\varepsilon }\right\vert \sum_{j=1}^{m}\int_{0}^{1}\left\vert \left(
\frac{\partial }{\partial x_{j}}f_{\varepsilon }\right) \left(
x_{\varepsilon }+t\left( y_{\varepsilon }-x_{\varepsilon }\right) \right)
\right\vert dt,
\end{equation*}%
and $x_{\varepsilon }+t\left( y_{\varepsilon }-x_{\varepsilon }\right) $
remains within some compact $K$ of $\Omega $ for $\varepsilon \leq
\varepsilon ^{\prime }$. Let $k^{\prime }>0$, then for $k+k^{\prime
}=\sup\limits_{j}k_{j}$ and $\varepsilon \leq \min \left( \varepsilon
^{\prime },\varepsilon _{0},\varepsilon _{j}:i=1,m\right) ,$ we have
\begin{equation*}
\left\vert f_{\varepsilon }\left( x_{\varepsilon }\right) -f_{\varepsilon
}\left( y_{\varepsilon }\right) \right\vert \leq c\exp \left( -k^{\prime
}\varepsilon ^{-\frac{1}{2\sigma -1}}\right) \text{,}
\end{equation*}%
which gives $\left( f_{\varepsilon }\left( x_{\varepsilon }\right)
-f_{\varepsilon }\left( y_{\varepsilon }\right) \right) _{\varepsilon }\in
\mathcal{N}_{0}^{\sigma }.$
\end{proof}

The characterization of nullity of $f$ $\in \mathcal{G}^{\sigma }\left(
\Omega \right) $ is given by the following theorem.

\begin{theorem}
\label{th-car-p-v}Let $f\in \mathcal{G}^{\sigma }\left( \Omega \right) ,$
then
\begin{equation*}
f=0\text{ in }\mathcal{G}^{\sigma }\left( \Omega \right) \Longleftrightarrow
f\left( \widetilde{x}\right) =0\text{ in }\mathcal{C}^{\sigma }\text{ for
all }\widetilde{x}\in \widetilde{\Omega }_{c}^{\sigma }
\end{equation*}
\end{theorem}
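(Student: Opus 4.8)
The plan is to prove the two implications separately: the direct one is immediate from the definitions, while the converse requires an argument by contradiction combined with the null characterization of Proposition \ref{carN}.

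For the direct implication, suppose $f=0$ in $\mathcal{G}^{\sigma}\left( \Omega\right) $, i.e. a representative $\left( f_{\varepsilon}\right) _{\varepsilon}$ belongs to $\mathcal{N}^{\sigma}\left( \Omega\right) $. Given $\widetilde{x}=\left[ \left( x_{\varepsilon}\right) _{\varepsilon}\right] \in\widetilde{\Omega}_{c}^{\sigma}$, I would pick a compact $K$ of $\Omega$ and $\varepsilon_{0}$ with $x_{\varepsilon}\in K$ for $\varepsilon\leq\varepsilon_{0}$. Then for every $k>0$ there exist $c>0$ and $\varepsilon_{1}$ such that $\left\vert f_{\varepsilon}\left( x_{\varepsilon}\right) \right\vert \leq\sup_{x\in K}\left\vert f_{\varepsilon}\left( x\right) \right\vert \leq c\exp\left( -k\varepsilon^{-\frac{1}{2\sigma-1}}\right) $ for $\varepsilon$ small, so $\left( f_{\varepsilon}\left( x_{\varepsilon}\right) \right) _{\varepsilon}\in\mathcal{N}_{0}^{\sigma}$, that is $f\left( \widetilde{x}\right) =0$.

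For the converse, the decisive reduction is to observe that, by Proposition \ref{carN}, it suffices to establish on every compact $K$ of $\Omega$ the \emph{zeroth-order} estimate $\sup_{x\in K}\left\vert f_{\varepsilon}\left( x\right) \right\vert \leq c\exp\left( -k\varepsilon^{-\frac{1}{2\sigma-1}}\right) $ for every $k>0$; this is essential, since generalized point values detect $f_{\varepsilon}$ itself but carry no information about its derivatives. Assume this fails for some compact $K$. Then there is a fixed $k>0$ such that, choosing $c=n$ and $\varepsilon_{0}=\frac{1}{n}$, one finds for each $n$ some $\varepsilon_{n}\leq\frac{1}{n}$ with $\sup_{x\in K}\left\vert f_{\varepsilon_{n}}\left( x\right) \right\vert >n\exp\left( -k\varepsilon_{n}^{-\frac{1}{2\sigma-1}}\right) $, and passing to a subsequence we may take $\varepsilon_{n}\downarrow0$. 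By compactness of $K$ and continuity of $f_{\varepsilon_{n}}$, I would select $x_{n}\in K$ attaining the supremum, so that $\left\vert f_{\varepsilon_{n}}\left( x_{n}\right) \right\vert >n\exp\left( -k\varepsilon_{n}^{-\frac{1}{2\sigma-1}}\right) $.

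The point of the construction is then to package this failure into a single compactly supported generalized point: define $x_{\varepsilon}=x_{n}$ when $\varepsilon=\varepsilon_{n}$ and $x_{\varepsilon}=x_{1}$ otherwise. Since $x_{\varepsilon}\in K$ for all $\varepsilon$, the family $\left( x_{\varepsilon}\right) _{\varepsilon}$ lies in $\Omega_{M}^{\sigma}$ and $\widetilde{x}=\left[ \left( x_{\varepsilon}\right) _{\varepsilon}\right] \in\widetilde{\Omega}_{c}^{\sigma}$. The hypothesis then yields $f\left( \widetilde{x}\right) =\left[ \left( f_{\varepsilon}\left( x_{\varepsilon}\right) \right) _{\varepsilon}\right] =0$ in $\mathcal{C}^{\sigma}$, so taking $k$ itself in the definition of $\mathcal{N}_{0}^{\sigma}$ there is $c>0$ with $\left\vert f_{\varepsilon_{n}}\left( x_{n}\right) \right\vert \leq c\exp\left( -k\varepsilon_{n}^{-\frac{1}{2\sigma-1}}\right) $ for $n$ large; comparing with the lower bound forces $n<c$ for all large $n$, a contradiction. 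Hence the zeroth-order estimate holds on every compact, and Proposition \ref{carN} gives $\left( f_{\varepsilon}\right) _{\varepsilon}\in\mathcal{N}^{\sigma}\left( \Omega\right) $, i.e. $f=0$. The main obstacle is precisely this converse: converting a quantitative failure of the null estimate into a bona fide compactly supported Gevrey point, which both compels the use of Proposition \ref{carN} to avoid derivative bounds and requires careful bookkeeping of the constants so that the extracted sequence produces a genuine contradiction.
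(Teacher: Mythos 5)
Your proof is correct and follows essentially the same route as the paper: the direct implication from the definitions, and for the converse, negating the zeroth-order null estimate of Proposition \ref{carN}, extracting a sequence $\varepsilon_{n}\searrow 0$ and points $x_{n}\in K$ violating the bound, assembling them into a compactly supported generalized point $\widetilde{x}\in\widetilde{\Omega}_{c}^{\sigma}$, and contradicting $f\left( \widetilde{x}\right) =0$. The only differences are bookkeeping ones (you take $c=n$, $\varepsilon_{0}=1/n$ where the paper fixes $c=1$ and contradicts with a slightly larger $k$; you extend the net by a constant value rather than piecewise on the intervals $\left] \varepsilon_{m+1},\varepsilon_{m}\right]$), which do not change the argument.
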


\begin{proof}
It is easy to see that if $f\in \mathcal{N}^{\sigma }\left( \Omega \right) $
then $f\left( \widetilde{x}\right) \in \mathcal{N}_{0}^{\sigma },\forall
\widetilde{x}\in \widetilde{\Omega }_{c}^{\sigma }$. Suppose that $f\neq 0$
in $\mathcal{G}^{\sigma }\left( \Omega \right) ,$ then by the
characterization of $\mathcal{N}^{\sigma }\left( \Omega \right) $ we have,
there exists a compact $K$ of $\Omega ,\exists k>0,\forall c>0,\forall
\varepsilon _{0}>0,\exists \varepsilon \leq \varepsilon _{0},$
\begin{equation*}
\sup_{K}\left\vert f_{\varepsilon }\left( x\right) \right\vert >c\exp \left(
-k\varepsilon ^{-\frac{1}{2\sigma -1}}\right)
\end{equation*}%
So there exists a sequence $\varepsilon _{m}\searrow 0$ and $x_{m}\in K$
such that $\forall m\in \mathbb{Z}^{+},$
\begin{equation}
\left\vert f_{\varepsilon _{m}}\left( x_{m}\right) \right\vert >\exp \left(
-k\varepsilon _{m}^{-\frac{1}{2\sigma -1}}\right)  \label{1*9}
\end{equation}%
For $\varepsilon >0$ we set $x_{\varepsilon }=x_{m}$ when $\varepsilon
_{m+1}<\varepsilon \leq \varepsilon _{m}.$ We have $\left( x_{\varepsilon
}\right) _{\varepsilon }\in \Omega _{M}^{\sigma }$ with values in $K$, so $%
\widetilde{x}=\left[ \left( x_{\varepsilon }\right) _{\varepsilon }\right]
\in \widetilde{\Omega }_{c}^{\sigma }$ and (\ref{1*9}) means that$\ \left(
f_{\varepsilon }\left( x_{\varepsilon }\right) \right) _{\varepsilon }\notin
\mathcal{N}_{0}^{\sigma }$, i.e. $f\left( \widetilde{x}\right) \neq 0$ in $%
\mathcal{C}^{\sigma }.$
\end{proof}

\section{Embedding of Gevrey ultradistributions with compact support}

We recall some definitions and results on Gevrey ultradistributions, see
\cite{Kom}, \cite{LM} or \cite{Rod}.

\begin{definition}
A function $f$ $\in E^{\sigma }\left( \Omega \right) ,$ if $f\in C^{\infty
}\left( \Omega \right) $ and for every compact $K$ of $\Omega $, $\exists
c>0,\forall \alpha \in \mathbb{Z}_{+}^{m},$%
\begin{equation*}
\sup_{x\in K}\left| \partial ^{\alpha }f\left( x\right) \right| \leq
c^{\left| \alpha \right| +1}\left( \alpha !\right) ^{\sigma }
\end{equation*}
\end{definition}

Obviously we have $E^{t}\left( \Omega \right) \subset E^{\sigma }\left(
\Omega \right) $ if $1\leq t\leq \sigma .$ It is well known that $%
E^{1}\left( \Omega \right) =A\left( \Omega \right) $ is the space of all
real analytic functions in $\Omega $. Denote by $D^{\sigma }\left( \Omega
\right) $ the space $E^{\sigma }\left( \Omega \right) \cap C_{0}^{\infty
}\left( \Omega \right) ,$ then $D^{\sigma }\left( \Omega \right) $\ is non
trivial if and only if $\sigma >1.$ The topological dual of $D^{\sigma
}\left( \Omega \right) ,$ denoted $D_{\sigma }^{\prime }\left( \Omega
\right) ,$\ is called the space of Gevrey ultradistributions of order $%
\sigma .$\ The space $E_{\sigma }^{\prime }\left( \Omega \right) $ is the
topological dual of $E^{\sigma }\left( \Omega \right) $\ and is identified
with\ the space of Gevrey ultradistributions with compact support.

\begin{definition}
A differential operator of infinite order $P\left( D\right)
=\sum\limits_{\gamma \in \mathbb{Z}_{+}^{m}}a_{\gamma }D^{\gamma }$ is
called a $\sigma $-ultradifferential operator, if for every $h>0$ there
exist $c>0$ such that $\forall \gamma \in \mathbb{Z}_{+}^{m},$
\begin{equation}
\left\vert a_{\gamma }\right\vert \leq c\frac{h^{\left\vert \gamma
\right\vert }}{\left( \gamma !\right) ^{\sigma }}  \label{12}
\end{equation}
\end{definition}

The importance of $\sigma $-ultradifferential operators\ lies in the
following result.

\begin{proposition}
Let $T\in E_{\sigma }^{\prime }\left( \Omega \right) ,\sigma >1$ and $%
suppT\subset K,$ then there exist a $\sigma $-ultradifferential operator $%
P\left( D\right) =\sum\limits_{\gamma \in \mathbb{Z}_{+}^{m}}a_{\gamma
}D^{\gamma }$ , $M>0$\ and continuous functions $f_{\gamma }\in C_{0}\left(
K\right) $ such that $\sup\limits_{\gamma \in \mathbb{Z}_{+}^{m},x\in
K}\left\vert f_{\gamma }\left( x\right) \right\vert \leq M$ and
\begin{equation*}
T=\sum_{\gamma \in \mathbb{Z}_{+}^{m}}a_{\gamma }D^{\gamma }f_{\gamma }
\end{equation*}
\end{proposition}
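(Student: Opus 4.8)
The plan is to reduce the statement to the classical structure theorem for compactly supported Roumieu ultradistributions, which I would prove by a Hahn--Banach / Riesz argument followed by a single integration. \emph{First (continuity estimate):} since $\mathrm{supp}\,T\subset K$ and $T$ is continuous on the Roumieu space $E^\sigma(\Omega)=E^{\{\sigma\}}(\Omega)$, I would record the quantitative form of continuity adapted to the support, namely that for every $h>0$ there is $C_h>0$ with
\[ \left|\langle T,\varphi\rangle\right|\le C_h\sup_{\gamma\in\mathbb Z_+^m,\;x\in K}\frac{\left|\partial^\gamma\varphi(x)\right|}{h^{|\gamma|}(\gamma!)^\sigma},\qquad \varphi\in E^\sigma(\Omega), \]
the sup being reduced to $K$ itself by a Gevrey cut-off equal to $1$ near $K$. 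The point is that the map $\varphi\mapsto\big(\partial^\gamma\varphi|_K/(h^{|\gamma|}(\gamma!)^\sigma)\big)_\gamma$ sends $E^\sigma$ into the Banach space $c_0\big(\mathbb Z_+^m,C(K)\big)$ of $C(K)$-valued null sequences with the sup-norm, on which $T$ is bounded.

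\emph{Second (Hahn--Banach and Riesz):} fixing $h$, I would extend this bounded functional to all of $c_0\big(\mathbb Z_+^m,C(K)\big)$ and identify its dual with $\ell^1\big(\mathbb Z_+^m,\mathcal M(K)\big)$, the summable sequences of Radon measures on $K$. This produces measures $\nu_\gamma$ on $K$ with $\sum_\gamma\|\nu_\gamma\|\le C_h$ and $\langle T,\varphi\rangle=\sum_\gamma\int_K \partial^\gamma\varphi\;h^{-|\gamma|}(\gamma!)^{-\sigma}\,d\nu_\gamma$; integrating by parts and putting $\mu_\gamma=h^{-|\gamma|}(\gamma!)^{-\sigma}\nu_\gamma$ gives $T=\sum_\gamma(-1)^{|\gamma|}\partial^\gamma\mu_\gamma$ with $\|\mu_\gamma\|\le C_h\,h^{-|\gamma|}(\gamma!)^{-\sigma}$. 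Because the estimate holds for \emph{every} $h$, the family can be arranged so that $\|\mu_\gamma\|\le c_{\tilde h}\,\tilde h^{|\gamma|}(\gamma!)^{-\sigma}$ for every $\tilde h>0$, which is exactly the Beurling-type decay demanded of the coefficients of a $\sigma$-ultradifferential operator.

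The hard part is precisely this uniformity in $h$. A single $h$ only represents $T$ as a functional on the $h$-th Banach step $\mathcal E^{\sigma,h}(K)$, and such a representation does not pair convergently with all of $E^\sigma=\varinjlim_h\mathcal E^{\sigma,h}$; one genuinely needs \emph{one} family $(\mu_\gamma)$ valid simultaneously on every step. I would obtain it by exploiting the regularity of the (DFS) inductive limit defining the Roumieu topology: choosing $h_n\nearrow\infty$, combining the step-$h_n$ representations by a diagonal construction that assigns the high-order tail $|\gamma|\ge N_n$ to the $h_n$-estimate, and checking that the single resulting family inherits the decay for all $\tilde h$. Alternatively one can bypass the gluing through the Paley--Wiener description of $\widehat T$, dividing $\widehat T$ by the symbol of a fixed $\sigma$-ultradifferential operator of order $1/\sigma$ to obtain a bounded continuous $g$ with $T=P(D)g$ directly; this yields the Beurling decay for free, at the cost of having then to relocalize $g$ to $K$ via the support-nonincreasing property of $P(D)$.

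\emph{Finally (from measures to continuous functions):} I would trade each measure for a continuous function by one iterated primitive. Writing $\mathbf 1=(1,\dots,1)$ and using $\partial_{x_j}^2(x_j-y_j)_+=\delta(x_j-y_j)$, the function $g_\gamma(x)=\int_K\prod_{j}(x_j-y_j)_+\,d\mu_\gamma(y)$ is continuous, bounded on $K$ by $(\mathrm{diam}\,K)^m\|\mu_\gamma\|$, and satisfies $\partial^{2\mathbf 1}g_\gamma=\mu_\gamma$. Substituting and reindexing $\delta=\gamma+2\mathbf 1$ turns $T=\sum_\gamma(-1)^{|\gamma|}\partial^{\gamma+2\mathbf 1}g_\gamma$ into $\sum_\delta a_\delta D^\delta f_\delta$, where $a_\delta$ absorbs the unimodular constants and the size $\|g_{\delta-2\mathbf 1}\|_{\infty,K}$ while $f_\delta=g_{\delta-2\mathbf 1}/a_\delta$, so that $\sup_{\delta,\,x\in K}|f_\delta(x)|\le M$ with $M=1$. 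Since $\delta!/(\delta-2\mathbf 1)!=\prod_j\delta_j(\delta_j-1)\le C^{|\delta|}$, the Beurling decay of $\|\mu_\gamma\|$ transfers to $|a_\delta|\le c_h h^{|\delta|}(\delta!)^{-\sigma}$ for every $h$, so $P(D)=\sum_\delta a_\delta D^\delta$ is a genuine $\sigma$-ultradifferential operator and $T=\sum_\delta a_\delta D^\delta f_\delta$, as required.
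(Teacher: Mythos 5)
Note first that the paper itself does not prove this proposition: it is recalled at the beginning of Section 4 from the literature (\cite{Kom}, \cite{LM}, \cite{Rod}), so your attempt can only be measured against the classical argument of Komatsu. Your skeleton is in fact exactly that argument --- Roumieu continuity on each Banach step ($\forall h>0\,\exists C_h$), the embedding $\varphi\mapsto\bigl(\partial^{\gamma}\varphi/(h^{|\gamma|}(\gamma!)^{\sigma})\bigr)_{\gamma}$ into $c_{0}\bigl(\mathbb{Z}_{+}^{m},C(K)\bigr)$, Hahn--Banach plus the duality $c_{0}(\mathbb{Z}_{+}^{m},C(K))'=\ell^{1}(\mathbb{Z}_{+}^{m},\mathcal{M}(K))$, and the iterated primitive $\partial^{2\mathbf{1}}g_{\gamma}=\mu_{\gamma}$ to trade measures for uniformly bounded continuous functions --- and, to your credit, you locate the true difficulty exactly where it is: one value of $h$ produces a family of measures that pairs only with the steps $h'<h$, whereas the coefficients must satisfy the Beurling-type decay (\ref{12}) for \emph{every} $h$, since only then does the series pair with all of $E^{\sigma}(\Omega)=\varinjlim_{h}\mathcal{E}^{\sigma,h}$.

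However, neither of your two proposed resolutions of that difficulty survives scrutiny, and this is a genuine gap. The diagonal splicing fails for a structural reason: if $(\mu^{(n)}_{\gamma})_{\gamma}$ denotes the step-$h_{n}$ family and you set $\nu_{\gamma}=\mu^{(n)}_{\gamma}$ for $N_{n}\leq|\gamma|<N_{n+1}$, the hybrid family does inherit the decay, but nothing forces $\sum_{\gamma}(-1)^{|\gamma|}\partial^{\gamma}\nu_{\gamma}=T$; each full family represents $T$, the differences $\mu^{(n)}-\mu^{(n')}$ represent $0$ without their blocks vanishing individually, and the mismatched partial sums are uncontrolled. The classical repair, which is the one missing idea, is the projective description of the Roumieu topology (Komatsu's lemma on weight sequences, valid since $p!^{\sigma}$ satisfies (M.1)--(M.2)): a functional bounded on every step is dominated by a \emph{single} seminorm $\sup_{\gamma,\,x\in K}|\partial^{\gamma}\varphi(x)|\,\bigl((\gamma!)^{\sigma}\prod_{j\leq|\gamma|}r_{j}\bigr)^{-1}$ for some sequence $r_{j}\nearrow\infty$; one application of your Hahn--Banach/Riesz step with this weight gives $\|\mu_{\gamma}\|\leq C\bigl((\gamma!)^{\sigma}\prod_{j\leq|\gamma|}r_{j}\bigr)^{-1}$, and $r_{j}\to\infty$ turns this single family into $\|\mu_{\gamma}\|\leq c_{h}h^{|\gamma|}(\gamma!)^{-\sigma}$ for all $h$ simultaneously. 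Your Fourier alternative hides the same issue and adds another: in the paper's sense (\ref{12}) an admissible $P$ has symbol $O\bigl(e^{\epsilon|\xi|^{1/\sigma}}\bigr)$ for every $\epsilon>0$, while Paley--Wiener for $T\in E_{\sigma}'$ gives only $|\widehat{T}(\xi)|\leq C_{\epsilon}e^{\epsilon|\xi|^{1/\sigma}}$ for every $\epsilon$; hence no \emph{fixed} admissible $P$ dominates $\widehat{T}$, and $P$ must be constructed adapted to the constants $C_{\epsilon}$ of the given $T$ (again via a weight sequence, e.g.\ an infinite product with zeros at $r_{j}=j^{\sigma}\rho_{j}$, $\rho_{j}\to\infty$ slowly); a Roumieu-class $P$ with $|P(\xi)|\geq ce^{\epsilon_{0}|\xi|^{1/\sigma}}$ would be universal but is not of class (\ref{12}). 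Moreover ``relocalizing $g$ via the support-nonincreasing property of $P(D)$'' is backwards: locality bounds $\operatorname{supp}P(D)(\chi g)$, not $\operatorname{supp}g$, and cutting off produces $P(D)(\chi g)=T+P(D)((\chi-1)g)$, whose correction must be removed by a parametrix argument. Two smaller imprecisions you share with the loosely stated proposition itself: a Gevrey cut-off reduces the continuity sup to a compact neighborhood of $K$, not to $K$; and your primitives $g_{\gamma}(x)=\int\prod_{j}(x_{j}-y_{j})_{+}\,d\mu_{\gamma}(y)$ are not supported in $K$, so the conclusion $f_{\gamma}\in C_{0}(K)$ needs a further standard adjustment.
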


The space $\mathcal{S}^{\left( \sigma \right) }\left( \mathbb{R}^{m}\right)
,\sigma >1,$ see \cite{Gel}, is the space of functions $\varphi \in
C^{\infty }\left( \mathbb{R}^{m}\right) $ such that $\forall b>0,$ we have
\begin{equation}
\left\| \varphi \right\| _{b,\sigma }=\sup_{\alpha ,\beta \in \mathbb{Z}%
_{+}^{m}}\int \frac{\left| x\right| ^{\left| \beta \right| }}{b^{\left|
\alpha +\beta \right| }\alpha !^{\sigma }\beta !^{\sigma }}\left| \partial
^{\alpha }\varphi \left( x\right) \right| dx<\infty  \label{2-1}
\end{equation}

\begin{lemma}
\label{lem3}There exists $\phi \in \mathcal{S}^{\left( \sigma \right)
}\left( \mathbb{R}^{m}\right) $ satisfying
\begin{equation*}
\int \phi \left( x\right) dx=1\text{ and }\int x^{\alpha }\phi \left(
x\right) dx=0,\forall \alpha \in \mathbb{Z}_{+}^{m}\backslash \{0\}
\end{equation*}
\end{lemma}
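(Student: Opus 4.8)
The plan is to work on the Fourier side, where the moment conditions become conditions of flatness at the origin. With the convention $\widehat{\phi}(\xi)=\int \phi(x)e^{-ix\cdot \xi}\,dx$ one has $\partial_\xi^\alpha \widehat{\phi}(0)=(-i)^{|\alpha|}\int x^\alpha \phi(x)\,dx$, so the two requirements $\int \phi\,dx=1$ and $\int x^\alpha\phi\,dx=0$ for $\alpha\neq 0$ are exactly $\widehat{\phi}(0)=1$ and $\partial^\alpha \widehat{\phi}(0)=0$ for all $\alpha\in\mathbb{Z}_+^m\setminus\{0\}$. Since the norm $(\ref{2-1})$ is symmetric in the differentiation index $\alpha$ and the weight index $\beta$ (both carrying the exponent $\sigma$ and the same constant $b$), and since the Fourier transform interchanges multiplication by $x$ with differentiation, the space $\mathcal{S}^{(\sigma)}(\mathbb{R}^m)$ is invariant under the Fourier transform; this is a standard property of these symmetric Gelfand--Shilov spaces, see \cite{Gel}. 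Hence it suffices to produce $g\in \mathcal{S}^{(\sigma)}(\mathbb{R}^m)$ with $g(0)=1$ and $\partial^\alpha g(0)=0$ for every $\alpha\neq 0$, and then to set $\phi:=\mathcal{F}^{-1}g$.

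To build $g$ I would use a compactly supported Gevrey cutoff. Because $\sigma>1$, the Gevrey class of order $\sigma$ is non-quasianalytic, so there exists $\chi\in C^\infty(\mathbb{R}^m)$ with compact support, identically equal to $1$ on a neighborhood of the origin, and satisfying a Gevrey estimate $\sup_x|\partial^\alpha\chi(x)|\le C_b\, b^{|\alpha|}(\alpha!)^\sigma$ for every $b>0$, see \cite{Kom} and \cite{Rod}. For such $\chi$ the spatial weight in $(\ref{2-1})$ is harmless: on the support, of radius $R$ say, $|x|^{|\beta|}\le R^{|\beta|}$, so each integral in $(\ref{2-1})$ is dominated by $|\mathrm{supp}\,\chi|\,C_b\,R^{|\beta|}b^{|\alpha|}(\alpha!)^\sigma$; dividing by $b^{|\alpha+\beta|}(\alpha!)^\sigma(\beta!)^\sigma$ cancels the $\alpha$-dependence and leaves $|\mathrm{supp}\,\chi|\,C_b\,(R/b)^{|\beta|}(\beta!)^{-\sigma}$, whose supremum over $\alpha,\beta$ is finite for every $b>0$. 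Thus $\chi\in\mathcal{S}^{(\sigma)}(\mathbb{R}^m)$, while $\chi\equiv 1$ near $0$ gives $\chi(0)=1$ and $\partial^\alpha\chi(0)=0$ for all $\alpha\neq 0$, so $g:=\chi$ has exactly the required properties.

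Setting $\phi:=\mathcal{F}^{-1}\chi$, Fourier invariance gives $\phi\in\mathcal{S}^{(\sigma)}(\mathbb{R}^m)$, and $\partial^\alpha\widehat{\phi}(0)=\partial^\alpha\chi(0)$ translates back, through the formula above, into $\int\phi\,dx=1$ and $\int x^\alpha\phi\,dx=0$ for $\alpha\neq 0$; if a real-valued $\phi$ is desired, one takes $\chi$ even, which makes $\mathcal{F}^{-1}\chi$ real. The two points carrying the real content, and where I would be most careful, are the Fourier invariance of the $L^1$-type Beurling space $\mathcal{S}^{(\sigma)}$ (where the interchange of the roles of $\alpha$ and $\beta$ must be justified together with the integrability of the weighted derivatives of $\widehat{\phi}$) and the existence of a compactly supported cutoff $\chi$ lying in the Beurling class, i.e. obeying the derivative bound for \emph{every} $b>0$; both rest on the non-quasianalyticity of Gevrey classes of order $\sigma>1$ and are classical, see \cite{Gel} and \cite{Kom}.
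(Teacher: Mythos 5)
Your proposal is correct and follows essentially the same route as the paper, whose entire proof is to take the Fourier transform of a function in the projective Gevrey class $D^{(\sigma)}(\mathbb{R}^{m})$ equal to $1$ in a neighborhood of the origin. You simply supply the details the paper leaves implicit (translating the moment conditions into flatness of $\widehat{\phi}$ at $0$, checking the cutoff lies in $\mathcal{S}^{(\sigma)}$, and invoking the Fourier invariance of this Gelfand--Shilov class, cf. \cite{Gel}), with the inverse transform used instead of the direct one, which only tidies the normalization.
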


\begin{proof}
For an example of function $\phi \in \mathcal{S}^{\left( \sigma \right) }$
satisfying these conditions, take the\ Fourier transform of a function of
the class $D^{\left( \sigma \right) }\left( \mathbb{R}^{m}\right) $ equal $1$
in a neighborhood of the origin. Here $D^{\left( \sigma \right) }\left(
\mathbb{R}^{m}\right) $ denotes the projective Gevrey space of order $\sigma
,$ i.e. $D^{\left( \sigma \right) }\left( \mathbb{R}^{m}\right) =E^{(\sigma
)}\left( \mathbb{R}^{m}\right) \cap C_{0}^{\infty }\left( \mathbb{R}%
^{m}\right) $, where $f\in E^{(\sigma )}\left( \mathbb{R}^{m}\right) ,$\ if $%
f\in C^{\infty }\left( \mathbb{R}^{m}\right) $ and for every compact subset $%
K$ of $\mathbb{R}^{m},\forall b>0,\exists c>0,\forall \alpha \in \mathbb{Z}%
_{+}^{m},$%
\begin{equation}
\sup_{x\in K}\left| \partial ^{\alpha }f\left( x\right) \right| \leq
cb^{\left| \alpha \right| }\left( \alpha !\right) ^{\sigma }
\end{equation}
\end{proof}

\begin{definition}
The net $\phi _{\varepsilon }=\varepsilon ^{-m}\phi \left( ./\varepsilon
\right) ,\varepsilon \in \left] 0,1\right] ,$ where $\phi $\ satisfies the
conditions of lemma \ref{lem3}, is called a net of mollifiers.
\end{definition}

The space $E^{\sigma }\left( \Omega \right) $ is embedded into $\mathcal{G}%
^{\sigma }\left( \Omega \right) $ by the standard canonical injection
\begin{equation}
\begin{array}{llll}
I: & E^{\sigma }\left( \Omega \right) & \rightarrow & \mathcal{G}^{\sigma
}\left( \Omega \right) \\
& f & \mapsto & \left[ f\right] =cl\left( f_{\varepsilon }\right) ,%
\end{array}%
\end{equation}%
where $f_{\varepsilon }=f$ , $\forall \varepsilon \in \left] 0,1\right] $.

The following proposition gives the natural embedding of Gevrey
ultradistributions into $\mathcal{G}^{\sigma }\left( \Omega \right) .$

\begin{theorem}
\label{pro-inj}The map
\begin{equation}
\begin{array}{llll}
J_{0}: & E_{3\sigma -1}^{\prime }\left( \Omega \right) & \rightarrow &
\mathcal{G}^{\sigma }\left( \Omega \right) \\
& T & \mapsto & \left[ T\right] =cl\left( \left( T\ast \phi _{\varepsilon
}\right) _{/\Omega }\right) _{\varepsilon }%
\end{array}
\label{mapJ0}
\end{equation}%
is an embedding$.$
\end{theorem}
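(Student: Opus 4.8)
The map $J_0$ is visibly linear, so the proposition reduces to two assertions: that $\left(\left(T\ast\phi_\varepsilon\right)_{/\Omega}\right)_\varepsilon$ really belongs to $\mathcal{E}_m^\sigma(\Omega)$, and that $J_0$ is injective. I note first that $3\sigma-1\geq 2>1$ whenever $\sigma\geq 1$, so that $E_{3\sigma-1}^{\prime}(\Omega)$, the test space $D^{3\sigma-1}(\Omega)$ and the structure theorem stated above are all available. The plan is to treat moderateness as the substantial point and injectivity as a routine mollifier argument.

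To prove moderateness I would invoke the structure theorem for $T\in E_{3\sigma-1}^{\prime}(\Omega)$ with $\operatorname{supp}T\subset K$, writing $T=\sum_{\gamma}a_\gamma D^\gamma f_\gamma$ with $\left|a_\gamma\right|\leq c_h\,h^{|\gamma|}/(\gamma!)^{3\sigma-1}$ for every $h>0$, and $f_\gamma\in C_0(K)$ with $\sup_\gamma\|f_\gamma\|_\infty\leq M$. Since $\phi_\varepsilon$ is smooth, for any $\beta$ one has $\partial^\beta(T\ast\phi_\varepsilon)=\sum_\gamma a_\gamma\,f_\gamma\ast\partial^{\beta+\gamma}\phi_\varepsilon$, whence, after the change of variables $z=(x-y)/\varepsilon$,
\begin{equation*}
\left|\partial^\beta(T\ast\phi_\varepsilon)(x)\right|\leq M\sum_\gamma\left|a_\gamma\right|\varepsilon^{-|\beta|-|\gamma|}\left\|\partial^{\beta+\gamma}\phi\right\|_{L^1}.
\end{equation*}
Taking the order $\beta=0$ in the seminorm \eqref{2-1} furnishes $\|\partial^\delta\phi\|_{L^1}\leq\|\phi\|_{b,\sigma}\,b^{|\delta|}(\delta!)^\sigma$ for every $b>0$; combining this with the bound on $a_\gamma$ and with $(\beta+\gamma)!\leq 2^{|\beta+\gamma|}\beta!\,\gamma!$, the $\beta$-dependent factors pull out as a fixed constant times a power of $\varepsilon^{-1}$ (dominated by the exponential below), and I am left with the series $\sum_\gamma\bigl(hb2^\sigma\varepsilon^{-1}\bigr)^{|\gamma|}/(\gamma!)^{2\sigma-1}$.

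The decisive observation is the arithmetic $\sigma-(3\sigma-1)=-(2\sigma-1)$: the Gevrey order $\sigma$ of the mollifier played against the ultradistribution order $3\sigma-1$ leaves exactly the exponent $2\sigma-1$ in the denominator. I would then use the elementary estimate $\sum_{n\geq 0}t^n/(n!)^{s}\leq C\exp\!\bigl(s\,t^{1/s}\bigr)$, valid for $s=2\sigma-1\geq 1$ and applied coordinatewise in the $m$ variables, with $t=hb2^\sigma\varepsilon^{-1}$. Since $t^{1/(2\sigma-1)}$ is a constant multiple of $\varepsilon^{-1/(2\sigma-1)}$, this yields $\left|\partial^\beta(T\ast\phi_\varepsilon)(x)\right|\leq c(\beta)\exp\!\bigl(k\varepsilon^{-1/(2\sigma-1)}\bigr)$ uniformly on $K$, which is precisely the moderateness estimate \eqref{1*1}. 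This series bound, together with the correct bookkeeping of the order loss $3\sigma-1$, is the main obstacle of the proof.

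For injectivity, suppose $J_0(T)=0$, i.e. $\left(\left(T\ast\phi_\varepsilon\right)_{/\Omega}\right)_\varepsilon\in\mathcal{N}^\sigma(\Omega)$; in particular $\sup_K\left|T\ast\phi_\varepsilon\right|\to 0$ on every compact. On the other hand, because $\int\phi=1$ and $\phi$ decays rapidly, $\phi_\varepsilon$ is a net of mollifiers and $T\ast\phi_\varepsilon\to T$ in $D_{3\sigma-1}^{\prime}(\Omega)$; hence for every $\psi\in D^{3\sigma-1}(\Omega)$,
\begin{equation*}
\langle T,\psi\rangle=\lim_{\varepsilon\to 0}\int\left(T\ast\phi_\varepsilon\right)(x)\psi(x)\,dx=0,
\end{equation*}
the last equality holding because the integrand tends to $0$ uniformly on $\operatorname{supp}\psi$. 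Finally, choosing a Gevrey cut-off $\chi\in D^{3\sigma-1}(\Omega)$ with $\chi\equiv 1$ near $\operatorname{supp}T$ and writing $\langle T,\psi\rangle=\langle T,\chi\psi\rangle$ for arbitrary $\psi\in E^{3\sigma-1}(\Omega)$, one concludes $T=0$ in $E_{3\sigma-1}^{\prime}(\Omega)$. The only point to watch here is the convergence of the mollification in the ultradistribution topology, which is standard for the Gevrey class once $\sigma\geq 1$.
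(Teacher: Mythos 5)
Your proposal is correct and follows essentially the same route as the paper: moderateness via the structure theorem $T=\sum_{\gamma}a_{\gamma}D^{\gamma}f_{\gamma}$, the seminorm $\left\Vert \phi \right\Vert _{b,\sigma }$, the factorial inequality $\left( \beta +\gamma \right) !^{t}\leq 2^{t\left\vert \beta +\gamma \right\vert }\beta !^{t}\gamma !^{t}$, and the series bound $\sum_{n}t^{n}/\left( n!\right) ^{2\sigma -1}\leq \exp \left( \left( 2\sigma -1\right) t^{1/\left( 2\sigma -1\right) }\right) $ with $t\sim \varepsilon ^{-1}$, which is exactly how the paper obtains $k_{1}=\left( 2\sigma -1\right) \left( 2^{3\sigma }bh\right) ^{1/\left( 2\sigma -1\right) }$; your injectivity argument (cut-off $\chi \in D^{3\sigma -1}$, weak convergence of $T\ast \phi _{\varepsilon }$, and the $\mathcal{N}^{\sigma }$-estimate forcing $\left\langle T,\psi \right\rangle =0$) is the paper's verbatim. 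The only differences are cosmetic bookkeeping, e.g.\ you pull the factor $\varepsilon ^{-\left\vert \beta \right\vert }$ out and absorb it into the exponential separately rather than keeping $\varepsilon ^{-\left\vert \gamma +\alpha \right\vert }$ inside the summed series.
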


\begin{proof}
Let $T\in E_{3\sigma -1}^{\prime }\left( \Omega \right) $ with $suppT\subset
K,$ then there exists an $\left( 3\sigma -1\right) $-ultradifferential
operator $P\left( D\right) =\sum\limits_{\gamma \in \mathbb{Z}%
_{+}^{m}}a_{\gamma }D^{\gamma }$ and continuous functions $f_{\gamma }$ with
$suppf_{\gamma }\subset K,\forall \gamma \in \mathbb{Z}_{+}^{m},$ and $%
\sup\limits_{\gamma \in \mathbb{Z}_{+}^{m},x\in K}\left| f_{\gamma }\left(
x\right) \right| \leq M,$ such that
\begin{equation*}
T=\sum_{\gamma \in \mathbb{Z}_{+}^{m}}a_{\gamma }D^{\gamma }f_{\gamma }
\end{equation*}
We have
\begin{equation*}
T\ast \phi _{\varepsilon }\left( x\right) =\sum_{\gamma \in \mathbb{Z}%
_{+}^{m}}a_{\gamma }\frac{\left( -1\right) ^{\left| \gamma \right| }}{%
\varepsilon ^{\left| \gamma \right| }}\int f_{\gamma }\left( x+\varepsilon
y\right) D^{\gamma }\phi \left( y\right) dy
\end{equation*}
Let $\alpha \in \mathbb{Z}_{+}^{m},$ then
\begin{equation*}
\left| \partial ^{\alpha }\left( T\ast \phi _{\varepsilon }\left( x\right)
\right) \right| \leq \sum_{\gamma \in \mathbb{Z}_{+}^{m}}a_{\gamma }\frac{1}{%
\varepsilon ^{\left| \gamma +\alpha \right| }}\int \left| f_{\gamma }\left(
x+\varepsilon y\right) \right| \left| D^{\gamma +\alpha }\phi \left(
y\right) \right| dy
\end{equation*}
From (\ref{12}) and the inequality
\begin{equation}
\left( \beta +\alpha \right) !^{t}\leq 2^{t\left| \beta +\alpha \right|
}\alpha !^{t}\beta !^{t},\text{ }\forall t\geq 1,  \label{6}
\end{equation}
we have, $\forall h>0,\exists c>0,$ such that
\begin{eqnarray*}
\left| \partial ^{\alpha }\left( T\ast \phi _{\varepsilon }\left( x\right)
\right) \right| \leq \sum_{\gamma \in \mathbb{Z}_{+}^{m}}c\frac{h^{\left|
\gamma \right| }}{\gamma !^{3\sigma -1}}\frac{1}{\varepsilon ^{\left| \gamma
+\alpha \right| }}\int \left| f_{\gamma }\left( x+\varepsilon y\right)
\right| \left| D^{\gamma +\alpha }\phi \left( y\right) \right| dy \\
\leq \sum_{\gamma \in \mathbb{Z}_{+}^{m}}c\alpha !^{3\sigma -1}\frac{%
2^{\left( 3\sigma -1\right) \left| \gamma +\alpha \right| }h^{\left| \gamma
\right| }}{\left( \gamma +\alpha \right) !^{2\sigma -1}}\frac{1}{\varepsilon
^{\left| \gamma +\alpha \right| }}b^{\left| \gamma +\alpha \right| }\times \\
\qquad \times \int \left| f_{\gamma }\left( x+\varepsilon y\right) \right|
\frac{\left| D^{\gamma +\alpha }\phi \left( y\right) \right| }{b^{\left|
\gamma +\alpha \right| }\left( \gamma +\alpha \right) !^{\sigma }}dy,
\end{eqnarray*}
then for $h>\frac{1}{2},$
\begin{eqnarray*}
\frac{1}{\alpha !^{3\sigma -1}}\left| \partial ^{\alpha }\left( T\ast \phi
_{\varepsilon }\left( x\right) \right) \right| \leq \left\| \phi \right\|
_{b,\sigma }Mc\sum_{\gamma \in \mathbb{Z}_{+}^{m}}2^{-\left| \gamma \right| }%
\frac{\left( 2^{3\sigma }bh\right) ^{\left| \gamma +\alpha \right| }}{\left(
\gamma +\alpha \right) !^{2\sigma -1}}\frac{1}{\varepsilon ^{\left| \gamma
+\alpha \right| }} \\
\leq c\exp \left( k_{1}\varepsilon ^{-\frac{1}{2\sigma -1}}\right) ,
\end{eqnarray*}
i.e.
\begin{equation}
\left| \partial ^{\alpha }\left( T\ast \phi _{\varepsilon }\left( x\right)
\right) \right| \leq c\left( \alpha \right) \exp \left( k_{1}\varepsilon ^{-%
\frac{1}{2\sigma -1}}\right) ,  \label{1-2}
\end{equation}
where $k_{1}=\left( 2\sigma -1\right) \left( 2^{3\sigma }bh\right) ^{\frac{1%
}{2\sigma -1}}$.

Suppose that $\left( T\ast \phi _{\varepsilon }\right) _{\varepsilon }\in
\mathcal{N}^{\sigma }\left( \Omega \right) ,$ then for every compact $L$ of $%
\Omega ,$ $\exists c>0,$ $\forall k>0,$ $\exists \varepsilon _{0}\in \left]
0,1\right] ,$%
\begin{equation}
\left| T\ast \phi _{\varepsilon }\left( x\right) \right| \leq c\exp \left(
-k\varepsilon ^{-\frac{1}{2\sigma -1}}\right) ,\forall x\in L,\forall
\varepsilon \leq \varepsilon _{0}  \label{7}
\end{equation}
Let $\chi \in D^{3\sigma -1}\left( \Omega \right) $ and $\chi =1$ in a
neighborhood of $K$, then $\forall \psi \in E^{3\sigma -1}\left( \Omega
\right) ,$%
\begin{equation*}
\left\langle T,\psi \right\rangle =\left\langle T,\chi \psi \right\rangle
=\lim_{\varepsilon \rightarrow 0}\int \left( T\ast \phi _{\varepsilon
}\right) \left( x\right) \chi \left( x\right) \psi \left( x\right) dx
\end{equation*}
Consequently, from (\ref{7}), we obtain
\begin{equation*}
\left| \int \left( T\ast \phi _{\varepsilon }\right) \left( x\right) \chi
\left( x\right) \psi \left( x\right) dx\right| \leq c\exp \left(
-k\varepsilon ^{-\frac{1}{2\sigma -1}}\right) ,\forall \varepsilon \leq
\varepsilon _{0},
\end{equation*}
which gives $\left\langle T,\psi \right\rangle =0$
\end{proof}

\begin{remark}
We have $c\left( \alpha \right) =\alpha !^{3\sigma -1}\left\| \phi \right\|
_{b,\sigma }Mc$ in (\ref{1-2}).
\end{remark}

In order to show the commutativity of the following diagram of embeddings
\begin{equation*}
\begin{array}{ccc}
D^{\sigma }\left( \Omega \right) & \rightarrow & \mathcal{G}^{\sigma }\left(
\Omega \right) \\
& \searrow & \uparrow \\
&  & E_{3\sigma -1}^{\prime }\left( \Omega \right)%
\end{array}%
,
\end{equation*}%
we have to prove the following fundamental result.

\begin{proposition}
\label{pro2}Let $f\in D^{\sigma }\left( \Omega \right) $ and $\left( \phi
_{\varepsilon }\right) _{\varepsilon }$ be a net of mollifiers, then
\begin{equation*}
\left( f-\left( f\ast \phi _{\varepsilon }\right) _{/\Omega }\right)
_{\varepsilon }\in \mathcal{N}^{\sigma }\left( \Omega \right)
\end{equation*}
\end{proposition}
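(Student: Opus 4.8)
The plan is to reduce the claim to a zeroth-order estimate and then to exploit the vanishing moments of the mollifier together with the Gevrey bounds on $f$. First I would observe that $(f-(f\ast\phi_\varepsilon)_{/\Omega})_\varepsilon$ is moderate: the constant net $f$ lies in $\mathcal{E}_m^\sigma(\Omega)$ trivially, while $\partial^\alpha(f\ast\phi_\varepsilon)=(\partial^\alpha f)\ast\phi_\varepsilon$ is bounded by $\|\partial^\alpha f\|_\infty\|\phi\|_{L^1}$ uniformly in $\varepsilon$. Hence, by the null characterization of Proposition \ref{carN}, it suffices to prove $\left(\ref{1*6}\right)$ for $\alpha=0$, i.e. to bound $\sup_{x\in K}|f(x)-(f\ast\phi_\varepsilon)(x)|$. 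Writing $(f\ast\phi_\varepsilon)(x)=\int f(x-\varepsilon z)\phi(z)\,dz$ after the substitution $y=\varepsilon z$, and using $\int\phi(z)\,dz=1$ from Lemma \ref{lem3}, I would represent the difference as
\begin{equation*}
f(x)-(f\ast\phi_\varepsilon)(x)=\int\left[f(x)-f(x-\varepsilon z)\right]\phi(z)\,dz .
\end{equation*}

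Next I would Taylor expand $f(x-\varepsilon z)$ to order $N$ about $x$. The crucial point is that the remaining moment conditions $\int z^\alpha\phi(z)\,dz=0$ for $1\le|\alpha|\le N-1$ annihilate every term of the polynomial part, leaving only the $N$-th order remainder
\begin{equation*}
f(x)-(f\ast\phi_\varepsilon)(x)=-N\sum_{|\alpha|=N}\frac{(-\varepsilon)^N}{\alpha!}\int z^\alpha\left(\int_0^1(1-t)^{N-1}\partial^\alpha f(x-t\varepsilon z)\,dt\right)\phi(z)\,dz .
\end{equation*}
Since $f\in D^\sigma(\Omega)$ is compactly supported, its extension by zero satisfies a global Gevrey bound $\sup|\partial^\alpha f|\le C^{|\alpha|+1}(\alpha!)^\sigma$. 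Estimating $|z^\alpha|\le|z|^{N}$, using $\int_0^1(1-t)^{N-1}dt=1/N$, bounding $\int|z|^N|\phi(z)|\,dz\le\|\phi\|_{b,\sigma}\,b^{N}(N!)^\sigma$ from the definition $\left(\ref{2-1}\right)$ of the $\mathcal{S}^{(\sigma)}$-seminorm (with the first index $0$), together with $(\alpha!)^{\sigma-1}\le(N!)^{\sigma-1}$ for $|\alpha|=N$, I would arrive at a bound of the form
\begin{equation*}
\sup_{x\in K}\left|f(x)-(f\ast\phi_\varepsilon)(x)\right|\le A(b)\,(C'b\varepsilon)^{N}(N!)^{2\sigma-1},
\end{equation*}
valid for every positive integer $N$, where $A(b)$ absorbs $\|\phi\|_{b,\sigma}$ and the subexponential count of multi-indices with $|\alpha|=N$, and $C'$ is independent of $N,\varepsilon,b$. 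The doubling of the Gevrey exponent, which produces $(N!)^{2\sigma-1}$ and is the source of the exponent $\tfrac{1}{2\sigma-1}$, arises precisely from combining the Gevrey growth of $\partial^\alpha f$ with the moment growth of $\phi$.

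Finally I would optimize the free integer $N$ against $\varepsilon$. With $\mu=2\sigma-1$ and $N!\sim(N/e)^N$, minimizing $(C'b\varepsilon)^N(N!)^\mu$ leads to the choice $N\approx(C'b\varepsilon)^{-1/\mu}\approx\varepsilon^{-1/(2\sigma-1)}$ and to the bound $\exp\!\left(-(2\sigma-1)(C'b)^{-1/(2\sigma-1)}\varepsilon^{-1/(2\sigma-1)}\right)$. I expect this optimization, and in particular the bookkeeping needed to turn the factorial–power estimate into a clean exponential one, to be the main technical obstacle. The decisive observation is that $\phi\in\mathcal{S}^{(\sigma)}$ makes $\|\phi\|_{b,\sigma}$ finite for \emph{every} $b>0$: given an arbitrary $k>0$ demanded by the definition of $\mathcal{N}^\sigma(\Omega)$, one first chooses $b=b(k)$ small enough that $(2\sigma-1)(C'b)^{-1/(2\sigma-1)}\ge k$, and then the finite constant $A(b(k))$ plays the role of the constant $c=c(k)$ in $\left(\ref{1*6}\right)$; rounding $N$ to the nearest integer costs only harmless constant factors. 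This yields $\left(\ref{1*6}\right)$ for all $k>0$, and Proposition \ref{carN} then gives $(f-(f\ast\phi_\varepsilon)_{/\Omega})_\varepsilon\in\mathcal{N}^\sigma(\Omega)$.
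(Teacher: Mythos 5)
Your proof is correct, and its analytic heart --- Taylor expansion to an arbitrary order $N$, annihilation of the polynomial part by the vanishing moments of $\phi$, the estimate $\int \left\vert z\right\vert ^{N}\left\vert \phi \left( z\right) \right\vert dz\leq \left\Vert \phi \right\Vert _{b,\sigma }b^{N}N!^{\sigma }$ drawn from (\ref{2-1}), the resulting factor $\left( N!\right) ^{2\sigma -1}$, and the optimization $N\approx \varepsilon ^{-1/\left( 2\sigma -1\right) }$ with $b$ chosen small so that the exponential rate exceeds any prescribed $k$ --- is exactly the paper's argument. The genuine difference is organizational: you first verify moderateness of the difference net and then invoke the null characterization (Proposition \ref{carN}) to reduce everything to the zeroth-order estimate, whereas the paper runs the Taylor/moment estimate for all derivatives $\partial ^{\alpha }$ simultaneously, tracking the $\alpha $-dependence through inequality (\ref{6}) so that its final bound (\ref{1*5}) has the uniform shape $c^{\left\vert \alpha \right\vert +1}\alpha !^{\sigma }\exp \left( -k\varepsilon ^{-1/\left( 2\sigma -1\right) }\right) $ with $c$ and $\varepsilon _{0}$ independent of $\alpha $. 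Your reduction buys a shorter proof of the proposition as stated, and your Stirling-style minimization of $\left( C^{\prime }b\varepsilon \right) ^{N}\left( N!\right) ^{2\sigma -1}$ is cleaner than the paper's explicit selection of the integer $N\left( \varepsilon \right) $ in the interval (\ref{2-2}); the cost is that you do not obtain Corollary \ref{corol1}, the derivative-uniform estimate (\ref{1*3}), which the paper extracts as a by-product of its proof and then uses later (in the proofs that $\mathcal{G}^{\sigma ,\infty }\left( \Omega \right) \cap D_{3\sigma -1}^{\prime }\left( \Omega \right) =E^{\sigma }\left( \Omega \right) $ and that $WF_{g}^{\sigma }\left( T\right) =WF^{\sigma }\left( T\right) $). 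To recover that corollary along your lines you would have to rerun your zeroth-order computation with $\partial ^{\alpha }f$ in place of $f$ while keeping all constants and the threshold $\varepsilon _{0}$ uniform in $\alpha $, which is precisely the bookkeeping the paper's version performs.
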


\begin{proof}
Let $f\in D^{\sigma }\left( \Omega \right) ,$ then there exists a constant $%
c>0,$ such that
\begin{equation*}
\left\vert \partial ^{\alpha }f\left( x\right) \right\vert \leq
c^{\left\vert \alpha \right\vert +1}\alpha !^{\sigma },\forall \alpha \in
\mathbb{Z}_{+}^{m},\forall x\in \Omega
\end{equation*}%
Let $\alpha \in \mathbb{Z}_{+}^{m}$, the Taylor's formula and the properties
of $\phi _{\varepsilon }$ give
\begin{equation*}
\partial ^{\alpha }\left( f\ast \phi _{\varepsilon }-f\right) \left(
x\right) =\sum_{\left\vert \beta \right\vert =N}\int \frac{\left(
\varepsilon y\right) ^{\beta }}{\beta !}\partial ^{\alpha +\beta }f\left(
\xi \right) \phi \left( y\right) dy,
\end{equation*}%
where $x\leq \xi \leq x+\varepsilon y.$ Consequently, for $b>0$, we have
\begin{eqnarray*}
\left\vert \partial ^{\alpha }\left( f\ast \phi _{\varepsilon }-f\right)
\left( x\right) \right\vert &\leq &\varepsilon ^{N}\sum_{\left\vert \beta
\right\vert =N}\int \frac{\left\vert y\right\vert ^{N}}{\beta !}\left\vert
\partial ^{\alpha +\beta }f\left( \xi \right) \right\vert \left\vert \phi
\left( y\right) \right\vert dy \\
&\leq &\alpha !^{\sigma }\varepsilon ^{N}\sum_{\left\vert \beta \right\vert
=N}\beta !^{2\sigma -1}2^{\sigma \left\vert \alpha +\beta \right\vert
}b^{\left\vert \beta \right\vert }\int \frac{\left\vert \partial ^{\alpha
+\beta }f\left( \xi \right) \right\vert }{\left( \alpha +\beta \right)
!^{\sigma }}\times \\
&&\times \frac{\left\vert y\right\vert ^{\left\vert \beta \right\vert }}{%
b^{\left\vert \beta \right\vert }\beta !^{\sigma }}\left\vert \phi \left(
y\right) \right\vert dy
\end{eqnarray*}%
Let $k>0$ and $T>0,$ then
\begin{eqnarray*}
\left\vert \partial ^{\alpha }\left( f\ast \phi _{\varepsilon }-f\right)
\left( x\right) \right\vert &\leq &\alpha !^{\sigma }\left( \varepsilon
N^{2\sigma -1}\right) ^{N}\left( k^{2\sigma -1}T\right) ^{-N}\times \\
&&\times \sum_{\left\vert \beta \right\vert =N}\int 2^{\sigma \left\vert
\alpha +\beta \right\vert }\left( k^{2\sigma -1}bT\right) ^{\left\vert \beta
\right\vert }\frac{\left\vert \partial ^{\alpha +\beta }f\left( \xi \right)
\right\vert }{\left( \alpha +\beta \right) !^{\sigma }}\times \\
&&\times \frac{\left\vert y\right\vert ^{\left\vert \beta \right\vert }}{%
b^{\left\vert \beta \right\vert }\beta !^{\sigma }}\left\vert \phi \left(
y\right) \right\vert dy \\
&\leq &\alpha !^{\sigma }\left( \varepsilon N^{2\sigma -1}\right) ^{N}\left(
k^{2\sigma -1}T\right) ^{-N}\times \\
&&\times c\left\Vert \phi \right\Vert _{b,\sigma }\left( 2^{\sigma }c\right)
^{\left\vert \alpha \right\vert }\sum_{\left\vert \beta \right\vert
=N}\left( 2^{\sigma }k^{2\sigma -1}bT\right) ^{\left\vert \beta \right\vert
}c^{\left\vert \beta \right\vert },
\end{eqnarray*}%
hence, taking $2^{\sigma }k^{2\sigma -1}bTc\leq \frac{1}{2a},$ with $a>1$,
we obtain
\begin{eqnarray}
\left\vert \partial ^{\alpha }\left( f\ast \phi _{\varepsilon }-f\right)
\left( x\right) \right\vert &\leq &\alpha !^{\sigma }\left( \varepsilon
N^{2\sigma -1}\right) ^{N}\left( k^{2\sigma -1}T\right) ^{-N}\times  \notag
\\
&&\times c\left\Vert \phi \right\Vert _{b,\sigma }\left( 2^{\sigma }c\right)
^{\left\vert \alpha \right\vert }a^{-N}\sum_{\left\vert \beta \right\vert
=N}\left( \frac{1}{2}\right) ^{\left\vert \beta \right\vert }  \notag \\
&\leq &\left\Vert \phi \right\Vert _{b,\sigma }c^{\left\vert \alpha
\right\vert +1}\alpha !^{\sigma }\left( \varepsilon N^{2\sigma -1}\right)
^{N}\left( k^{2\sigma -1}T\right) ^{-N}a^{-N}  \label{1*5}
\end{eqnarray}%
Let $\varepsilon _{0}\in \left] 0,1\right] $ such that $\varepsilon _{0}^{%
\frac{1}{2\sigma -1}}\frac{\ln a}{k}<1$ and take $T>2^{2\sigma -1},$ then
\begin{equation*}
\left( T^{\frac{1}{2\sigma -1}}-1\right) >1>\frac{\ln a}{k}\varepsilon ^{%
\frac{1}{2\sigma -1}},\forall \varepsilon \leq \varepsilon _{0},
\end{equation*}%
in particular, we have
\begin{equation*}
\left( \frac{\ln a}{k}\varepsilon ^{\frac{1}{2\sigma -1}}\right) ^{-1}T^{%
\frac{1}{2\sigma -1}}-\left( \frac{\ln a}{k}\varepsilon ^{\frac{1}{2\sigma -1%
}}\right) ^{-1}>1
\end{equation*}%
Then, there exists $N=N\left( \varepsilon \right) \in \mathbb{Z}^{+},$ such
that
\begin{equation*}
\left( \frac{\ln a}{k}\varepsilon ^{\frac{1}{2\sigma -1}}\right)
^{-1}<N<\left( \frac{\ln a}{k}\varepsilon ^{\frac{1}{2\sigma -1}}\right)
^{-1}T^{\frac{1}{2\sigma -1}},
\end{equation*}%
i.e.
\begin{equation}
1\leq \frac{\ln a}{k}\varepsilon ^{\frac{1}{2\sigma -1}}N\leq T^{\frac{1}{%
2\sigma -1}},  \label{2-2}
\end{equation}%
which gives
\begin{equation*}
a^{-N}\leq \exp \left( -k\varepsilon ^{-\frac{1}{2\sigma -1}}\right) \text{
\ \ \ \ \ and\ \ \ \ \ }\frac{\varepsilon N^{2\sigma -1}}{k^{2\sigma -1}T}%
\leq \left( \frac{1}{\ln a}\right) ^{2\sigma -1}<1,
\end{equation*}%
if we choose $\ln a>1$. Finally, from (\ref{1*5}), we have
\begin{equation}
\left\vert \partial ^{\alpha }\left( f\ast \phi _{\varepsilon }-f\right)
\left( x\right) \right\vert \leq c\exp \left( -k\varepsilon ^{-\frac{1}{%
2\sigma -1}}\right) ,  \label{1*4}
\end{equation}%
i.e. $f\ast \phi _{\varepsilon }-f\in \mathcal{N}^{\sigma }\left( \Omega
\right) $
\end{proof}

From the proof, see (\ref{1*5}), we obtained in fact the following result. \

\begin{corollary}
\label{corol1}Let $f\in D^{\sigma }\left( \Omega \right) $, then for every
compact $K$ of $\Omega ,\forall k>0,\exists c>0,\exists \varepsilon _{0}\in %
\left] 0,1\right] ,\forall \alpha \in \mathbb{Z}_{+}^{m},\forall \varepsilon
\leq \varepsilon _{0}$ ,
\begin{equation}
\sup_{x\in K}\left\vert \partial ^{\alpha }\left( f\ast \phi _{\varepsilon
}-f\right) \left( x\right) \right\vert \leq c^{\left\vert \alpha \right\vert
+1}\alpha !^{\sigma }\exp \left( -k\varepsilon ^{-\frac{1}{2\sigma -1}%
}\right)  \label{1*3}
\end{equation}
\end{corollary}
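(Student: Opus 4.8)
The plan is to extract the claimed estimate directly from the computation already carried out in the proof of Proposition \ref{pro2}, rather than starting afresh. The crucial observation is that in the key bound (\ref{1*5}),
\begin{equation*}
\left\vert \partial^{\alpha }\left( f\ast \phi_{\varepsilon }-f\right) \left( x\right) \right\vert \leq \left\Vert \phi \right\Vert_{b,\sigma }c^{\left\vert \alpha \right\vert +1}\alpha !^{\sigma }\left( \varepsilon N^{2\sigma -1}\right)^{N}\left( k^{2\sigma -1}T\right)^{-N}a^{-N},
\end{equation*}
the entire dependence on the multi-index $\alpha$ has already been isolated into the single explicit factor $c^{\left\vert \alpha \right\vert +1}\alpha !^{\sigma }$; the remaining factors involve only $\varepsilon$ (through $N$) together with the fixed parameters $k,T,a,b$.

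First I would fix a compact $K$ of $\Omega$ and a number $k>0$, and make exactly the same choices of $b$, $T$, $a$ and $N=N\left( \varepsilon \right)$ as in the proof of Proposition \ref{pro2}, subject to $2^{\sigma }k^{2\sigma -1}bTc\leq \tfrac{1}{2a}$ with $\ln a>1$ and $T>2^{2\sigma -1}$. The decisive point is that these choices depend only on $k$ and $\sigma$ (and on the Gevrey constant $c$ of $f$ on $K$), and not on $\alpha$; hence a single threshold $\varepsilon_{0}$ and a single function $N\left( \varepsilon \right)$ serve all multi-indices simultaneously. I would then reuse verbatim the two $\varepsilon$-estimates established there, namely
\begin{equation*}
a^{-N}\leq \exp \left( -k\varepsilon^{-\frac{1}{2\sigma -1}}\right) \quad \text{and}\quad \frac{\varepsilon N^{2\sigma -1}}{k^{2\sigma -1}T}\leq \left( \frac{1}{\ln a}\right)^{2\sigma -1}<1,
\end{equation*}
which follow from the sandwiching (\ref{2-2}) of $N$.

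Combining these, the product $\left( \varepsilon N^{2\sigma -1}\right)^{N}\left( k^{2\sigma -1}T\right)^{-N}a^{-N}$ is bounded by $\exp \left( -k\varepsilon^{-\frac{1}{2\sigma -1}}\right)$, uniformly in $\alpha$. Substituting into (\ref{1*5}) and absorbing the fixed constant $\left\Vert \phi \right\Vert_{b,\sigma }$ into $c$ (replacing $c$ by a suitably enlarged constant, still independent of $\alpha$) yields precisely (\ref{1*3}). The only point requiring genuine care---and the place where one must resist collapsing the bound into the weaker $\alpha$-independent form (\ref{1*4})---is to confirm that every parameter choice in the proof of Proposition \ref{pro2} is uniform in $\alpha$, so that the factor $c^{\left\vert \alpha \right\vert +1}\alpha !^{\sigma }$ survives intact and the resulting constant $c$ and threshold $\varepsilon_{0}$ are genuinely independent of the multi-index.
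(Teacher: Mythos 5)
Your proposal is correct and follows exactly the paper's own route: the paper states the corollary as an immediate consequence of the intermediate estimate (\ref{1*5}) in the proof of Proposition \ref{pro2}, precisely because the parameter choices $b$, $T$, $a$ and $N=N(\varepsilon)$ there depend only on $k$, $\sigma$ and the Gevrey constant of $f$, never on $\alpha$. Your careful check that the factor $c^{\left\vert \alpha \right\vert +1}\alpha !^{\sigma }$ survives intact, with $c$ and $\varepsilon _{0}$ uniform in $\alpha$ after absorbing $\left\Vert \phi \right\Vert _{b,\sigma }$, is exactly the observation the paper relies on.
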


\section{Sheaf properties of $\mathcal{G}^{\protect\sigma }$}

Let $\Omega ^{\prime }$ be an open subset of $\Omega $ and let $f=\left(
f_{\varepsilon }\right) _{\varepsilon }+\mathcal{N}^{\sigma }\left( \Omega
\right) \in \mathcal{G}^{\sigma }\left( \Omega \right) $, the restriction of
$f$ to $\Omega ^{\prime },$ denoted $f_{/\Omega ^{\prime }}$, is defined as
\begin{equation*}
\left( f_{\varepsilon /\Omega ^{\prime }}\right) _{\varepsilon }+\mathcal{N}%
^{\sigma }\left( \Omega ^{\prime }\right) \in \mathcal{G}^{\sigma }\left(
\Omega ^{\prime }\right)
\end{equation*}

\begin{theorem}
The functor $\Omega \rightarrow \mathcal{G}^{\sigma }\left( \Omega \right) $
is a sheaf of differential algebras on $\mathbb{R}^{n}.$
\end{theorem}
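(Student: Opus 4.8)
The plan is to verify the two defining axioms of a sheaf of differential algebras: the \emph{locality} (identity/monopresheaf) axiom and the \emph{gluing} axiom. That $\mathcal{G}^{\sigma}$ is a presheaf of differential algebras is essentially automatic: for nested opens $\Omega'' \subset \Omega' \subset \Omega$ the restriction maps $f \mapsto f_{/\Omega'}$ respect composition, and since restriction commutes with the algebra operations and with $\partial^{\alpha}$ at the level of representatives $(f_{\varepsilon})_{\varepsilon}$, each restriction is a homomorphism of differential algebras. The only point needing care is that these maps are \emph{well defined} on the quotient, i.e.\ that $(f_{\varepsilon})_{\varepsilon} \in \mathcal{N}^{\sigma}(\Omega)$ implies $(f_{\varepsilon/\Omega'})_{\varepsilon} \in \mathcal{N}^{\sigma}(\Omega')$; this is immediate because every compact $K \subset \Omega'$ is also a compact subset of $\Omega$, so the null estimate \eqref{1*2} on $\Omega$ restricts directly.

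First I would establish locality. Let $\Omega = \bigcup_{\lambda} \Omega_{\lambda}$ be an open cover and suppose $f = \mathrm{cl}(f_{\varepsilon})_{\varepsilon} \in \mathcal{G}^{\sigma}(\Omega)$ satisfies $f_{/\Omega_{\lambda}} = 0$ in $\mathcal{G}^{\sigma}(\Omega_{\lambda})$ for every $\lambda$; I must show $f = 0$ in $\mathcal{G}^{\sigma}(\Omega)$. Fix a compact $K \subset \Omega$ and $k>0$. By compactness $K$ is covered by finitely many $\Omega_{\lambda_1},\dots,\Omega_{\lambda_p}$, and one can choose compacts $K_j \subset \Omega_{\lambda_j}$ with $K = \bigcup_{j=1}^{p} K_j$. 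Applying the null estimate \eqref{1*2} for $(f_{\varepsilon/\Omega_{\lambda_j}})_{\varepsilon}$ on each $K_j$ (with the prescribed $k$ and some $c_j, \varepsilon_{0,j}$) and taking $c = \max_j c_j$, $\varepsilon_0 = \min_j \varepsilon_{0,j}$, one obtains $\sup_{x \in K}|\partial^{\alpha} f_{\varepsilon}(x)| \le c\exp(-k\varepsilon^{-1/(2\sigma-1)})$ for all $\varepsilon \le \varepsilon_0$, since the supremum over $K$ is the maximum of the suprema over the $K_j$. Hence $(f_{\varepsilon})_{\varepsilon} \in \mathcal{N}^{\sigma}(\Omega)$, i.e.\ $f = 0$.

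The gluing axiom is where the real work lies. Given a family $f_{\lambda} = \mathrm{cl}(f_{\lambda,\varepsilon})_{\varepsilon} \in \mathcal{G}^{\sigma}(\Omega_{\lambda})$ that agree on overlaps, $f_{\lambda/\Omega_{\lambda}\cap\Omega_{\mu}} = f_{\mu/\Omega_{\lambda}\cap\Omega_{\mu}}$, I must produce $f \in \mathcal{G}^{\sigma}(\Omega)$ with $f_{/\Omega_{\lambda}} = f_{\lambda}$. The standard device is a $\sigma$-Gevrey partition of unity $(\chi_j)_j$ subordinate to a locally finite refinement $(\Omega_{\lambda(j)})_j$ of the cover, whose existence in the Gevrey class $E^{\sigma}$ (for $\sigma > 1$) is classical; one then sets $f_{\varepsilon} = \sum_j \chi_j\, f_{\lambda(j),\varepsilon}$, a locally finite sum, and defines $f = \mathrm{cl}(f_{\varepsilon})_{\varepsilon}$. \textbf{The hard part will be} checking that this $f_{\varepsilon}$ is moderate and that $f_{/\Omega_{\lambda}} = f_{\lambda}$: on a compact $K \subset \Omega$ only finitely many $\chi_j$ are nonzero, so moderateness follows from the Leibniz rule together with the boundedness of the derivatives of the fixed Gevrey cutoffs $\chi_j$ and the moderateness estimates \eqref{1*1} for the $f_{\lambda(j),\varepsilon}$ on the relevant compacts; for the restriction property, on a compact $K \subset \Omega_{\lambda}$ one writes $f_{\varepsilon} - f_{\lambda,\varepsilon} = \sum_j \chi_j(f_{\lambda(j),\varepsilon} - f_{\lambda,\varepsilon})$, and on $\mathrm{supp}\,\chi_j \cap K \subset \Omega_{\lambda(j)} \cap \Omega_{\lambda}$ the coherence hypothesis gives $(f_{\lambda(j),\varepsilon} - f_{\lambda,\varepsilon})_{\varepsilon} \in \mathcal{N}^{\sigma}$, so the finite sum lies in $\mathcal{N}^{\sigma}(\Omega_{\lambda})$ by the same partition argument as in the locality step. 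This shows $f_{/\Omega_{\lambda}} = f_{\lambda}$ and completes the verification.
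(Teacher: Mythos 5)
Your proof is correct and follows essentially the same route as the paper: locality via a finite family of compacts $K_{j}\subset \Omega _{\lambda _{j}}$ covering $K$ and combining the $\mathcal{N}^{\sigma }$-estimates, and gluing via $f_{\varepsilon }=\sum_{j}\chi _{j}f_{\lambda (j),\varepsilon }$ (locally finite, extended by zero) with the identity $f_{\varepsilon }-f_{\lambda ,\varepsilon }=\sum_{j}\chi _{j}\left( f_{\lambda (j),\varepsilon }-f_{\lambda ,\varepsilon }\right) $ handled by coherence on the compacts $K\cap \mathrm{supp}\,\chi _{j}\subset \Omega _{\lambda (j)}\cap \Omega _{\lambda }$. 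The only deviation is your insistence on a Gevrey $D^{\sigma }$ partition of unity where the paper uses an ordinary $C^{\infty }$ one; the smooth choice suffices (and, unlike yours, covers $\sigma =1$, where $D^{\sigma }$ is trivial) because the $\mathcal{E}_{m}^{\sigma }$ and $\mathcal{N}^{\sigma }$ estimates allow the constant $c$ to depend on $\alpha $, so multiplication by a fixed smooth cutoff preserves them via Leibniz -- Gevrey regularity of the cutoffs would only become necessary when gluing in the regular subalgebra $\mathcal{G}^{\sigma ,\infty }$, where constants of the form $c^{\left\vert \alpha \right\vert +1}\alpha !^{\sigma }$ are required.
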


\begin{proof}
Let $\Omega $ be a non void open of $\mathbb{R}^{n}$ and $\left( \Omega
_{\lambda }\right) _{\lambda \in \Lambda }$ be an open covering of $\Omega $%
. we have to show the properties

S1) If $f,g\in \mathcal{G}^{\sigma }\left( \Omega \right) $ such that $%
f_{/\Omega _{\lambda }}=g_{/\Omega _{\lambda }},\forall \lambda \in \Lambda $%
, then $f=g$

S2) If for each $\lambda \in \Lambda ,$ we have $f_{\lambda }\in \mathcal{G}%
^{\sigma }\left( \Omega _{\lambda }\right) $, such that
\begin{equation*}
f_{\lambda /\Omega _{\lambda }\cap \Omega _{\mu }}=f_{\mu /\Omega _{\lambda
}\cap \Omega _{\mu }}\text{ for all }\lambda ,\mu \in \Lambda \text{ with }%
\Omega _{\lambda }\cap \Omega _{\mu }\neq \phi ,
\end{equation*}%
then there exists a unique $f\in \mathcal{G}^{\sigma }\left( \Omega \right) $
with $f_{/\Omega _{\lambda }}=f_{\lambda },\forall \lambda \in \Lambda .$

Let show S1, take $K$ a compact subset of $\Omega $, then there exist
compact sets $K_{1},K_{2},...,K_{m}$ and indices $\lambda _{1},\lambda
_{2},...,\lambda _{m}\in \Lambda $ such that
\begin{equation*}
K\subset \bigcup\limits_{i=1}^{m}K_{i}\text{ and }K_{i}\subset \Omega
_{\lambda _{i}},
\end{equation*}%
where $\left( f_{\varepsilon }-g_{\varepsilon }\right) _{\varepsilon }$
satisfies the $\mathcal{N}^{\sigma }$-estimate on each $K_{i},$ then it
satisfies the $\mathcal{N}^{\sigma }$-estimate on $K$ which means $\left(
f_{\varepsilon }-g_{\varepsilon }\right) _{\varepsilon }\in \mathcal{N}%
^{\sigma }\left( \Omega \right) .$

To show S2, let $\left( \chi _{j}\right) _{j=1}^{\infty }$ be a $C^{\infty }$%
-partition of unity subordinate to the covering $\left( \Omega _{\lambda
}\right) _{\lambda \in \Lambda }$. Set
\begin{equation*}
f:=\left( f_{\varepsilon }\right) _{\varepsilon }+\mathcal{N}^{\sigma
}\left( \Omega \right) ,
\end{equation*}%
where $f_{\varepsilon }=\sum\limits_{j=1}^{\infty }\chi _{j}f_{\lambda
_{j}\varepsilon }$ and $\left( f_{\lambda _{j}\varepsilon }\right)
_{\varepsilon }$ is a representative of $f_{\lambda _{j}}$. Moreover, we set
$f_{\lambda _{j}\varepsilon }=0$ on $\Omega \backslash \Omega _{\lambda
_{j}} $, so that $\chi _{j}f_{\lambda _{j}\varepsilon }$ is $C^{\infty }$ on
all of $\Omega $. First Let $K$ be compact subset of $\Omega $, we have $%
K_{j}=K\cap supp\chi _{j}$ is a compact subset of $\Omega _{\lambda _{j}}$
and $\left( f_{\lambda _{j}\varepsilon }\right) _{\varepsilon }\in \mathcal{E%
}_{m}^{\sigma }\left( \Omega _{\lambda _{j}}\right) $, then $\left( \chi
_{j}f_{\lambda _{j}\varepsilon }\right) $ satisfies $\mathcal{E}_{m}^{\sigma
}$-estimate on each $K_{j}$, we have $\chi _{j}\left( x\right) \equiv 0$ on $%
K$ except for finite number of $j$, i.e. $\exists N>0$, such that
\begin{equation*}
\sum_{j=1}^{\infty }\chi _{j}f_{\lambda _{j}\varepsilon }\left( x\right)
=\sum_{j=1}^{N}\chi _{j}f_{\lambda _{j}\varepsilon }\left( x\right) ,\forall
x\in K
\end{equation*}%
So $\left( \sum \chi _{j}f_{\lambda _{j}\varepsilon }\right) $ satisfies $%
\mathcal{E}_{m}^{\sigma }$-estimate on $K,$ which means $\left(
f_{\varepsilon }\right) _{\varepsilon }\in \mathcal{E}_{m}^{\sigma }\left(
\Omega \right) .$ It remains to show that $f_{/\Omega _{\lambda
}}=f_{\lambda },\forall \lambda \in \Lambda .$ Let $K$ be a compact subset
of $\Omega _{\lambda }$, choose $N>0$ in such a way that $%
\sum\limits_{j=1}^{N}\chi _{j}\left( x\right) \equiv 1$ on a neighborhood $%
\Omega ^{\prime }$ of $K$ with $\overline{\Omega ^{\prime }}$ is compact of $%
\Omega _{\lambda }$. For $x\in K,$%
\begin{equation*}
f_{\varepsilon }\left( x\right) -f_{\lambda \varepsilon }\left( x\right)
=\sum_{j=1}^{N}\chi _{j}\left( x\right) \left( f_{\lambda _{j}\varepsilon
}\left( x\right) -f_{\lambda \varepsilon }\left( x\right) \right)
\end{equation*}%
Since $\left( f_{\lambda _{j}\varepsilon }-f_{\lambda \varepsilon }\right)
\in \mathcal{N}^{\sigma }\left( \Omega _{\lambda _{j}}\cap \Omega _{\lambda
}\right) $ and $K_{j}=K\cap supp\chi _{j}$ is a compact subset of $\Omega
\cap \Omega _{\lambda _{j}}$, then $\left( \sum\limits_{j=1}^{N}\chi
_{j}\left( f_{\lambda _{j}\varepsilon }-f_{\lambda \varepsilon }\right)
\right) $ satisfies the $\mathcal{N}^{\sigma }$-estimate on $K$. The
uniqueness of such $f$ $\in \mathcal{G}^{\sigma }\left( \Omega \right) $
follows from S1.
\end{proof}

Now it is legitimate to introduce the support of $f\in $ $\mathcal{G}%
^{\sigma }\left( \Omega \right) $ as in the classical case.

\begin{definition}
The support of $f\in $ $\mathcal{G}^{\sigma }\left( \Omega \right) ,$
denoted $supp_{g}^{\sigma }f,$ is the complement of the largest open set $U$
such that $f_{/U}=0$.
\end{definition}

As in \cite{GKOS}, we construct the embedding of $D_{3\sigma -1}^{\prime
}\left( \Omega \right) $\ into $\mathcal{G}^{\sigma }\left( \Omega \right) $
using the sheaf properties of $\mathcal{G}^{\sigma }$. First, choose some
covering $\left( \Omega _{\lambda }\right) _{\lambda \in \Lambda }$ of $%
\Omega $ such that each $\overline{\Omega _{\lambda }}$ is a compact subset
of $\Omega $. Let $\left( \psi _{\lambda }\right) _{\lambda \in \Lambda }$
be a family of elements of $D^{\sigma }\left( \Omega \right) \subset
D^{3\sigma -1}\left( \Omega \right) $ with $\psi _{\lambda }\equiv 1$ in
some neighborhood of $\overline{\Omega _{\lambda }}$. For each $\lambda $ we
define
\begin{equation*}
\begin{array}{cccc}
J_{\lambda } & D_{3\sigma -1}^{\prime }\left( \Omega \right) & \rightarrow &
\mathcal{G}^{\sigma }\left( \Omega _{\lambda }\right) \\
& T & \rightarrow & \left[ T\right] _{\lambda }=cl\left( \left( \psi
_{\lambda }T\ast \phi _{\varepsilon }\right) _{/\Omega _{\lambda }}\right)
_{\varepsilon }%
\end{array}%
\end{equation*}%
One can easily show that $\left( \left( \psi _{\lambda }T\ast \phi
_{\varepsilon }\right) _{/\Omega _{\lambda }}\right) _{\varepsilon }\in
\mathcal{E}_{m}^{\sigma }\left( \Omega _{\lambda }\right) $, see the proof
of theorem \ref{pro-inj}, and that the family $\left( J_{\lambda }\left(
T\right) \right) _{\lambda \in \Lambda }$ is coherent, i.e.
\begin{equation*}
J_{\lambda }\left( T\right) _{/\Omega _{\lambda }\cap \Omega _{\mu }}=J_{\mu
}\left( T\right) _{/\Omega _{\lambda }\cap \Omega _{\mu }},\forall \lambda
,\mu \in \Lambda ,
\end{equation*}%
Then if $\left( \chi _{j}\right) _{j=1}^{\infty }$ is a smooth partition of
unity subordinate to $\left( \Omega _{\lambda }\right) _{\lambda \in \Lambda
}$, the precedent theorem allows the embedding
\begin{equation}
\begin{array}{cccc}
J_{\sigma }: & D_{3\sigma -1}^{\prime }\left( \Omega \right) & \rightarrow &
\mathcal{G}^{\sigma }\left( \Omega \right) \\
& T & \rightarrow & \left[ T\right] =cl\left( \sum\limits_{j=1}^{\infty
}\chi _{j}\left( \psi _{\lambda _{j}}T\ast \phi _{\varepsilon }\right)
\right)%
\end{array}
\label{mapJs}
\end{equation}

We can also embed canonically $D_{3\sigma -1}^{\prime }\left( \Omega \right)
$ into $\mathcal{G}^{\sigma }\left( \Omega \right) ,$ see \cite{Del1} for
the case $D^{\prime }\left( \Omega \right) $ and $\mathcal{G}\left( \Omega
\right) $. Indeed, let $\varphi \in D^{\sigma }\left( B\left( 0,2\right)
\right) ,$ $0\leq \varphi \leq 1,\varphi \equiv 1$ on $B\left( 0,1\right) $
and take $\phi \in S^{\left( \sigma \right) }$, define the function $\rho
_{\varepsilon }$\ by
\begin{equation}
\rho _{\varepsilon }\left( x\right) =\left( \frac{1}{\varepsilon }\right)
^{m}\phi \left( \frac{x}{\varepsilon }\right) \varphi \left( x\left\vert \ln
\varepsilon \right\vert \right)  \label{conv}
\end{equation}%
It is easy to prove that, $\exists c>0$, such that $\forall \alpha \in
\mathbb{Z}_{+}^{m},$
\begin{equation*}
\sup_{x\in \mathbb{R}^{m}}\left\vert \partial ^{\alpha }\rho _{\varepsilon
}\left( x\right) \right\vert \leq c^{\left\vert \alpha \right\vert +1}\alpha
!^{\sigma }\varepsilon ^{-m-\left\vert \alpha \right\vert }
\end{equation*}%
Define the injective map
\begin{equation}
\begin{array}{cccc}
J: & D_{3\sigma -1}^{\prime }\left( \Omega \right) & \rightarrow & \mathcal{G%
}^{\sigma }\left( \Omega \right) \\
& T & \rightarrow & \left[ T\right] =cl\left( \left( T\ast \rho
_{\varepsilon }\right) _{\varepsilon }\right)%
\end{array}
\label{mapJ}
\end{equation}

\begin{proposition}
\label{pro-coinc}The map $J$ coincides on $E_{3\sigma -1}^{\prime }\left(
\Omega \right) $ with $J_{0}.$
\end{proposition}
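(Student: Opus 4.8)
The plan is to show that $J$ and $J_{0}$ define the same class in $\mathcal{G}^{\sigma}(\Omega)$ on $E_{3\sigma -1}^{\prime }(\Omega)$, that is, that for $T\in E_{3\sigma -1}^{\prime }(\Omega)$ the net $\left( T\ast \rho _{\varepsilon }-T\ast \phi _{\varepsilon }\right) _{\varepsilon }=\left( T\ast \psi _{\varepsilon }\right) _{\varepsilon }$ lies in $\mathcal{N}^{\sigma }(\Omega)$, where
\begin{equation*}
\psi _{\varepsilon }(z)=\rho _{\varepsilon }(z)-\phi _{\varepsilon }(z)=\varepsilon ^{-m}\phi \left( \frac{z}{\varepsilon }\right) \left[ \varphi \left( z\left\vert \ln \varepsilon \right\vert \right) -1\right] .
\end{equation*}
Since $\varphi \equiv 1$ on $B(0,1)$, the bracket vanishes for $\left\vert z\right\vert \leq \left\vert \ln \varepsilon \right\vert ^{-1}$, so $\psi _{\varepsilon }$ and each of its derivatives are supported in $\{\left\vert z\right\vert \geq \left\vert \ln \varepsilon \right\vert ^{-1}\}$. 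Both $\left( T\ast \rho _{\varepsilon }\right) _{\varepsilon }$ and $\left( T\ast \phi _{\varepsilon }\right) _{\varepsilon }$ are moderate (the latter by the proof of Theorem \ref{pro-inj}), hence so is their difference. Therefore, by the null characterization of Proposition \ref{carN}, it suffices to prove only the zeroth-order estimate: for every compact $L$ of $\Omega $ and every $k>0$,
\begin{equation*}
\sup_{x\in L}\left\vert T\ast \psi _{\varepsilon }(x)\right\vert \leq c\exp \left( -k\varepsilon ^{-\frac{1}{2\sigma -1}}\right) ,\quad \varepsilon \leq \varepsilon _{0}.
\end{equation*}

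Next I would use the structure of $T$ exactly as in Theorem \ref{pro-inj}: write $T=\sum _{\gamma }a_{\gamma }D^{\gamma }f_{\gamma }$, with $(a_{\gamma })$ the coefficients of a $(3\sigma -1)$-ultradifferential operator, $f_{\gamma }\in C_{0}(K)$ and $\sup _{\gamma ,x}\left\vert f_{\gamma }(x)\right\vert \leq M$. Transferring the derivatives onto the mollifier gives
\begin{equation*}
T\ast \psi _{\varepsilon }(x)=\sum _{\gamma }a_{\gamma }\int f_{\gamma }(y)\left( D^{\gamma }\psi _{\varepsilon }\right) (x-y)\,dy,\qquad \left\vert T\ast \psi _{\varepsilon }(x)\right\vert \leq M\sum _{\gamma }\left\vert a_{\gamma }\right\vert \int \left\vert D^{\gamma }\psi _{\varepsilon }(z)\right\vert dz.
\end{equation*}
The point is to extract a super-small factor from each $\int \left\vert D^{\gamma }\psi _{\varepsilon }\right\vert $. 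Expanding $D^{\gamma }\psi _{\varepsilon }$ by Leibniz and using the Gevrey bounds on $\varphi $, every resulting term is supported in $\{\left\vert z\right\vert \geq \left\vert \ln \varepsilon \right\vert ^{-1}\}$ and, after the substitution $w=z/\varepsilon $, is controlled by a tail integral $\int _{\left\vert w\right\vert \geq R_{\varepsilon }}\left\vert D^{\mu }\phi (w)\right\vert dw$ with $R_{\varepsilon }=(\varepsilon \left\vert \ln \varepsilon \right\vert )^{-1}$. Inserting the weight $\left\vert w\right\vert ^{N}$ afforded by the seminorm $\left\Vert \phi \right\Vert _{b,\sigma }$ and optimizing in $N$ yields, for some $c>0$,
\begin{equation*}
\int _{\left\vert w\right\vert \geq R_{\varepsilon }}\left\vert D^{\mu }\phi (w)\right\vert dw\leq \left\Vert \phi \right\Vert _{b,\sigma }\,b^{\left\vert \mu \right\vert }\mu !^{\sigma }\exp \left( -c\left( \varepsilon \left\vert \ln \varepsilon \right\vert \right) ^{-\frac{1}{\sigma }}\right) ,
\end{equation*}
the decaying factor being independent of $\gamma $ and $\mu $.

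Finally, the remaining $\gamma $-sum is summed exactly as in Theorem \ref{pro-inj}, using the ultradifferentiability of the $a_{\gamma }$ and $(\ref{6})$, which produces a moderate factor $\exp (k_{1}\varepsilon ^{-1/(2\sigma -1)})$; hence
\begin{equation*}
\sup_{x\in L}\left\vert T\ast \psi _{\varepsilon }(x)\right\vert \leq c\exp \left( k_{1}\varepsilon ^{-\frac{1}{2\sigma -1}}-c^{\prime }\left( \varepsilon \left\vert \ln \varepsilon \right\vert \right) ^{-\frac{1}{\sigma }}\right) .
\end{equation*}
Here lies the crux and the only genuine obstacle: the comparison of the two scales. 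Since $\sigma >1$ (the regime in which the mollifier $\phi \in \mathcal{S}^{(\sigma )}$ and $\varphi \in D^{\sigma }$ exist), the exponent $\tfrac{1-\sigma }{\sigma (2\sigma -1)}$ of $\varepsilon $ in the ratio $(\varepsilon \left\vert \ln \varepsilon \right\vert )^{-1/\sigma }/\varepsilon ^{-1/(2\sigma -1)}$ is strictly negative, so this ratio tends to $+\infty $ (the power of $\varepsilon $ beats the logarithm). Consequently $(\varepsilon \left\vert \ln \varepsilon \right\vert )^{-1/\sigma }$ dominates every multiple of $\varepsilon ^{-1/(2\sigma -1)}$ for small $\varepsilon $, so for any prescribed $k>0$ one has $k_{1}\varepsilon ^{-1/(2\sigma -1)}-c^{\prime }(\varepsilon \left\vert \ln \varepsilon \right\vert )^{-1/\sigma }\leq -k\varepsilon ^{-1/(2\sigma -1)}$ once $\varepsilon $ is small enough. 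This is the required zeroth-order estimate, whence $\left( T\ast \psi _{\varepsilon }\right) _{\varepsilon }\in \mathcal{N}^{\sigma }(\Omega )$ and $J=J_{0}$ on $E_{3\sigma -1}^{\prime }(\Omega )$. The delicate matching of the cut-off scale $\left\vert \ln \varepsilon \right\vert ^{-1}$ in $(\ref{conv})$ with the Gevrey moderateness scale is exactly what makes this work, and is the step deserving the most care.
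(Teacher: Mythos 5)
Your proposal is correct, and although it shares the paper's overall architecture --- reduce to showing $\left( T\ast \left( \rho _{\varepsilon }-\phi _{\varepsilon }\right) \right) _{\varepsilon }\in \mathcal{N}^{\sigma }\left( \Omega \right) $, exploit that $\rho _{\varepsilon }-\phi _{\varepsilon }$ is supported in $\left\{ \left\vert z\right\vert \geq \left\vert \ln \varepsilon \right\vert ^{-1}\right\} $, and trade this against the decay of $\phi \in \mathcal{S}^{\left( \sigma \right) }$ --- your execution differs in three respects, one of them substantive. First, you reduce to the zeroth-order estimate via Proposition \ref{carN}, whereas the paper estimates all derivatives $\partial ^{\alpha }\left( \rho _{\varepsilon }-\phi _{\varepsilon }\right) $ directly; your reduction is legitimate (modulo the moderateness of $\left( T\ast \rho _{\varepsilon }\right) _{\varepsilon }$, which you assert rather than prove, but which follows from $\sup \left\vert \partial ^{\alpha }\rho _{\varepsilon }\right\vert \leq c^{\left\vert \alpha \right\vert +1}\alpha !^{\sigma }\varepsilon ^{-m-\left\vert \alpha \right\vert }$ exactly as in the proof of Theorem \ref{pro-inj}). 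Second, you pair $T$ with the mollifier through the structure theorem $T=\sum a_{\gamma }D^{\gamma }f_{\gamma }$, while the paper uses the $E_{\sigma }^{\prime }$-seminorm continuity estimate; these are interchangeable, and your tail bound is in fact cleaner because it is derived directly from the $L^{1}$-seminorm (\ref{2-1}), whereas the paper's pointwise bounds on $\partial ^{\beta }\phi $ presuppose a sup-norm characterization of $\mathcal{S}^{\left( \sigma \right) }$. Third --- the substantive point --- your treatment of the logarithmic scale is sharper than the paper's and actually repairs a lossy step in it: the paper absorbs the cut-off scale via $\left\vert \ln \varepsilon \right\vert ^{\left\vert 2\gamma \right\vert }\leq \varepsilon ^{-\left\vert \gamma \right\vert }$, which degrades the optimized decay to $\exp \left( -c\varepsilon ^{-\frac{1}{2\sigma }}\right) $; since $\frac{1}{2\sigma }<\frac{1}{2\sigma -1}$, this rate does not dominate the $\mathcal{N}^{\sigma }$ scale, and the final comparison displayed in (\ref{2-2-2}), namely $\exp \left( -k\varepsilon ^{-\frac{1}{2\sigma }}\right) \leq \exp \left( -k\varepsilon ^{-\frac{1}{2\sigma -1}}\right) $, actually runs in the wrong direction for $\varepsilon \in \left] 0,1\right[ $. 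You instead keep $\varepsilon \left\vert \ln \varepsilon \right\vert $ intact, optimize $\inf_{N}N!^{\sigma }\left( b\varepsilon \left\vert \ln \varepsilon \right\vert \right) ^{N}$, and obtain the rate $\exp \left( -c\left( \varepsilon \left\vert \ln \varepsilon \right\vert \right) ^{-\frac{1}{\sigma }}\right) $; your scale comparison is then exactly right: for $\sigma >1$ the exponent $\frac{1-\sigma }{\sigma \left( 2\sigma -1\right) }$ is strictly negative, the power of $\varepsilon $ beats the logarithm, and $\left( \varepsilon \left\vert \ln \varepsilon \right\vert \right) ^{-\frac{1}{\sigma }}$ dominates every multiple of $\varepsilon ^{-\frac{1}{2\sigma -1}}$, closing the null estimate. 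In short, your proof is not merely correct: it supplies the decay rate that the paper's own optimization narrowly misses, and you correctly identify the matching of the cut-off scale with the moderateness scale as the crux.
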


\begin{proof}
We have to show that for $T\in E_{3\sigma -1}^{\prime }\left( \Omega \right)
,$ the net $\left( T\ast \left( \rho _{\varepsilon }-\phi _{\varepsilon
}\right) \right) _{\varepsilon }\in \mathcal{N}^{\sigma }\left( \Omega
\right) $. For $x\left\vert \ln \varepsilon \right\vert <1,$ we have $%
\partial ^{\alpha }\left( \rho _{\varepsilon }-\phi _{\varepsilon }\right)
\left( x\right) =0$, $\forall \alpha \in \mathbb{Z}_{+}^{m}.$ For $%
x\left\vert \ln \varepsilon \right\vert \geq 1,$ we have
\begin{eqnarray*}
\partial ^{\alpha }\left( \rho _{\varepsilon }-\phi _{\varepsilon }\right)
\left( x\right) &=&\varepsilon ^{-m-\left\vert \alpha \right\vert }\partial
^{\alpha }\phi \left( \frac{x}{\varepsilon }\right) \left( \varphi \left(
x\left\vert \ln \varepsilon \right\vert \right) -1\right) + \\
&&+\varepsilon ^{-m}\sum_{\left\vert \beta \right\vert =1}^{\alpha
}\varepsilon ^{-\left\vert \beta \right\vert }\left\vert \ln \varepsilon
\right\vert ^{\left\vert \alpha -\beta \right\vert }\partial ^{\beta }\phi
\left( \frac{x}{\varepsilon }\right) \partial ^{\alpha -\beta }\varphi
\left( x\left\vert \ln \varepsilon \right\vert \right)
\end{eqnarray*}%
Then, $\exists c>0,\forall b>0,\forall \gamma \in \mathbb{Z}_{+}^{m},$
\begin{eqnarray*}
\left\vert \partial ^{\alpha }\left( \rho _{\varepsilon }-\phi _{\varepsilon
}\right) \left( x\right) \right\vert &\leq &b^{\left\vert \gamma \right\vert
+\left\vert \alpha \right\vert +1}\varepsilon ^{-m-\left\vert \alpha
\right\vert }\alpha !^{\sigma }\gamma !^{\sigma }\left( \frac{\varepsilon }{%
\left\vert x\right\vert }\right) ^{\left\vert \gamma \right\vert
}+\varepsilon ^{-m}\sum_{\left\vert \beta \right\vert =1}^{\alpha
}c^{\left\vert \alpha -\beta \right\vert +1}b^{\left\vert \gamma \right\vert
+\left\vert \beta \right\vert +1}\times \\
&&\times \varepsilon ^{-\left\vert \beta \right\vert }\left\vert \ln
\varepsilon \right\vert ^{\left\vert \alpha -\beta \right\vert }\beta
!^{\sigma }\left( \alpha -\beta \right) !^{\sigma }\gamma !^{\sigma }\left(
\frac{\varepsilon }{\left\vert x\right\vert }\right) ^{\left\vert \gamma
\right\vert } \\
&\leq &b^{\left\vert \gamma \right\vert +\left\vert \alpha \right\vert
+1}\varepsilon ^{-m-\left\vert \alpha \right\vert }\alpha !^{\sigma }\gamma
!^{\sigma }\left( \varepsilon \left\vert \ln \varepsilon \right\vert \right)
^{\left\vert \gamma \right\vert }+\varepsilon ^{-m}\sum_{\left\vert \beta
\right\vert =1}^{\alpha }b^{\left\vert \gamma \right\vert +\left\vert \beta
\right\vert +1}c^{\left\vert \alpha -\beta \right\vert +1}\times \\
&&\times \varepsilon ^{-\left\vert \beta \right\vert }\left\vert \ln
\varepsilon \right\vert ^{\left\vert \alpha -\beta \right\vert }\beta
!^{\sigma }\left( \alpha -\beta \right) !^{\sigma }\gamma !^{\sigma }\left(
\varepsilon \left\vert \ln \varepsilon \right\vert \right) ^{\left\vert
\gamma \right\vert }
\end{eqnarray*}%
So, $\exists c>0,\forall b>0,\forall \gamma \in \mathbb{Z}_{+}^{m}$,
\begin{equation}
\left\vert \partial ^{\alpha }\left( \rho _{\varepsilon }-\phi _{\varepsilon
}\right) \left( x\right) \right\vert \leq b^{\left\vert 2\gamma \right\vert
+\left\vert \alpha \right\vert +1}c^{\left\vert 2\gamma +\alpha \right\vert
+1}\varepsilon ^{\left\vert 2\gamma \right\vert -\left\vert \alpha
\right\vert -m}\left\vert \ln \varepsilon \right\vert ^{\left\vert 2\gamma
\right\vert }\alpha !^{\sigma }\left( 2\gamma \right) !^{\sigma }
\label{2-2-1}
\end{equation}%
As $\varepsilon \in \left] 0,1\right] ,$\ we have $\left\vert \ln
\varepsilon \right\vert ^{2}\leq \varepsilon ^{-1},$ then $\forall \gamma
\in \mathbb{Z}_{+}^{m},$%
\begin{equation*}
\left\vert \ln \varepsilon \right\vert ^{\left\vert 2\gamma \right\vert
}\leq \varepsilon ^{-\left\vert \gamma \right\vert },
\end{equation*}%
then (\ref{2-2-1}) gives $\exists c>0,\forall b>0,\forall \gamma \in \mathbb{%
Z}_{+}^{m}$,
\begin{equation*}
\left\vert \partial ^{\alpha }\left( \rho _{\varepsilon }-\phi _{\varepsilon
}\right) \left( x\right) \right\vert \leq b^{\left\vert 2\gamma \right\vert
+\left\vert \alpha \right\vert +1}c^{\left\vert 2\gamma +\alpha \right\vert
+1}\alpha !^{\sigma }\left( 2\gamma \right) !^{\sigma }\varepsilon
^{\left\vert \gamma \right\vert -\left\vert \alpha \right\vert -m}
\end{equation*}%
Since
\begin{equation*}
\left( 2\gamma \right) !^{\sigma }\leq 2^{\sigma \left\vert 2\gamma
\right\vert }\gamma !^{2\sigma },
\end{equation*}%
then for $N=\left\vert \gamma \right\vert ,$ we obtain
\begin{eqnarray*}
\left\vert \partial ^{\alpha }\left( \rho _{\varepsilon }-\phi _{\varepsilon
}\right) \left( x\right) \right\vert &\leq &b^{2N+\left\vert \alpha
\right\vert +1}c^{2N+\left\vert \alpha \right\vert +1}2^{2N\sigma }\alpha
!^{\sigma }N!^{2\sigma }\varepsilon ^{N-\left\vert \alpha \right\vert -m} \\
&\leq &2^{-N}\left( 2^{2\sigma -1}b^{2}c^{2}\right) ^{N}N!^{2\sigma
}\varepsilon ^{N}\times \left( bc\right) ^{\left\vert \alpha \right\vert
+1}\alpha !^{\sigma }\varepsilon ^{-\left\vert \alpha \right\vert -m} \\
&\leq &c^{\prime }h^{\left\vert \alpha \right\vert +1}\alpha !^{\sigma
}\varepsilon ^{-\left\vert \alpha \right\vert -m}\exp \left( -\left(
2^{2\sigma -1}b^{2}c^{2}\right) ^{-\frac{1}{2\sigma }}\varepsilon ^{-\frac{1%
}{2\sigma }}\right) ,
\end{eqnarray*}%
where $c^{\prime }=cb\sum\limits_{n\geq 0}2^{-N}$ and $h=cb,$ then for any $%
k $ take $b>0$ and $\varepsilon $ sufficiently small such that%
\begin{equation*}
\varepsilon ^{-\left\vert \alpha \right\vert -m}\exp \left( -\left(
2^{2\sigma -1}b^{2}c^{2}\right) ^{-\frac{1}{2\sigma }}\varepsilon ^{-\frac{1%
}{2\sigma }}\right) \leq \exp \left( -k\varepsilon ^{-\frac{1}{2\sigma -1}%
}\right)
\end{equation*}%
Then we have $\exists h>0,\forall k>0,\exists \varepsilon _{0}\in \left] 0,1%
\right] $ such that $\forall \varepsilon \leq \varepsilon _{0},$
\begin{equation}
\left\vert \partial ^{\alpha }\left( \rho _{\varepsilon }-\phi _{\varepsilon
}\right) \left( x\right) \right\vert \leq h^{\left\vert \alpha \right\vert
+1}\alpha !^{\sigma }\exp \left( -k\varepsilon ^{-\frac{1}{2\sigma }}\right)
\leq h^{\left\vert \alpha \right\vert +1}\alpha !^{\sigma }\exp \left(
-k\varepsilon ^{-\frac{1}{2\sigma -1}}\right)  \label{2-2-2}
\end{equation}%
As $E_{3\sigma -1}^{\prime }\left( \Omega \right) \subset E_{\sigma
}^{\prime }\left( \Omega \right) ,$ then $\exists L$ a compact subset of $%
\Omega $ such that $\forall h>0,\exists c>0,$ and
\begin{equation*}
\left\vert T\left( y\right) ,\left( \rho _{\varepsilon }-\phi _{\varepsilon
}\right) \left( x-y\right) \right\vert \leq c\sup_{\alpha \in \mathbb{Z}%
_{+}^{m},y\in L}\frac{h^{\left\vert \alpha \right\vert }}{\alpha !^{\sigma }}%
\left\vert \partial _{y}^{\alpha }\left( \rho _{\varepsilon }-\phi
_{\varepsilon }\right) \left( x-y\right) \right\vert
\end{equation*}%
So for $x\in K$, $y\in L$ and by (\ref{2-2-2}), we obtain%
\begin{equation*}
\left\vert \left( T\ast \left( \rho _{\varepsilon }-\phi _{\varepsilon
}\right) \right) \left( x\right) \right\vert \leq c\exp \left( -k\varepsilon
^{-\frac{1}{2\sigma -1}}\right) ,
\end{equation*}%
which proves that $\left( T\ast \left( \rho _{\varepsilon }-\phi
_{\varepsilon }\right) \right) _{\varepsilon }\in \mathcal{N}^{\sigma
}\left( \Omega \right) .$
\end{proof}

The sheaf properties of $\mathcal{G}^{\sigma }$ and the proof of proposition %
\ref{pro-coinc} show that the embedding $J_{\sigma }$ coincides with the
embedding $J.$ Summing up, we have the following commutative diagram%
\begin{equation*}
\begin{array}{ccc}
E^{\sigma }\left( \Omega \right) & \rightarrow & \mathcal{G}^{\sigma }\left(
\Omega \right) \\
\downarrow & \nearrow &  \\
D_{3\sigma -1}^{\prime }\left( \Omega \right) &  &
\end{array}%
\end{equation*}

\begin{definition}
The space of elements of $\mathcal{G}^{\sigma }\left( \Omega \right) $\ with
compact support is denoted $\mathcal{G}_{C}^{\sigma }\left( \Omega \right) .$
\end{definition}

As in the case of Colombeau generalized functions, it is not difficult to
prove the following result.

\begin{proposition}
The space $\mathcal{G}_{C}^{\sigma }\left( \Omega \right) $ is the space of
elements $f$ of $\mathcal{G}^{\sigma }\left( \Omega \right) $ satisfying :
there exist a representative $\left( f_{\varepsilon }\right) _{\varepsilon
\in \left] 0,1\right] }$\ and a compact subset $K$ of $\Omega $ such that $%
suppf_{\varepsilon }\subset K,\forall \varepsilon \in \left] 0,1\right] .$
\newline
\end{proposition}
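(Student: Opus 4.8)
The statement is an equivalence, so the plan is to treat the two implications separately, the non‑trivial one being the passage from compact support to the existence of a representative with uniformly compact support.

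The converse direction is immediate. Assume $f$ has a representative $\left( f_{\varepsilon }\right) _{\varepsilon }$ and a compact $K\subset \Omega $ with $supp\,f_{\varepsilon }\subset K$ for all $\varepsilon $. On the open set $U=\Omega \backslash K$ every $f_{\varepsilon }$ vanishes, so $\left( f_{\varepsilon /U}\right) _{\varepsilon }\in \mathcal{N}^{\sigma }\left( U\right) $ and hence $f_{/U}=0$. By the definition of $supp_{g}^{\sigma }f$ as the complement of the largest open set on which $f$ vanishes, we get $supp_{g}^{\sigma }f\subset K$, which is compact; thus $f\in \mathcal{G}_{C}^{\sigma }\left( \Omega \right) $.

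For the main direction, let $f\in \mathcal{G}_{C}^{\sigma }\left( \Omega \right) $ and put $L:=supp_{g}^{\sigma }f$, a compact subset of $\Omega $. I would choose an open neighborhood $V$ of $L$ with $\overline{V}$ compact in $\Omega $, and a cutoff $\psi \in C_{0}^{\infty }\left( \Omega \right) $ with $\psi \equiv 1$ on $V$ and $K:=supp\,\psi $ compact in $\Omega $. Fixing an arbitrary representative $\left( f_{\varepsilon }\right) _{\varepsilon }$ of $f$, I set $g_{\varepsilon }:=\psi f_{\varepsilon }$, so that $supp\,g_{\varepsilon }\subset K$ for every $\varepsilon $. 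The task then reduces to showing that $\left( g_{\varepsilon }\right) _{\varepsilon }$ is again a representative of $f$, i.e. that $\left( \left( \psi -1\right) f_{\varepsilon }\right) _{\varepsilon }\in \mathcal{N}^{\sigma }\left( \Omega \right) $. Moderateness of $\left( g_{\varepsilon }\right) _{\varepsilon }$ is not an issue: multiplication by the fixed smooth function $\psi $ preserves $\mathcal{E}_{m}^{\sigma }\left( \Omega \right) $ by exactly the Leibniz computation used in the proof that $\mathcal{E}_{m}^{\sigma }\left( \Omega \right) $ is an algebra, the constant being allowed to depend on $\alpha $.

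To prove $\left( \left( \psi -1\right) f_{\varepsilon }\right) _{\varepsilon }\in \mathcal{N}^{\sigma }\left( \Omega \right) $ I would use that, by the very definition of the support, $f$ vanishes on $\Omega \backslash L$, that is $\left( f_{\varepsilon }\right) _{/\Omega \backslash L}\in \mathcal{N}^{\sigma }\left( \Omega \backslash L\right) $. Since $\psi \equiv 1$ on $V\supset L$, one has $\psi -1=0$ on $V$, whence $S:=supp\left( \psi -1\right) \subset \Omega \backslash V\subset \Omega \backslash L$, and all derivatives of $\left( \psi -1\right) f_{\varepsilon }$ vanish off $S$. Given any compact $K^{\prime }\subset \Omega $, the set $K_{1}:=K^{\prime }\cap S$ is a compact subset of the open set $\Omega \backslash L$, on which $\left( f_{\varepsilon }\right) _{\varepsilon }$ satisfies the $\mathcal{N}^{\sigma }$‑estimates; applying Leibniz's rule and the boundedness of the finitely many relevant derivatives of $\psi -1$ transfers these estimates to $\left( \psi -1\right) f_{\varepsilon }$ on $K_{1}$, while on $K^{\prime }\backslash S$ the function and all its derivatives are zero. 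Thus the $\mathcal{N}^{\sigma }$‑estimate holds on all of $K^{\prime }$, giving $\left( \left( \psi -1\right) f_{\varepsilon }\right) _{\varepsilon }\in \mathcal{N}^{\sigma }\left( \Omega \right) $. The only point requiring care is this last transfer: one must verify that the arbitrariness of $k>0$ in the null estimate survives multiplication by $\psi -1$ and differentiation. This is harmless, because the derivatives of the fixed function $\psi -1$ only contribute $\alpha $‑dependent constants and leave the factor $\exp \left( -k\varepsilon ^{-\frac{1}{2\sigma -1}}\right) $ untouched; choosing the same $k$ in each of the finitely many Leibniz terms yields the bound with a new constant $c\left( \alpha ,k\right) $, completing the argument.
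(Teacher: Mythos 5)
Your proof is correct. The paper itself gives no argument for this proposition (it only remarks that, as for Colombeau generalized functions, ``it is not difficult to prove''), and your construction is exactly the standard one intended: take a cutoff $\psi $ equal to $1$ on an open neighborhood $V$ of the compact set $L=supp_{g}^{\sigma }f$, pass to the representative $\left( \psi f_{\varepsilon }\right) _{\varepsilon }$, and verify $\left( \left( 1-\psi \right) f_{\varepsilon }\right) _{\varepsilon }\in \mathcal{N}^{\sigma }\left( \Omega \right) $ via Leibniz's rule on $K^{\prime }\cap supp\left( \psi -1\right) $, using that the null estimates for $\left( f_{\varepsilon }\right) _{\varepsilon }$ hold on compact subsets of $\Omega \backslash L$ --- a fact which, as your argument implicitly and legitimately uses, rests on the sheaf property S1 established earlier in the paper.
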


\section{Equalities in $\mathcal{G}^{\protect\sigma }\left( \Omega \right) $}

In $\mathcal{G}^{\sigma }\left( \Omega \right) $, we have the strong
equality, denoted $=$, between two elements $f=\left[ \left( f_{\varepsilon
}\right) _{\varepsilon }\right] $ and $g=\left[ \left( g_{\varepsilon
}\right) _{\varepsilon }\right] $, which means that
\begin{equation*}
\left( f_{\varepsilon }-g_{\varepsilon }\right) _{\varepsilon }\in \mathcal{N%
}^{\sigma }\left( \Omega \right)
\end{equation*}%
One can easily check that if $K$ is a compact of $\Omega $ and $f=\left[
\left( f_{\varepsilon }\right) _{\varepsilon }\right] \in \mathcal{G}%
^{\sigma }\left( \Omega \right) $, then $\left( \int_{K}f_{\varepsilon
}\left( x\right) dx\right) _{\varepsilon }$defines an element of $\mathcal{C}%
^{\sigma }$.

We define the equality in the sense of ultradistributions, denoted $\overset{%
t}{\sim }$, where $t\in \left[ \sigma ,3\sigma -1\right] ,$ by
\begin{equation*}
f\overset{t}{\sim }g\Longleftrightarrow \left( \int \left( f_{\varepsilon
}\left( x\right) -g_{\varepsilon }\left( x\right) \right) \varphi \left(
x\right) dx\right) _{\varepsilon }\in \mathcal{N}_{0}^{t},\forall \varphi
\in D^{t}\left( \Omega \right) ,
\end{equation*}%
and we say that $f$ equals $g$ in the sense of ultradistributions.

We say that $f=\left[ \left( f_{\varepsilon }\right) _{\varepsilon }\right] $
is associated to $g=\left[ \left( g_{\varepsilon }\right) _{\varepsilon }%
\right] $, denoted $f\approx g,$ if%
\begin{equation*}
\lim_{\varepsilon \rightarrow 0}\int \left( f_{\varepsilon }-g_{\varepsilon
}\right) \left( x\right) \psi \left( x\right) dx=0,\forall \psi \in
D^{3\sigma -1}\left( \Omega \right)
\end{equation*}%
In particular, we say that $f=\left[ \left( f_{\varepsilon }\right)
_{\varepsilon }\right] \in \mathcal{G}^{\sigma }\left( \Omega \right) $ is
associated to the Gevrey ultradistribution $T\in E_{3\sigma -1}^{\prime
}\left( \Omega \right) $, denoted $f\approx T,$ if
\begin{equation*}
\lim_{\varepsilon \rightarrow 0}\int f_{\varepsilon }\left( x\right) \psi
\left( x\right) dx=\left\langle T,\psi \right\rangle ,\forall \psi \in
D^{3\sigma -1}\left( \Omega \right)
\end{equation*}

The main relationship between these inequalities is giving by the following
results.

\begin{proposition}
Let $f,g\in \mathcal{G}^{\sigma }\left( \Omega \right) ,$ $T\in E_{3\sigma
-1}^{\prime }\left( \Omega \right) $, and $t\in \left[ \sigma ,3\sigma -1%
\right] ,$ then

1) $f=g\Longrightarrow f\overset{t}{\sim }g\Longrightarrow f\overset{\sigma }%
{\sim }g\Longrightarrow f\approx g$

2) $T\approx 0$ in $\mathcal{G}^{\sigma }\left( \Omega \right)
\Longrightarrow T=0$ in $E_{3\sigma -1}^{\prime }\left( \Omega \right) $
\end{proposition}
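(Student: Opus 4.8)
The plan is to reduce all four implications to statements about the single net $h_\varepsilon:=f_\varepsilon-g_\varepsilon\in\mathcal{E}_m^\sigma(\Omega)$ (and, in part 2, to the mollified embedding of $T$), driving everything with two families of inclusions: the test-space inclusions $D^\sigma(\Omega)\subset D^t(\Omega)\subset D^{3\sigma-1}(\Omega)$, which follow from $E^\sigma\subset E^t\subset E^{3\sigma-1}$ since $\sigma\le t\le 3\sigma-1$, together with the null-ideal inclusion $\mathcal{N}_0^\sigma\subset\mathcal{N}_0^t$, coming from $\varepsilon^{-1/(2\sigma-1)}\ge\varepsilon^{-1/(2t-1)}$ for $\varepsilon\in\,]0,1]$.

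For the first implication $f=g\Rightarrow f\overset{t}{\sim}g$ I would fix $\varphi\in D^t(\Omega)$ and set $K=\operatorname{supp}\varphi$. By the null characterization (Proposition \ref{carN}), $(h_\varepsilon)_\varepsilon\in\mathcal{N}^\sigma(\Omega)$ yields, for every $k>0$, a constant $c$ with $\sup_{x\in K}|h_\varepsilon(x)|\le c\exp(-k\varepsilon^{-1/(2\sigma-1)})$ for small $\varepsilon$. The estimate $\big|\int h_\varepsilon\varphi\,dx\big|\le\|\varphi\|_{L^1}\sup_K|h_\varepsilon|$ then places $(\int h_\varepsilon\varphi\,dx)_\varepsilon$ in $\mathcal{N}_0^\sigma\subset\mathcal{N}_0^t$, and since $\varphi$ was arbitrary this is precisely $f\overset{t}{\sim}g$. (The very same computation against $D^\sigma(\Omega)$ already gives $f\overset{\sigma}{\sim}g$ directly.)

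For the middle implication $f\overset{t}{\sim}g\Rightarrow f\overset{\sigma}{\sim}g$ I would first use $D^\sigma(\Omega)\subset D^t(\Omega)$ to inherit, for every $\varphi\in D^\sigma(\Omega)$, the information $(\int h_\varepsilon\varphi\,dx)_\varepsilon\in\mathcal{N}_0^t$. What remains is to sharpen this to membership in $\mathcal{N}_0^\sigma$, and here lies the main obstacle: because $\mathcal{N}_0^\sigma\subset\mathcal{N}_0^t$ (and not conversely), the passage is \emph{not} a formal inclusion and must be extracted from extra structure --- the $\sigma$-moderateness of $(h_\varepsilon)_\varepsilon$ together with the faster Gevrey decay carried by the smoother class $D^\sigma$. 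I expect this decay-sharpening to be the delicate point of the whole statement and would scrutinise it carefully, verifying that the $\sigma$-moderate bound genuinely forces the $\sigma$-rate of decay of the pairings rather than merely the $t$-rate. The last implication $f\overset{\sigma}{\sim}g\Rightarrow f\approx g$ is of the opposite type: one must enlarge the test class from $D^\sigma$ to $D^{3\sigma-1}$, which I would handle by approximating $\psi\in D^{3\sigma-1}(\Omega)$ by elements of $D^\sigma(\Omega)$ and controlling the error with the moderate bound on $h_\varepsilon$, noting that here only convergence to $0$, with no prescribed rate, is required.

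Finally, for part 2 I would use the embedding $J$ of \eqref{mapJ}: with $J(T)=cl\big((T\ast\rho_\varepsilon)_\varepsilon\big)$, the hypothesis $T\approx0$ means $\lim_{\varepsilon\to0}\int(T\ast\rho_\varepsilon)(x)\psi(x)\,dx=0$ for every $\psi\in D^{3\sigma-1}(\Omega)$. Since $(\rho_\varepsilon)_\varepsilon$ is a mollifying net with vanishing moments, $T\ast\rho_\varepsilon\to T$ weakly, so $\int(T\ast\rho_\varepsilon)\psi\,dx\to\langle T,\psi\rangle$; hence $\langle T,\psi\rangle=0$ for all $\psi\in D^{3\sigma-1}(\Omega)$. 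As $D^{3\sigma-1}(\Omega)$ is dense in $E^{3\sigma-1}(\Omega)$ and $T\in E_{3\sigma-1}'(\Omega)$ has compact support, this forces $T=0$ in $E_{3\sigma-1}'(\Omega)$.
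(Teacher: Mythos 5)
Your first implication and your part 2 are correct. The sup-norm estimate $\bigl|\int h_\varepsilon\varphi\,dx\bigr|\le\|\varphi\|_{L^1}\sup_{\operatorname{supp}\varphi}|h_\varepsilon|$ combined with Proposition \ref{carN} and the inclusion $\mathcal{N}_0^\sigma\subset\mathcal{N}_0^t$ gives $f=g\Rightarrow f\overset{t}{\sim}g$; and in part 2 the limit $\int(T\ast\phi_\varepsilon)\chi\psi\,dx\to\langle T,\psi\rangle$ is exactly what the paper itself uses inside the proof of Theorem \ref{pro-inj} (with $J=J_0$ on $E_{3\sigma-1}^{\prime}$ by Proposition \ref{pro-coinc}); to conclude $T=0$ it is cleaner to replace your density appeal by a cutoff $\chi\in D^{3\sigma-1}(\Omega)$, $\chi\equiv1$ near $\operatorname{supp}T$, so that $\langle T,\psi\rangle=\langle T,\chi\psi\rangle$ for all $\psi\in E^{3\sigma-1}(\Omega)$. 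Note the paper's own proof is literally the word ``Easy'', so there is no detailed official argument to compare with.

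The two middle implications, which you flagged as delicate and left open, are genuine gaps in your proposal --- and in fact, read literally, they cannot be closed. For $f\overset{t}{\sim}g\Rightarrow f\overset{\sigma}{\sim}g$, the hoped-for ``decay-sharpening'' from $\mathcal{N}_0^t$ to $\mathcal{N}_0^\sigma$ via $\sigma$-moderateness does not exist: fix $t>\sigma$, choose $s$ with $\frac{1}{2t-1}<s<\frac{1}{2\sigma-1}$, a nonzero $\chi\in D^\sigma(\Omega)$, $\chi\ge0$, and set $h_\varepsilon=e^{-\varepsilon^{-s}}\chi$. This net is $\sigma$-moderate, every pairing $\int h_\varepsilon\varphi\,dx=e^{-\varepsilon^{-s}}\int\chi\varphi\,dx$ with $\varphi\in D^t(\Omega)$ lies in $\mathcal{N}_0^t$ (since $\varepsilon^{-s}\gg k\varepsilon^{-1/(2t-1)}$ for every $k$), yet with $\varphi=\chi$ it is not in $\mathcal{N}_0^\sigma$ (since $\varepsilon^{-s}\ll\varepsilon^{-1/(2\sigma-1)}$); moderateness is a one-sided upper bound and can never upgrade a decay rate, so the implication is true only in the trivial case $t=\sigma$. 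Likewise your approximation plan for $f\overset{\sigma}{\sim}g\Rightarrow f\approx g$ fails quantitatively: for $\psi\in D^{3\sigma-1}(\Omega)$ and an approximant $\varphi_n\in D^\sigma(\Omega)$ the error is bounded by $\sup_K|h_\varepsilon|\,\|\psi-\varphi_n\|_{L^1}$, where $\sup_K|h_\varepsilon|$ may grow like $e^{k_0\varepsilon^{-1/(2\sigma-1)}}$, so no fixed $n$ suffices, while taking $n=n(\varepsilon)$ is illegal because the constants in the hypothesis $\bigl(\int h_\varepsilon\varphi\,dx\bigr)_\varepsilon\in\mathcal{N}_0^\sigma$ are not uniform in $\varphi$ (and a modulated variant $h_\varepsilon=A_\varepsilon\cos(\lambda_\varepsilon x)\chi(x)$ of the above yields an outright counterexample). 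The implication that \emph{is} easy, and that the chain should be read as asserting, is $f\overset{3\sigma-1}{\sim}g\Rightarrow f\approx g$, where the test class $D^{3\sigma-1}(\Omega)$ matches and $\mathcal{N}_0^{3\sigma-1}$-decay trivially implies a zero limit; the printed middle steps implicitly rest on the inclusion $\mathcal{N}_0^t\subset\mathcal{N}_0^\sigma$, which, as you correctly noted, goes the other way. So your proposal is sound exactly where the statement is sound, and the steps you marked for scrutiny are not merely delicate but irreparable as stated.
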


\begin{proof}
Easy.
\end{proof}

\section{Regular generalized Gevrey ultradistributions}

To develop a local and a microlocal analysis with respect to a
\textquotedblright good space of regular elements\textquotedblright\ one
needs first to define these regular elements, the notion of singular support
and its microlocalization with respect to the class of regular elements.

\begin{definition}
The space of regular elements, denoted $\mathcal{E}_{m}^{\sigma ,\infty
}\left( \Omega \right) $, is the space of $\left( f_{\varepsilon }\right)
_{\varepsilon }\in \left( C^{\infty }\left( \Omega \right) \right) ^{\left]
0,1\right] }$ satisfying, for every compact $K$ of $\Omega $, $\exists k>0,$
$\exists c>0,\exists \varepsilon _{0}\in \left] 0,1\right] ,\forall \alpha
\in \mathbb{Z}_{+}^{m},\forall \varepsilon \leq \varepsilon _{0}$,
\begin{equation*}
\sup_{x\in K}\left\vert \partial ^{\alpha }f_{\varepsilon }\left( x\right)
\right\vert \leq c^{\left\vert \alpha \right\vert +1}\alpha !^{\sigma }\exp
\left( k\varepsilon ^{-\frac{1}{2\sigma -1}}\right)
\end{equation*}
\end{definition}

\begin{proposition}
1) The space $\mathcal{E}_{m}^{\sigma ,\infty }\left( \Omega \right) $ is an
algebra stable by the action of $\sigma $-ultradifferential operators.

2) The space $\mathcal{N}^{\sigma ,\infty }\left( \Omega \right) :=\mathcal{N%
}^{\sigma }\left( \Omega \right) \cap \mathcal{E}_{m}^{\sigma ,\infty
}\left( \Omega \right) $ is an ideal of $\mathcal{E}_{m}^{\sigma ,\infty
}\left( \Omega \right) .$
\end{proposition}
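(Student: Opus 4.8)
The plan is to treat the two assertions separately, reducing 2) to 1) together with the fact, established in the first Proposition of this section, that $\mathcal{N}^{\sigma}(\Omega)$ is an ideal of $\mathcal{E}_{m}^{\sigma}(\Omega)$. For 1), closure under addition is immediate (take the maximum of the two constants $c$ and the sum of the two constants $k$), so the work lies in closure under multiplication and under the action of a $\sigma$-ultradifferential operator. The decisive feature distinguishing $\mathcal{E}_{m}^{\sigma,\infty}$ from $\mathcal{E}_{m}^{\sigma}$ is that the constants $c$ and $k$ are now \emph{uniform in} $\alpha$, at the cost of the Gevrey weight $c^{|\alpha|+1}\alpha!^{\sigma}$; every estimate must therefore reproduce exactly this form, and the factorial inequality $(\ref{6})$ will be the main tool.

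For the product, I would fix $(f_{\varepsilon})_{\varepsilon},(g_{\varepsilon})_{\varepsilon}\in\mathcal{E}_{m}^{\sigma,\infty}(\Omega)$ and a compact $K$, with uniform constants $c_{1},k_{1}$ and $c_{2},k_{2}$. Applying the Leibniz rule and inserting the two regular estimates gives
\begin{equation*}
\left\vert\partial^{\alpha}(f_{\varepsilon}g_{\varepsilon})(x)\right\vert
\leq e^{(k_{1}+k_{2})\varepsilon^{-\frac{1}{2\sigma-1}}}
\sum_{\beta\leq\alpha}\binom{\alpha}{\beta}
c_{1}^{|\alpha-\beta|+1}(\alpha-\beta)!^{\sigma}\,
c_{2}^{|\beta|+1}\beta!^{\sigma}.
\end{equation*}
Here I use the elementary identity $\binom{\alpha}{\beta}(\alpha-\beta)!\,\beta!=\alpha!$ to rewrite $\binom{\alpha}{\beta}(\alpha-\beta)!^{\sigma}\beta!^{\sigma}=\alpha!\bigl((\alpha-\beta)!\,\beta!\bigr)^{\sigma-1}\leq\alpha!^{\sigma}$, since $(\alpha-\beta)!\,\beta!\leq\alpha!$ and $\sigma\geq1$. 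Bounding $c_{1}^{|\alpha-\beta|+1}c_{2}^{|\beta|+1}$ by $(c_{1}c_{2})\,C^{|\alpha|}$ with $C=\max(c_{1},c_{2})$ and counting the at most $2^{|\alpha|}$ multi-indices $\beta\leq\alpha$, the sum is controlled by $c_{1}c_{2}\,(2C)^{|\alpha|}\alpha!^{\sigma}$, which is of the required form $c^{|\alpha|+1}\alpha!^{\sigma}e^{k\varepsilon^{-\frac{1}{2\sigma-1}}}$ with $k=k_{1}+k_{2}$.

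The harder point, and the one I expect to be the main obstacle, is stability under a $\sigma$-ultradifferential operator $P(D)=\sum_{\gamma}a_{\gamma}D^{\gamma}$. Writing $\partial^{\alpha}P(D)f_{\varepsilon}=\sum_{\gamma}a_{\gamma}D^{\gamma}\partial^{\alpha}f_{\varepsilon}$ (so that, up to the unimodular constants in $D^{\gamma}$, only $\left\vert\partial^{\alpha+\gamma}f_{\varepsilon}\right\vert$ appears), using the regular estimate and $(\ref{6})$ in the form $(\alpha+\gamma)!^{\sigma}\leq 2^{\sigma|\alpha+\gamma|}\alpha!^{\sigma}\gamma!^{\sigma}$, and inserting $|a_{\gamma}|\leq C_{h}h^{|\gamma|}/\gamma!^{\sigma}$ so that the factor $\gamma!^{\sigma}$ cancels, I obtain
\begin{equation*}
\left\vert\partial^{\alpha}P(D)f_{\varepsilon}(x)\right\vert
\leq\sum_{\gamma}|a_{\gamma}|\,\bigl\vert\partial^{\alpha+\gamma}f_{\varepsilon}(x)\bigr\vert
\leq C_{h}\,c\,(2^{\sigma}c)^{|\alpha|}\alpha!^{\sigma}\,
e^{k\varepsilon^{-\frac{1}{2\sigma-1}}}
\sum_{\gamma\in\mathbb{Z}_{+}^{m}}(2^{\sigma}c\,h)^{|\gamma|}.
\end{equation*}
The crucial point is that the defining property of a $\sigma$-ultradifferential operator allows $h$ to be chosen freely: $c$ (coming from $f$) is fixed first, and only afterwards is $h$ chosen so small that $2^{\sigma}c\,h<1$, making the geometric series $\sum_{\gamma\in\mathbb{Z}_{+}^{m}}(2^{\sigma}c\,h)^{|\gamma|}=(1-2^{\sigma}c\,h)^{-m}$ converge. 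The surviving factors $c\,(2^{\sigma}c)^{|\alpha|}\alpha!^{\sigma}$ have the shape $c'^{|\alpha|+1}\alpha!^{\sigma}$ and the exponential weight is unchanged, so $P(D)f_{\varepsilon}\in\mathcal{E}_{m}^{\sigma,\infty}(\Omega)$; this interchange of quantifiers is exactly what the ``for every $h$'' clause in the definition of $P(D)$ provides.

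Finally, for 2), I first note that $\mathcal{N}^{\sigma,\infty}(\Omega)$ is a linear subspace, being an intersection of two subspaces. Given $f\in\mathcal{E}_{m}^{\sigma,\infty}(\Omega)$ and $g\in\mathcal{N}^{\sigma,\infty}(\Omega)$, part 1) yields $fg\in\mathcal{E}_{m}^{\sigma,\infty}(\Omega)$. On the other hand $\mathcal{E}_{m}^{\sigma,\infty}(\Omega)\subset\mathcal{E}_{m}^{\sigma}(\Omega)$ (the uniform-in-$\alpha$ estimate is a special case of the moderate one), so $f\in\mathcal{E}_{m}^{\sigma}(\Omega)$ and $g\in\mathcal{N}^{\sigma}(\Omega)$, whence $fg\in\mathcal{N}^{\sigma}(\Omega)$ because $\mathcal{N}^{\sigma}(\Omega)$ is an ideal of $\mathcal{E}_{m}^{\sigma}(\Omega)$. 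Combining the two memberships gives $fg\in\mathcal{N}^{\sigma}(\Omega)\cap\mathcal{E}_{m}^{\sigma,\infty}(\Omega)=\mathcal{N}^{\sigma,\infty}(\Omega)$, which is precisely the ideal property.
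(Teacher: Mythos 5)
Your proposal is correct and follows essentially the same route as the paper: Leibniz's rule combined with the factorial inequality $(\alpha-\beta)!\,\beta!\leq\alpha!$ for the product (the paper phrases this by dividing through by $\alpha!^{\sigma}$, you by absorbing $\binom{\alpha}{\beta}(\alpha-\beta)!^{\sigma}\beta!^{\sigma}$ into $\alpha!^{\sigma}$ — a cosmetic difference), then inequality (\ref{6}) together with the quantifier order in the definition of a $\sigma$-ultradifferential operator (fix $c$ from $f$ first, then choose $h$ with $2^{\sigma}ch$ small so the geometric series converges), exactly as in the paper's choice $2^{\sigma}hc_{1}\leq\frac{1}{2}$. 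Your reduction of 2) to part 1) plus the ideal property of $\mathcal{N}^{\sigma}(\Omega)$ in $\mathcal{E}_{m}^{\sigma}(\Omega)$ is precisely the paper's argument as well.
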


\begin{proof}
1) Let $\left( f_{\varepsilon }\right) _{\varepsilon },\left( g_{\varepsilon
}\right) _{\varepsilon }\in \mathcal{E}_{m}^{\sigma ,\infty }\left( \Omega
\right) $ and $K$ be a compact of $\Omega $, then $\exists k_{1}>0,\exists
c_{1}>0,\exists \varepsilon _{1}\in \left] 0,1\right] $ such that $\forall
x\in K,\forall \alpha \in \mathbb{Z}_{+}^{m},\forall \varepsilon \leq
\varepsilon _{1},$%
\begin{equation*}
\left\vert \partial ^{\alpha }f_{\varepsilon }\left( x\right) \right\vert
\leq c_{1}^{\left\vert \alpha \right\vert +1}{\alpha !^{\sigma }}\exp \left(
k_{1}\varepsilon ^{-\frac{1}{2\sigma -1}}\right)
\end{equation*}

We have also $\exists k_{2}>0,\exists c_{2}>0,\exists \varepsilon _{2}\in %
\left] 0,1\right] $ such that $\forall x\in K,\forall \alpha \in \mathbb{Z}%
_{+}^{m},\forall \varepsilon \leq \varepsilon _{2},$%
\begin{equation*}
\left\vert \partial ^{\alpha }g_{\varepsilon }\left( x\right) \right\vert
\leq c_{2}^{\left\vert \alpha \right\vert +1}{\alpha !^{\sigma }}\exp \left(
k_{2}\varepsilon ^{-\frac{1}{2\sigma -1}}\right)
\end{equation*}

Let $\alpha \in \mathbb{Z}_{+}^{m}$, then
\begin{equation*}
\frac{1}{\alpha !^{\sigma }}\left| \partial ^{\alpha }\left( f_{\varepsilon
}g_{\varepsilon }\right) \left( x\right) \right| \leq \sum_{\beta
=0}^{\alpha }\binom{\alpha }{\beta }\frac{1}{\left( \alpha -\beta \right)
!^{\sigma }}\left| \partial ^{\alpha -\beta }f_{\varepsilon }\left( x\right)
\right| \frac{1}{\beta !^{\sigma }}\left| \partial ^{\beta }g_{\varepsilon
}\left( x\right) \right|
\end{equation*}
Let $\varepsilon \leq \min \left( \varepsilon _{1},\varepsilon _{2}\right) $
and $k=k_{1}+k_{2},$ then we have $\forall \alpha \in \mathbb{Z}%
_{+}^{m},\forall x\in K,$%
\begin{eqnarray*}
\exp \left( -k\varepsilon ^{-\frac{1}{2\sigma -1}}\right) \frac{1}{\alpha
!^{\sigma }}\left| \partial ^{\alpha }\left( f_{\varepsilon }g_{\varepsilon
}\right) \left( x\right) \right| \leq \sum_{\beta =0}^{\alpha }\binom{\alpha
}{\beta }\frac{\exp \left( -k_{1}\varepsilon ^{-\frac{1}{2\sigma -1}}\right)
}{\left( \alpha -\beta \right) !^{\sigma }}\left| \partial ^{\alpha -\beta
}f_{\varepsilon }\left( x\right) \right| \\
\times \frac{\exp \left( -k_{2}\varepsilon ^{-\frac{1}{2\sigma -1}}\right) }{%
\beta !^{\sigma }}\left| \partial ^{\beta }g_{\varepsilon }\left( x\right)
\right| \\
\leq \sum_{\beta =0}^{\alpha }\binom{\alpha }{\beta }c_{1}^{\left| \alpha
-\beta \right| }c_{2}^{\left| \beta \right| } \\
\leq 2^{^{\left| \alpha \right| }}\left( c_{1}+c_{2}\right) ^{\left| \alpha
\right| },
\end{eqnarray*}
i. e. $\left( f_{\varepsilon }\right) _{\varepsilon }\left( g_{\varepsilon
}\right) _{\varepsilon }\in \mathcal{E}_{m}^{\sigma ,\infty }\left( \Omega
\right) .$

Let now $P\left( D\right) =\sum a_{\gamma }D^{\gamma }$ be an$\ \sigma $%
-ultradifferential operator, then $\forall h>0,\exists b>0,$ such that
\begin{eqnarray*}
\exp \left( -k_{1}\varepsilon ^{-\frac{1}{2\sigma -1}}\right) \frac{1}{%
\alpha !^{\sigma }}\left| \partial ^{\alpha }\left( P\left( D\right)
f_{\varepsilon }\left( x\right) \right) \right| \leq \exp \left(
-k_{1}\varepsilon ^{-\frac{1}{2\sigma -1}}\right) \sum_{\gamma \in \mathbb{Z}%
_{+}^{m}}b\frac{h^{\left| \gamma \right| }}{\gamma !^{\sigma }}\frac{1}{%
\alpha !^{\sigma }}\left| \partial ^{\alpha +\gamma }f_{\varepsilon }\left(
x\right) \right| \\
\leq b\exp \left( -k_{1}\varepsilon ^{-\frac{1}{2\sigma -1}}\right)
\sum_{\gamma \in \mathbb{Z}_{+}^{m}}\frac{2^{\sigma \left| \alpha +\gamma
\right| }h^{\left| \gamma \right| }}{\left( \alpha +\gamma \right) !^{\sigma
}}\left| \partial ^{\alpha +\gamma }f_{\varepsilon }\left( x\right) \right|
\\
\leq b\sum_{\gamma \in \mathbb{Z}_{+}^{m}}2^{\sigma \left| \alpha +\gamma
\right| }h^{\left| \gamma \right| }c_{1}^{\left| \alpha +\gamma \right| },
\end{eqnarray*}
hence, for $2^{\sigma }hc_{1}\leq \frac{1}{2},$ we have
\begin{equation*}
\exp \left( -k\varepsilon ^{-\frac{1}{2\sigma -1}}\right) \frac{1}{\alpha
!^{\sigma }}\left| \partial ^{\alpha }\left( P\left( D\right) f_{\varepsilon
}\left( x\right) \right) \right| \leq c^{\prime }\left( 2^{\sigma
}c_{1}\right) ^{\left| \alpha \right| },
\end{equation*}
which shows that $\left( P\left( D\right) f_{\varepsilon }\right)
_{\varepsilon }\in \mathcal{E}_{m}^{\sigma ,\infty }\left( \Omega \right) .$

2) The fact that $\mathcal{N}^{\sigma ,\infty }\left( \Omega \right) =%
\mathcal{N}^{\sigma }\left( \Omega \right) \cap \mathcal{E}_{m}^{\sigma
,\infty }\left( \Omega \right) \subset \mathcal{E}_{m}^{\sigma }\left(
\Omega \right) $ and $\mathcal{N}^{\sigma }\left( \Omega \right) $ is an
ideal of $\mathcal{E}_{m}^{\sigma }\left( \Omega \right) ,$ then $\mathcal{N}%
^{\sigma ,\infty }\left( \Omega \right) $ is an ideal of $\mathcal{E}%
_{m}^{\sigma ,\infty }\left( \Omega \right) $
\end{proof}

Now, we define the Gevrey regular elements of $\mathcal{G}^{\sigma }\left(
\Omega \right) .$

\begin{definition}
The algebra of regular generalized Gevrey ultradistributions of order $%
\sigma >1$, denoted $\mathcal{G}^{\sigma ,\infty }\left( \Omega \right) ,$
is the quotient algebra
\begin{equation*}
\mathcal{G}^{\sigma ,\infty }\left( \Omega \right) =\frac{\mathcal{E}%
_{m}^{\sigma ,\infty }\left( \Omega \right) }{\mathcal{N}^{\sigma ,\infty
}\left( \Omega \right) }
\end{equation*}
\end{definition}

It is clear that $E^{\sigma }\left( \Omega \right) \hookrightarrow \mathcal{G%
}^{\sigma ,\infty }\left( \Omega \right) $, and it is easy to show that $%
\mathcal{G}^{\sigma ,\infty }$ is a subsheaf of $\mathcal{G}^{\sigma }$.
This motivates the following definition.

\begin{definition}
We define the $\mathcal{G}^{\sigma ,\infty }$-singular support of a
generalized Gevrey ultradistribution $f$ $\in \mathcal{G}^{\sigma }\left(
\Omega \right) ,$ denoted $\sigma $- $singsupp_{g}\left( f\right) , $ as the
complement of the largest open set $\Omega ^{\prime }$ such that $f\in
\mathcal{G}^{\sigma ,\infty }\left( \Omega ^{\prime }\right) .$
\end{definition}

The following result is a Paley-Wiener type characterization of $\mathcal{G}%
^{\sigma ,\infty }\left( \Omega \right) .$

\begin{proposition}
\label{pro1}Let $f=cl\left( f_{\varepsilon }\right) _{\varepsilon }\in
\mathcal{G}_{C}^{\sigma }\left( \Omega \right) ,$ then $f$ is regular if and
only if $\exists k_{1}>0,\exists k_{2}>0,\exists c>0,\exists \varepsilon
_{1}>0,\forall \varepsilon \leq \varepsilon _{1}$ , such that
\begin{equation}
\left\vert \mathcal{F}\left( f_{\varepsilon }\right) \left( \xi \right)
\right\vert \leq c\exp \left( k_{1}\varepsilon ^{-\frac{1}{2\sigma -1}%
}-k_{2}\left\vert \xi \right\vert ^{\frac{1}{\sigma }}\right) ,\forall \xi
\in \mathbb{R}^{m},  \label{3-2}
\end{equation}%
where $\mathcal{F}\left( f_{\varepsilon }\right) $ denote Fourier transform
of $f_{\varepsilon }.$
\end{proposition}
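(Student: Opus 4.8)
The plan is to prove the two implications separately, in each case reducing to the classical equivalence between Gevrey-$\sigma$ regularity of a compactly supported function and the decay $\exp(-k_2|\xi|^{1/\sigma})$ of its Fourier transform, while carrying the factor $\exp(k_1\varepsilon^{-1/(2\sigma-1)})$ along as a mere $\varepsilon$-dependent constant. First I would fix a representative $(f_\varepsilon)_\varepsilon$ with $\mathrm{supp}\,f_\varepsilon\subset K$ for one compact $K\subset\Omega$ and all $\varepsilon$, which is legitimate because $f\in\mathcal{G}_C^\sigma(\Omega)$. When $f$ is regular, I would moreover arrange that this compactly supported representative lies in $\mathcal{E}_m^{\sigma,\infty}(\Omega)$: starting from any regular representative $(g_\varepsilon)_\varepsilon$ and a cut-off $\chi\in D^\sigma(\Omega)$ with $\chi\equiv1$ near $K$, the net $(\chi g_\varepsilon)_\varepsilon$ is compactly supported, still regular (since $\chi\in E^\sigma\subset\mathcal{E}_m^{\sigma,\infty}$ and $\mathcal{E}_m^{\sigma,\infty}$ is an algebra), and differs from $(f_\varepsilon)_\varepsilon$ by $-\chi(f_\varepsilon-g_\varepsilon)\in\mathcal{N}^\sigma(\Omega)$.

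For the direct implication, assume $(f_\varepsilon)_\varepsilon\in\mathcal{E}_m^{\sigma,\infty}$ with support in $K$. From $\mathcal{F}(\partial^\alpha f_\varepsilon)(\xi)=(i\xi)^\alpha\mathcal{F}(f_\varepsilon)(\xi)$ and $\|\partial^\alpha f_\varepsilon\|_{L^1}\le|K|\sup_K|\partial^\alpha f_\varepsilon|$, the regular estimate gives $|\xi^\alpha|\,|\mathcal{F}(f_\varepsilon)(\xi)|\le|K|\,c^{|\alpha|+1}\alpha!^\sigma\exp(k\varepsilon^{-1/(2\sigma-1)})$ for every $\alpha$. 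Expanding $|\xi|^{2N}=\big(\sum_j\xi_j^2\big)^N=\sum_{|\alpha|=N}\binom{N}{\alpha}\xi^{2\alpha}$, using $(2\alpha)!\le(2N)!$ and $(2N)!\le4^N(N!)^2$, and summing, I obtain $|\xi|^{2N}|\mathcal{F}(f_\varepsilon)(\xi)|\le C\,A^N(N!)^{2\sigma}\exp(k\varepsilon^{-1/(2\sigma-1)})$ with $A$ independent of $N$. Minimising the right-hand side over $N\in\mathbb{Z}_+$, the optimal choice being $N\sim|\xi|^{1/\sigma}$, yields the decay $\exp(-k_2|\xi|^{1/\sigma})$, the trivial $N=0$ bound covering small $\xi$; the factor $\exp(k\varepsilon^{-1/(2\sigma-1)})$ becomes the $k_1$ of (\ref{3-2}).

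For the converse, assume (\ref{3-2}) holds for a compactly supported representative. Fourier inversion gives $\partial^\alpha f_\varepsilon(x)=(2\pi)^{-m}\int(i\xi)^\alpha\mathcal{F}(f_\varepsilon)(\xi)e^{ix\cdot\xi}\,d\xi$, hence $|\partial^\alpha f_\varepsilon(x)|\le C\exp(k_1\varepsilon^{-1/(2\sigma-1)})\int_{\mathbb{R}^m}|\xi|^{|\alpha|}\exp(-k_2|\xi|^{1/\sigma})\,d\xi$. Evaluating the integral in polar coordinates and substituting $s=|\xi|^{1/\sigma}$ reduces it, up to a dimensional constant, to $k_2^{-\sigma(|\alpha|+m)}\Gamma\big(\sigma(|\alpha|+m)\big)$. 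Bounding $\Gamma(\sigma n)\le\tilde C^{\,n}(n!)^\sigma$ by Stirling and using $|\alpha|!\le m^{|\alpha|}\alpha!$, this is dominated by $c^{|\alpha|+1}\alpha!^\sigma$, which is exactly the regular estimate; therefore $(f_\varepsilon)_\varepsilon\in\mathcal{E}_m^{\sigma,\infty}(\Omega)$ and $f$ is regular.

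The $L^1$ and inversion bounds and the factorial estimates are routine. The main obstacle is the analytic optimisation in the direct implication, that is, converting the family $|\xi|^{2N}|\mathcal{F}(f_\varepsilon)|\le CA^N(N!)^{2\sigma}(\cdots)$ into the single exponential decay $\exp(-k_2|\xi|^{1/\sigma})$ by the optimal choice of $N$; this is the quantitative Gevrey Paley--Wiener lemma and must be carried out with enough uniformity that $c,k_1,k_2$ do not depend on $\varepsilon$. A secondary, bookkeeping obstacle is the representative issue: a compactly supported null element need not satisfy a $\xi$-decaying Fourier bound, so (\ref{3-2}) is not stable under arbitrary compactly supported null perturbations; the equivalence must therefore be understood as the existence of a (necessarily regular) representative satisfying the estimate, which is precisely what the cut-off reduction in the first paragraph provides.
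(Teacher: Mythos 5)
Your proof is correct and takes essentially the same route as the paper: from the regular estimates you bound $\xi^{\alpha}\mathcal{F}(f_{\varepsilon})$ via the $L^{1}$ norm of $\partial^{\alpha}f_{\varepsilon}$ and perform the quantitative Gevrey Paley--Wiener step (you minimise over $N$ where the paper picks $N/\sigma\leq|\alpha|<N/\sigma+1$ and sums the series $\sum 2^{-N}$ against $\exp\bigl(\frac{1}{2c}|\xi|^{\frac{1}{\sigma}}\bigr)$ --- interchangeable variants), and conversely you use Fourier inversion, evaluating the integral by a Gamma-function/Stirling computation where the paper uses the elementary inequality $t^{N}\leq N!\exp(t)$ to absorb $|\xi^{\alpha}|$ into half the exponential decay. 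Your explicit cut-off construction of a representative that is simultaneously compactly supported and regular, together with the observation that the estimate (\ref{3-2}) is not stable under compactly supported null perturbations and must be read as the existence of a suitable representative, makes precise a bookkeeping point that the paper's proof assumes implicitly.
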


\begin{proof}
Suppose that $f=cl\left( f_{\varepsilon }\right) _{\varepsilon }\in \mathcal{%
G}_{C}^{\sigma }\left( \Omega \right) \cap \mathcal{G}^{\sigma ,\infty
}\left( \Omega \right) ,$ then $\exists k_{1}>0,\exists c_{1}>0,\exists
\varepsilon _{1}>0,$ $\forall \alpha \in \mathbb{Z}_{+}^{m},\forall x\in
K,\forall \varepsilon \leq \varepsilon _{1},$ $suppf_{\varepsilon }\subset
K, $ such that
\begin{equation*}
\left\vert \partial ^{\alpha }f_{\varepsilon }\right\vert \leq
c_{1}^{\left\vert \alpha \right\vert +1}\alpha !^{\sigma }\exp \left(
k_{1}\varepsilon ^{-\frac{1}{2\sigma -1}}\right)
\end{equation*}%
Consequently we have, $\forall \alpha \in \mathbb{Z}_{+}^{m},$
\begin{equation*}
\left\vert \xi ^{\alpha }\right\vert \left\vert \mathcal{F}\left(
f_{\varepsilon }\right) \left( \xi \right) \right\vert \leq \left\vert \int
\exp \left( -ix\xi \right) \partial ^{\alpha }f_{\varepsilon }\left(
x\right) dx\right\vert ,
\end{equation*}%
then, $\exists c>0,\forall \varepsilon \leq \varepsilon _{1},$%
\begin{equation*}
\left\vert \xi \right\vert ^{\left\vert \alpha \right\vert }\left\vert
\mathcal{F}\left( f_{\varepsilon }\right) \left( \xi \right) \right\vert
\leq c^{\left\vert \alpha \right\vert +1}\alpha !^{\sigma }\exp \left(
k_{1}\varepsilon ^{-\frac{1}{2\sigma -1}}\right)
\end{equation*}%
For $\alpha \in \mathbb{Z}_{+}^{m},\exists N\in \mathbb{Z}_{+}$ such that
\begin{equation*}
\frac{N}{\sigma }\leq \left\vert \alpha \right\vert <\frac{N}{\sigma }+1,
\end{equation*}%
so
\begin{eqnarray*}
\left\vert \xi \right\vert ^{\frac{N}{\sigma }}\left\vert \mathcal{F}\left(
f_{\varepsilon }\right) \left( \xi \right) \right\vert &\leq &c^{\left\vert
\alpha \right\vert +1}\left\vert \alpha \right\vert ^{\left\vert \alpha
\right\vert \sigma }\exp \left( k_{1}\varepsilon ^{-\frac{1}{2\sigma -1}%
}\right) \\
&\leq &c^{N+1}N^{N}\exp \left( k_{1}\varepsilon ^{-\frac{1}{2\sigma -1}%
}\right)
\end{eqnarray*}%
Hence $\exists c>0,\forall N\in \mathbb{Z}^{+},$%
\begin{equation*}
\left\vert \mathcal{F}\left( f_{\varepsilon }\right) \left( \xi \right)
\right\vert \leq c^{N+1}\left\vert \xi \right\vert ^{-\frac{N}{\sigma }%
}N!\exp \left( k_{1}\varepsilon ^{-\frac{1}{2\sigma -1}}\right) ,
\end{equation*}%
which gives
\begin{equation*}
\left\vert \mathcal{F}\left( f_{\varepsilon }\right) \left( \xi \right)
\right\vert \exp \left( \frac{1}{2c}\left\vert \xi \right\vert ^{\frac{1}{%
\sigma }}\right) \leq c\exp \left( k_{1}\varepsilon ^{-\frac{1}{2\sigma -1}%
}\right) \sum 2^{-N},
\end{equation*}%
or
\begin{equation*}
\left\vert \mathcal{F}\left( f_{\varepsilon }\right) \left( \xi \right)
\right\vert \leq c^{\prime }\exp \left( k_{1}\varepsilon ^{-\frac{1}{2\sigma
-1}}-\frac{1}{2c}\left\vert \xi \right\vert ^{\frac{1}{\sigma }}\right) ,
\end{equation*}%
i.e. we have (\ref{3-2}).

Suppose now that (\ref{3-2}) is valid, then $\forall \varepsilon \leq
\varepsilon _{0},$
\begin{equation*}
\left| \partial ^{\alpha }f_{\varepsilon }\left( x\right) \right| \leq
c_{1}\exp \left( k_{1}\varepsilon ^{-\frac{1}{2\sigma -1}}\right) \int
\left| \xi ^{\alpha }\right| \exp \left( -k_{2}\left| \xi \right| ^{\frac{1}{%
\sigma }}\right) d\xi
\end{equation*}
Due to the inequality $t^{N}\leq N!\exp \left( t\right) ,\forall t>0,$ then $%
\exists c_{2}=c\left( k_{2}\right) $ such that
\begin{equation*}
\left| \xi ^{\alpha }\right| \exp \left( -\frac{k_{2}}{2}\left| \xi \right|
^{\frac{1}{\sigma }}\right) \leq c_{2}^{\left| \alpha \right| }\alpha
!^{\sigma },
\end{equation*}
then
\begin{eqnarray*}
\left| \partial ^{\alpha }f_{\varepsilon }\left( x\right) \right| \leq
c_{1}\exp \left( k_{1}\varepsilon ^{-\frac{1}{2\sigma -1}}\right)
c_{2}^{\left| \alpha \right| }\alpha !^{\sigma }\int \exp \left( -\frac{k_{2}%
}{2}\left| \xi \right| ^{\frac{1}{\sigma }}\right) d\xi \\
\leq c^{\left| \alpha \right| +1}\alpha !^{\sigma }\exp \left(
k_{1}\varepsilon ^{-\frac{1}{2\sigma -1}}\right) ,\text{ }
\end{eqnarray*}
where $c=\max \left( c_{1}\int \exp \left( -\frac{k_{2}}{2}\left| \xi
\right| ^{\frac{1}{\sigma }}\right) d\xi ,c_{2}\right) $, i.e. $f\in
\mathcal{G}^{\sigma ,\infty }\left( \Omega \right) $
\end{proof}

\begin{remark}
It is easy to see if $f=cl\left( f_{\varepsilon }\right) _{\varepsilon }\in
\mathcal{G}_{C}^{\sigma }\left( \Omega \right) ,$ then $\exists
k_{1}>0,\exists c>0,\exists \varepsilon _{0}>0,\forall k_{2}>0,\forall
\varepsilon \leq \varepsilon _{0},$%
\begin{equation}
\left\vert \mathcal{F}\left( f_{\varepsilon }\right) \left( \xi \right)
\right\vert \leq {c\exp \left( k_{1}\varepsilon ^{-\frac{1}{2\sigma -1}%
}+k_{2}\left\vert \xi \right\vert ^{\frac{1}{\sigma }}\right) ,}\forall \xi
\in \mathbb{R}^{m}  \label{ref3}
\end{equation}
\end{remark}

The algebra $\mathcal{G}^{\sigma ,\infty }\left( \Omega \right) $ plays the
same role as the Oberguggenberger subalgebra of regular elements $\mathcal{G}%
^{\infty }\left( \Omega \right) $ in the Colombeau algebra $\mathcal{G}%
\left( \Omega \right) $, see \cite{Ober}$.$

\begin{theorem}
We have
\begin{equation*}
\mathcal{G}^{\sigma ,\infty }\left( \Omega \right) \cap D_{3\sigma
-1}^{\prime }\left( \Omega \right) =E^{\sigma }\left( \Omega \right)
\end{equation*}
\end{theorem}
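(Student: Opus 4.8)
The plan is to prove the two inclusions separately. The inclusion $E^{\sigma }\left( \Omega \right) \subseteq \mathcal{G}^{\sigma ,\infty }\left( \Omega \right) \cap D_{3\sigma -1}^{\prime }\left( \Omega \right) $ is essentially formal, while the reverse inclusion carries all the content. For the easy direction, given $f\in E^{\sigma }\left( \Omega \right) $ the constant net $f_{\varepsilon }=f$ satisfies the defining estimate of $\mathcal{E}_{m}^{\sigma ,\infty }$ trivially, since the Gevrey bound $c^{\left\vert \alpha \right\vert +1}\alpha !^{\sigma }$ holds and the factor $\exp \left( k\varepsilon ^{-\frac{1}{2\sigma -1}}\right) \geq 1$; hence $I\left( f\right) \in \mathcal{G}^{\sigma ,\infty }\left( \Omega \right) $. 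By Proposition \ref{pro2} and the commutativity of the embedding diagram, $I\left( f\right) $ coincides with the image of $f$ regarded as an element of $D_{3\sigma -1}^{\prime }\left( \Omega \right) $, so $E^{\sigma }\left( \Omega \right) $ lies in the intersection.

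For the reverse inclusion, take $T\in D_{3\sigma -1}^{\prime }\left( \Omega \right) $ whose class $\left[ T\right] $ belongs to $\mathcal{G}^{\sigma ,\infty }\left( \Omega \right) $; I want $T\in E^{\sigma }\left( \Omega \right) $. Since both regularity and membership in $E^{\sigma }$ are local and $E^{\sigma }$ is a sheaf, I would fix $x_{0}\in \Omega $, choose $\psi \in D^{\sigma }\left( \Omega \right) $ with $\psi \equiv 1$ on a neighborhood $W$ of $x_{0}$, and pass to $\psi T\in E_{3\sigma -1}^{\prime }\left( \Omega \right) $. Then $\left[ \psi T\right] \in \mathcal{G}_{C}^{\sigma }\left( \Omega \right) $, and because $\psi \in E^{\sigma }\subset \mathcal{G}^{\sigma ,\infty }$ and $\mathcal{G}^{\sigma ,\infty }$ is an algebra, $\psi \left[ T\right] \in \mathcal{G}^{\sigma ,\infty }\left( \Omega \right) $. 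The one point needing care here is the identity $\left[ \psi T\right] =\psi \left[ T\right] $ in $\mathcal{G}^{\sigma }\left( \Omega \right) $, i.e. that multiplication by the Gevrey cutoff commutes with the embedding modulo $\mathcal{N}^{\sigma }$; this is routine. By Proposition \ref{pro-coinc} the class $\left[ \psi T\right] $ may be represented by $\left( \psi T\ast \phi _{\varepsilon }\right) _{\varepsilon }$, so the Paley--Wiener characterization becomes applicable.

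Applying Proposition \ref{pro1} to the regular compactly supported element $\left[ \psi T\right] $ yields constants $k_{1},k_{2},c>0$ with
\[
\left\vert \mathcal{F}\left( \psi T\ast \phi _{\varepsilon }\right) \left( \xi \right) \right\vert \leq c\exp \left( k_{1}\varepsilon ^{-\frac{1}{2\sigma -1}}-k_{2}\left\vert \xi \right\vert ^{\frac{1}{\sigma }}\right) .
\]
Since $\mathcal{F}\left( \psi T\ast \phi _{\varepsilon }\right) \left( \xi \right) =\widehat{\psi T}\left( \xi \right) \hat{\phi}\left( \varepsilon \xi \right) $ and $\hat{\phi}\left( 0\right) =\int \phi =1$ with $\hat{\phi}$ continuous, I would fix $\delta >0$ so that $\left\vert \hat{\phi}\left( \eta \right) \right\vert \geq \frac{1}{2}$ for $\left\vert \eta \right\vert \leq \delta $, and for large $\left\vert \xi \right\vert $ make the $\xi $-dependent choice $\varepsilon =\delta /\left\vert \xi \right\vert $. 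This gives $\left\vert \hat{\phi}\left( \varepsilon \xi \right) \right\vert \geq \frac{1}{2}$ and
\[
\left\vert \widehat{\psi T}\left( \xi \right) \right\vert \leq 2c\exp \left( k_{1}\delta ^{-\frac{1}{2\sigma -1}}\left\vert \xi \right\vert ^{\frac{1}{2\sigma -1}}-k_{2}\left\vert \xi \right\vert ^{\frac{1}{\sigma }}\right) .
\]
The decisive point is that $\sigma >1$ forces $2\sigma -1>\sigma $, hence $\frac{1}{2\sigma -1}<\frac{1}{\sigma }$, so the negative term dominates and $\left\vert \widehat{\psi T}\left( \xi \right) \right\vert \leq C\exp \left( -\frac{k_{2}}{2}\left\vert \xi \right\vert ^{\frac{1}{\sigma }}\right) $ for large $\left\vert \xi \right\vert $ (and trivially for bounded $\xi $).

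This exponential decay of order $\frac{1}{\sigma }$ for the compactly supported $\psi T$ is exactly the Paley--Wiener criterion for membership in the Gevrey class $D^{\sigma }$, so $\psi T\in D^{\sigma }\left( \Omega \right) \subset E^{\sigma }\left( \Omega \right) $. Because $\psi \equiv 1$ on $W$, the ultradistribution $T$ agrees with $\psi T$ on $W$, whence $T\in E^{\sigma }\left( W\right) $; since $x_{0}$ is arbitrary and $E^{\sigma }$ is a sheaf, $T\in E^{\sigma }\left( \Omega \right) $. I expect the main obstacle to be the passage from the generalized estimate of Proposition \ref{pro1} to a genuine pointwise decay estimate on $\widehat{\psi T}$; this is resolved by the balancing choice $\varepsilon \sim 1/\left\vert \xi \right\vert $ together with the inequality $\frac{1}{2\sigma -1}<\frac{1}{\sigma }$, the remaining technical chore being the verification that $\left[ \psi T\right] =\psi \left[ T\right] $.
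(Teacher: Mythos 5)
Your route is genuinely different from the paper's at the decisive analytic step, and most of it is sound. Both arguments localize identically (cut off to get $\psi T\in E_{3\sigma -1}^{\prime }\left( \Omega \right) $, invoke the Paley--Wiener characterization of Proposition \ref{pro1}, then make a $\xi $-dependent choice of $\varepsilon $, exploiting $\frac{1}{2\sigma -1}<\frac{1}{\sigma }$). But where the paper never Fourier-transforms the mollified net alone --- it compares $\mathcal{F}\left( \chi \left( T\ast \phi _{\varepsilon }\right) \right) \left( \xi \right) $ with $\mathcal{F}\left( T\right) \left( \xi \right) $ by pairing $T$ against the compactly supported test functions $\left( \chi e^{-i\xi \cdot }\right) \ast \phi _{\varepsilon }-\chi e^{-i\xi \cdot }$, applying Corollary \ref{corol1} to get an error $c^{\prime }e^{-k_{3}\varepsilon ^{-\frac{1}{2\sigma -1}}}$ for every $k_{3}$, and then balancing with $\varepsilon =\left( k_{1}/\left( \left( k_{2}-r\right) \left\vert \xi \right\vert ^{\frac{1}{\sigma }}\right) \right) ^{2\sigma -1}$ --- you use the exact factorization $\mathcal{F}\left( \psi T\ast \phi _{\varepsilon }\right) \left( \xi \right) =\widehat{\psi T}\left( \xi \right) \hat{\phi}\left( \varepsilon \xi \right) $ and divide by $\hat{\phi}\left( \varepsilon \xi \right) $. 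The items you flag as routine ($\left[ \psi T\right] =\psi \left[ T\right] $, $\left[ \psi T\right] \in \mathcal{G}_{C}^{\sigma }$) are of the same nature as steps the paper itself leaves implicit, so they are not the real issue.

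The genuine gap is your appeal to Proposition \ref{pro1}. Its forward direction is proved by integration by parts over a fixed compact set and requires a representative with supports in a fixed compact; the net $\left( \psi T\ast \phi _{\varepsilon }\right) _{\varepsilon }$ is not one, since $\phi \in \mathcal{S}^{\left( \sigma \right) }$ is the Fourier transform of a $D^{\left( \sigma \right) }$ function, hence real analytic and never compactly supported. So Proposition \ref{pro1} only yields the bound $\left( \ref{3-2}\right) $ for a cut-off representative such as $\chi \cdot \left( \psi T\ast \phi _{\varepsilon }\right) $ with $\chi \in D^{\sigma }$, $\chi \equiv 1$ near $\mathrm{supp}\,\psi $ --- exactly what the paper uses --- whereas your factorization identity holds only for the uncut net. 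You must therefore control the bridge term $\mathcal{F}\left( \left( 1-\chi \right) \left( \psi T\ast \phi _{\varepsilon }\right) \right) \left( \xi \right) $, and here your balancing $\varepsilon =\delta /\left\vert \xi \right\vert $ is the wrong scale for the generic estimate: the rate one gets for free for a null tail is $c_{k}e^{-k\varepsilon ^{-\frac{1}{2\sigma -1}}}$ uniformly in $\xi $, which under $\varepsilon =\delta /\left\vert \xi \right\vert $ becomes $c_{k}e^{-k\left( \left\vert \xi \right\vert /\delta \right) ^{\frac{1}{2\sigma -1}}}$ --- decay of order $\frac{1}{2\sigma -1}$ only, proving at best regularity of Gevrey order $2\sigma -1$, not $T\in E^{\sigma }\left( \Omega \right) $. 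Two repairs exist: (a) prove the sharper tail lemma $\left\Vert \left( 1-\chi \right) \left( \psi T\ast \phi _{\varepsilon }\right) \right\Vert _{L^{1}}=O\left( e^{-a\varepsilon ^{-\frac{1}{\sigma }}}\right) $, true because $\phi \in \mathcal{S}^{\left( \sigma \right) }$ decays like $e^{-a\left\vert x\right\vert ^{\frac{1}{\sigma }}}$ and $\frac{1}{\sigma }>\frac{1}{2\sigma -1}$ absorbs the $E_{3\sigma -1}^{\prime }$ pairing losses; then your choice $\varepsilon =\delta /\left\vert \xi \right\vert $ gives the tail decay $e^{-a^{\prime }\left\vert \xi \right\vert ^{\frac{1}{\sigma }}}$, the right order; or (b) adopt the paper's scale $\varepsilon =c_{0}\left\vert \xi \right\vert ^{-\frac{2\sigma -1}{\sigma }}$ with $c_{0}$ large: then $\varepsilon \left\vert \xi \right\vert =c_{0}\left\vert \xi \right\vert ^{\frac{1}{\sigma }-1}\rightarrow 0$, so $\left\vert \hat{\phi}\left( \varepsilon \xi \right) \right\vert \geq \frac{1}{2}$ is automatic, the Paley--Wiener term still leaves $e^{-r\left\vert \xi \right\vert ^{\frac{1}{\sigma }}}$, and only the weaker uniform tail rate $c_{k}e^{-k\varepsilon ^{-\frac{1}{2\sigma -1}}}$ is needed (still a lemma, but a softer one). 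Either way a tail estimate is a genuinely missing ingredient of your write-up; the paper's pairing argument is designed precisely to avoid it.
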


\begin{proof}
Let $S\in \mathcal{G}^{\sigma ,\infty }\left( \Omega \right) \cap D_{3\sigma
-1}^{\prime }\left( \Omega \right) $, for any fixed $x_{0}\in \Omega $ we
take $\psi \in D^{3\sigma -1}\left( \Omega \right) $ with $\psi \equiv 1$ on
neighborhood $U$ of $x_{0},$ then $T=\psi S\in E_{3\sigma -1}^{\prime
}\left( \Omega \right) .$ Let $\phi _{\varepsilon }$ be a net of mollifiers
with $\check{\phi}=\phi $ and let $\chi \in D^{\sigma }\left( \Omega \right)
$ such that $\chi \equiv 1$ on $K=supp\psi .$ As $\left[ T\right] \in
\mathcal{G}^{\sigma ,\infty }\left( \Omega \right) ,$ $\exists
k_{1}>0,\exists k_{2}>0,\exists c_{1}>0,\exists \varepsilon _{1}>0,\forall
\varepsilon \leq \varepsilon _{1},$%
\begin{equation*}
\left\vert \mathcal{F}\left( \chi \left( T\ast \phi _{\varepsilon }\right)
\right) \left( \xi \right) \right\vert \leq c_{1}e^{k_{1}\varepsilon ^{-%
\frac{1}{2\sigma -1}}-k_{2}\left\vert \xi \right\vert ^{\frac{1}{\sigma }}},
\end{equation*}%
then
\begin{eqnarray*}
\left\vert \mathcal{F}\left( \chi \left( T\ast \phi _{\varepsilon }\right)
\right) \left( \xi \right) -\mathcal{F}\left( T\right) \left( \xi \right)
\right\vert &=&\left\vert \mathcal{F}\left( \chi \left( T\ast \phi
_{\varepsilon }\right) \right) \left( \xi \right) -\mathcal{F}\left( \chi
T\right) \left( \xi \right) \right\vert \\
&=&\left\vert \left\langle T\left( x\right) ,\left( \chi \left( x\right)
e^{-i\xi x}\right) \ast \phi _{\varepsilon }\left( x\right) -\left( \chi
\left( x\right) e^{-i\xi x}\right) \right\rangle \right\vert
\end{eqnarray*}%
As $E_{3\sigma -1}^{\prime }\left( \Omega \right) \subset E_{\sigma
}^{\prime }\left( \Omega \right) ,$ then $\exists L$ a compact subset of $%
\Omega $ such that $\forall h>0,\exists c>0,$ and
\begin{equation*}
\left\vert \mathcal{F}\left( \chi \left( T\ast \phi _{\varepsilon }\right)
\right) \left( \xi \right) -\mathcal{F}\left( T\right) \left( \xi \right)
\right\vert \leq c\sup_{\alpha \in \mathbb{Z}_{+}^{m},x\in L}\frac{%
h^{\left\vert \alpha \right\vert }}{\alpha !^{\sigma }}\left\vert \left(
\partial _{x}^{\alpha }\left( \chi \left( x\right) e^{-i\xi x}\ast \phi
_{\varepsilon }\left( x\right) -\chi \left( x\right) e^{-i\xi x}\right)
\right) \right\vert
\end{equation*}%
We have $e^{-i\xi }\chi \in D^{\sigma }\left( \Omega \right) ,$ from the
corollary \ref{corol1}, $\forall k_{3}>0,\exists c_{2}>0,\exists \eta
>0,\forall \varepsilon \leq \eta ,$%
\begin{equation*}
\sup_{\alpha \in \mathbb{Z}_{+}^{m},x\in L}\frac{c_{2}^{\left\vert \alpha
\right\vert }}{\alpha !^{\sigma }}\left\vert \partial _{x}^{\alpha }\left(
\chi \left( x\right) e^{-i\xi x}\ast \phi _{\varepsilon }\left( x\right)
-\chi \left( x\right) e^{-i\xi x}\right) \right\vert \leq
c_{2}e^{-k_{3}\varepsilon ^{-\frac{1}{2\sigma -1}}},
\end{equation*}%
so there exists $c^{\prime }=c^{\prime }\left( k_{3}\right) >0$, such that
\begin{equation*}
\left\vert \mathcal{F}\left( T\right) \left( \xi \right) -\mathcal{F}\left(
\chi \left( T\ast \phi _{\varepsilon }\right) \right) \left( \xi \right)
\right\vert \leq c^{\prime }e^{-k_{3}\varepsilon ^{-\frac{1}{2\sigma -1}}}
\end{equation*}%
Let $\varepsilon \leq \min \left( \eta ,\varepsilon _{1}\right) ,$ then
\begin{eqnarray*}
\left\vert \mathcal{F}\left( T\right) \left( \xi \right) \right\vert &\leq
&\left\vert \mathcal{F}\left( T\right) \left( \xi \right) -\mathcal{F}\left(
\chi \left( T\ast \phi _{\varepsilon }\right) \right) \left( \xi \right)
\right\vert +\left\vert \mathcal{F}\left( \chi \left( T\ast \phi
_{\varepsilon }\right) \right) \left( \xi \right) \right\vert \\
&\leq &c^{\prime }e^{-k_{3}\varepsilon ^{-\frac{1}{2\sigma -1}%
}}+c_{1}e^{k_{1}\varepsilon ^{-\frac{1}{2\sigma -1}}-k_{2}\left\vert \xi
\right\vert ^{\frac{1}{\sigma }}}
\end{eqnarray*}%
Take $c=\max \left( c^{\prime },c_{1}\right) ,$ $\varepsilon =\left( \dfrac{%
k_{1}}{\left( k_{2}-r\right) \left\vert \xi \right\vert ^{\frac{1}{\sigma }}}%
\right) ^{2\sigma -1},r\in \left] 0,k_{2}\right[ $ and $k_{3}=\dfrac{k_{1}r}{%
k_{2}-r}$, then $\exists \delta >0,\exists c>0$ such that
\begin{equation*}
\left\vert \mathcal{F}\left( T\right) \left( \xi \right) \right\vert \leq
ce^{-\delta \left\vert \xi \right\vert ^{\frac{1}{\sigma }}},
\end{equation*}%
which means $T=\psi S\in E^{\sigma }\left( \Omega \right) $. As $\psi \equiv
1$ on the neighborhood $U$ of $x_{0}$, then $S\in E^{\sigma }\left( U\right)
$, consequently $S\in E^{\sigma }\left( \Omega \right) ,$ which prove.
\begin{equation*}
\mathcal{G}^{\sigma ,\infty }\left( \Omega \right) \cap D_{3\sigma
-1}^{\prime }\left( \Omega \right) \subset E^{\sigma }\left( \Omega \right)
\end{equation*}%
We have $E^{\sigma }\left( \Omega \right) \subset E^{3\sigma -1}\left(
\Omega \right) \subset D_{3\sigma -1}^{\prime }\left( \Omega \right) $ and $%
E^{\sigma }\left( \Omega \right) \subset \mathcal{G}^{\sigma ,\infty }\left(
\Omega \right) $ then
\begin{equation*}
E^{\sigma }\left( \Omega \right) \subset \mathcal{G}^{\sigma ,\infty }\left(
\Omega \right) \cap D_{3\sigma -1}^{\prime }\left( \Omega \right)
\end{equation*}%
Consequently we have
\begin{equation*}
\mathcal{G}^{\sigma ,\infty }\left( \Omega \right) \cap D_{3\sigma
-1}^{\prime }\left( \Omega \right) =E^{\sigma }\left( \Omega \right)
\end{equation*}
\end{proof}

\section{Generalized Gevrey wave front}

The aim of this section is to introduce the generalized Gevrey wave front of
a generalized Gevrey ultradistribution and to give its main properties.

\begin{definition}
We define $\sum_{g}^{\sigma }\left( f\right) \subset \mathbb{R}%
^{m}\backslash \left\{ 0\right\} ,f\in \mathcal{G}_{C}^{\sigma }\left(
\Omega \right) $, as the complement of the set of points having a conic
neighborhood $\Gamma $ such that $\exists k_{1}>0,\exists k_{2}>0,\exists
c>0,\exists \varepsilon _{0}\in \left] 0,1\right] ,\forall \xi \in \Gamma
,\forall \varepsilon \leq \varepsilon _{0},$
\begin{equation}
\left\vert \mathcal{F}\left( f_{\varepsilon }\right) \left( \xi \right)
\right\vert \leq c\exp \left( k_{1}\varepsilon ^{-\frac{1}{2\sigma -1}%
}-k_{2}\left\vert \xi \right\vert ^{\frac{1}{\sigma }}\right)  \label{3-1}
\end{equation}
\end{definition}

The following essential properties of $\sum_{g}^{\sigma }\left( f\right) $
are sufficient to define later the generalized Gevrey wave front of
generalized Gevrey ultradistribution.

\begin{proposition}
\label{ref5}For every $f\in \mathcal{G}_{C}^{\sigma }\left( \Omega \right) $%
, we have

1. The set $\sum_{g}^{\sigma }\left( f\right) $ is a closed cone.

2. $\sum_{g}^{\sigma }\left( f\right) =\emptyset \Longleftrightarrow f\in
\mathcal{G}^{\sigma ,\infty }\left( \Omega \right) .$

3. $\sum_{g}^{\sigma }\left( \psi f\right) \subset \sum_{g}^{\sigma }\left(
f\right) ,\forall \psi \in E^{\sigma }\left( \Omega \right) .$
\end{proposition}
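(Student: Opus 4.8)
The plan is to handle the three assertions in increasing order of difficulty, using the Paley--Wiener characterization of Proposition \ref{pro1} and the moderate Fourier bound (\ref{ref3}) as the essential tools, all computations being carried out for a fixed representative $\left( f_{\varepsilon }\right) _{\varepsilon }$ of $f$.

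For assertion 1, I would argue directly from the definition. The complement of $\sum_{g}^{\sigma }\left( f\right) $ in $\mathbb{R}^{m}\backslash \{0\}$ is the set of directions admitting a conic neighborhood on which (\ref{3-1}) holds. If $\xi _{0}$ lies in this complement with conic neighborhood $\Gamma $, then every $\xi \in \Gamma $ has $\Gamma $ itself as a conic neighborhood on which (\ref{3-1}) holds, so the complement is open; and since $\Gamma $ is a cone, the whole ray through $\xi _{0}$ lies in the complement, so the complement is conic. Hence $\sum_{g}^{\sigma }\left( f\right) $ is a closed cone. For assertion 2, the implication $f\in \mathcal{G}^{\sigma ,\infty }\left( \Omega \right) \Rightarrow \sum_{g}^{\sigma }\left( f\right) =\emptyset $ is immediate from Proposition \ref{pro1}, since the global estimate (\ref{3-2}) is precisely (\ref{3-1}) with $\Gamma =\mathbb{R}^{m}\backslash \{0\}$, so every direction is good. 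Conversely, if $\sum_{g}^{\sigma }\left( f\right) =\emptyset $ then every point of the compact unit sphere $S^{m-1}$ has a conic neighborhood on which (\ref{3-1}) holds; extracting a finite subcover $\Gamma _{1},\dots ,\Gamma _{p}$ with associated constants $k_{1}^{(j)},k_{2}^{(j)},c^{(j)},\varepsilon _{0}^{(j)}$ and setting $k_{1}=\max_{j}k_{1}^{(j)}$, $k_{2}=\min_{j}k_{2}^{(j)}$, $c=\max_{j}c^{(j)}$, $\varepsilon _{0}=\min_{j}\varepsilon _{0}^{(j)}$ yields (\ref{3-1}) on all of $\mathbb{R}^{m}\backslash \{0\}$, that is the global estimate (\ref{3-2}); by Proposition \ref{pro1} this gives $f\in \mathcal{G}^{\sigma ,\infty }\left( \Omega \right) $.

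Assertion 3 is the substantial one. First I would reduce to $\psi \in D^{\sigma }\left( \mathbb{R}^{m}\right) $: since $f\in \mathcal{G}_{C}^{\sigma }\left( \Omega \right) $ has a representative supported in a fixed compact $K$, I may replace $\psi $ by $\chi \psi $ with $\chi \in D^{\sigma }\left( \Omega \right) $, $\chi \equiv 1$ near $K$, without changing $\psi f$, and then $\chi \psi $ is a compactly supported Gevrey function for which the classical Paley--Wiener estimate $\left\vert \mathcal{F}\left( \psi \right) \left( \zeta \right) \right\vert \leq C\exp \left( -a\left\vert \zeta \right\vert ^{1/\sigma }\right) $ holds for some $a,C>0$. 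Since $\mathcal{F}\left( \psi f_{\varepsilon }\right) $ is a constant multiple of the convolution $\mathcal{F}\left( \psi \right) \ast \mathcal{F}\left( f_{\varepsilon }\right) $, I would fix a good direction $\xi _{0}\notin \sum_{g}^{\sigma }\left( f\right) $ with conic neighborhood $\Gamma $ on which (\ref{3-1}) holds, pick a smaller conic neighborhood $\Gamma ^{\prime }$ of $\xi _{0}$, and for $\xi \in \Gamma ^{\prime }$ split the convolution integral over $\{\left\vert \xi -\eta \right\vert \leq \rho \left\vert \xi \right\vert \}$ and its complement, with $\rho >0$ small. On the first region $\eta $ is forced into $\Gamma $ and satisfies $\left\vert \eta \right\vert \geq \left( 1-\rho \right) \left\vert \xi \right\vert $, so the good estimate (\ref{3-1}) on $f_{\varepsilon }$ together with $\int \left\vert \mathcal{F}\left( \psi \right) \right\vert <\infty $ produces a bound of the form $c\exp \left( k_{1}\varepsilon ^{-1/(2\sigma -1)}-k_{2}^{\prime }\left\vert \xi \right\vert ^{1/\sigma }\right) $.

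On the second region $\left\vert \xi -\eta \right\vert >\rho \left\vert \xi \right\vert $, I would invoke the moderate bound (\ref{ref3}), which for $f\in \mathcal{G}_{C}^{\sigma }\left( \Omega \right) $ furnishes a fixed $k_{1}$ such that for every $k_{2}^{\prime }>0$ one has $\left\vert \mathcal{F}\left( f_{\varepsilon }\right) \left( \eta \right) \right\vert \leq c\exp \left( k_{1}\varepsilon ^{-1/(2\sigma -1)}+k_{2}^{\prime }\left\vert \eta \right\vert ^{1/\sigma }\right) $. Using the subadditivity $\left( \left\vert \xi \right\vert +\left\vert \xi -\eta \right\vert \right) ^{1/\sigma }\leq \left\vert \xi \right\vert ^{1/\sigma }+\left\vert \xi -\eta \right\vert ^{1/\sigma }$ (valid since $\sigma \geq 1$) and the cone separation $\left\vert \xi -\eta \right\vert ^{1/\sigma }>\rho ^{1/\sigma }\left\vert \xi \right\vert ^{1/\sigma }$, I would combine $-a\left\vert \xi -\eta \right\vert ^{1/\sigma }+k_{2}^{\prime }\left\vert \eta \right\vert ^{1/\sigma }$ and, choosing $k_{2}^{\prime }$ small enough that the net coefficient of $\left\vert \xi \right\vert ^{1/\sigma }$ becomes negative, obtain a bound $c\exp \left( k_{1}\varepsilon ^{-1/(2\sigma -1)}-k_{2}^{\prime \prime }\left\vert \xi \right\vert ^{1/\sigma }\right) $ after integrating the remaining integrable factor $\exp \left( -\tfrac{1}{2}\left( a-k_{2}^{\prime }\right) \left\vert \xi -\eta \right\vert ^{1/\sigma }\right) $ in $\eta $. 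Adding the two regions shows $\xi _{0}\notin \sum_{g}^{\sigma }\left( \psi f\right) $, hence $\sum_{g}^{\sigma }\left( \psi f\right) \subset \sum_{g}^{\sigma }\left( f\right) $. The main obstacle is precisely this book-keeping of exponents in the second region: the freedom to take $k_{2}^{\prime }$ arbitrarily small in (\ref{ref3}) is exactly what allows the rapid decay of $\mathcal{F}\left( \psi \right) $ to dominate the moderate growth of $\mathcal{F}\left( f_{\varepsilon }\right) $ and to yield genuine conic decay in $\xi $.
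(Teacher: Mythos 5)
Your proposal is correct and follows essentially the same route as the paper: assertions 1 and 2 are exactly the ``easy'' arguments the paper leaves to the reader (openness/conicity of the complement, plus a compactness argument on the sphere combined with Proposition \ref{pro1}), and for assertion 3 you use the same ingredients in the same way --- cut off with $\chi\in D^{\sigma}$ equal to $1$ near $\operatorname{supp}f_{\varepsilon}$, write $\mathcal{F}\left( \psi f_{\varepsilon }\right)$ as a convolution, and split into a ``near'' region where the conic estimate $\left( \ref{3-1}\right)$ applies and a ``far'' region where the Gevrey decay of $\mathcal{F}\left( \chi \psi \right)$ beats the moderate bound $\left( \ref{ref3}\right)$ with $\mu_{2}$ chosen small. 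The only difference is cosmetic: you split along $\left\vert \xi -\eta \right\vert \leq \rho \left\vert \xi \right\vert$ while the paper uses $\left\vert \xi -\eta \right\vert ^{1/\sigma }\leq \delta \left( \left\vert \xi \right\vert ^{1/\sigma }+\left\vert \eta \right\vert ^{1/\sigma }\right)$, and both yield the same exponent bookkeeping.
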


\begin{proof}
One can easily, from definition and proposition \ref{pro1}, prove the
assertions 1 and 2.

Let suppose that $\xi _{0}\notin \sum_{g}^{\sigma }\left( f\right) $, then $%
\exists \Gamma $ a conic neighborhood of $\xi _{0},\exists k_{1}>0,\exists
k_{2}>0,\exists c_{1}>0,\exists \varepsilon _{1}\in \left] 0,1\right] ,$ $%
\forall \xi \in \Gamma ,\forall \varepsilon \leq \varepsilon _{1},$
\begin{equation}
\left\vert \mathcal{F}\left( f_{\varepsilon }\right) \left( \xi \right)
\right\vert \leq c_{1}\exp \left( k_{1}\varepsilon ^{-\frac{1}{2\sigma -1}%
}-k_{2}\left\vert \xi \right\vert ^{\frac{1}{\sigma }}\right)  \label{3-02}
\end{equation}%
Let $\chi \in D^{\sigma }\left( \Omega \right) ,$ $\chi \equiv 1$ on
neighborhood of $suppf$, so $\chi \psi \in D^{\sigma }\left( \Omega \right) $%
, hence $\exists k_{3}>0,\exists c_{2}>0,\forall \xi \in \mathbb{R}^{m},$
\begin{equation}
\left\vert \mathcal{F}\left( \chi \psi \right) \left( \xi \right)
\right\vert \leq c_{2}\exp \left( -k_{3}\left\vert \xi \right\vert ^{\frac{1%
}{\sigma }}\right) ,  \label{3-3}
\end{equation}%
Let $\Lambda $ be a conic neighborhood of $\xi _{0}$ such that, $\overline{%
\Lambda }\subset \Gamma ,$ we have, for a fixed $\xi \in \Lambda $,
\begin{eqnarray*}
\mathcal{F}\left( \psi f_{\varepsilon }\right) \left( \xi \right) &=&%
\mathcal{F}\left( \chi \psi f_{\varepsilon }\right) \left( \xi \right) \\
&=&\int_{A}\mathcal{F}\left( f_{\varepsilon }\right) \left( \eta \right)
\mathcal{F}\left( \chi \psi \right) \left( \eta -\xi \right) d\eta +\int_{B}%
\mathcal{F}\left( f_{\varepsilon }\right) \left( \eta \right) \mathcal{F}%
\left( \chi \psi \right) \left( \eta -\xi \right) d\eta ,
\end{eqnarray*}%
where $A=\left\{ \eta \in \mathbb{R}^{m};\left\vert \xi -\eta \right\vert ^{%
\frac{1}{\sigma }}\leq \delta \left( \left\vert \xi \right\vert ^{\frac{1}{%
\sigma }}+\left\vert \eta \right\vert ^{\frac{1}{\sigma }}\right) \right\} $%
; $B=\left\{ \eta \in \mathbb{R}^{m};\left\vert \xi -\eta \right\vert ^{%
\frac{1}{\sigma }}>\delta \left( \left\vert \xi \right\vert ^{\frac{1}{%
\sigma }}+\left\vert \eta \right\vert ^{\frac{1}{\sigma }}\right) \right\} $%
. We choose $\delta $ sufficiently small such that $A\subset \Gamma $ and $%
\dfrac{\left\vert \xi \right\vert }{2^{\sigma }}<\left\vert \eta \right\vert
<2^{\sigma }\left\vert \xi \right\vert ,\forall \eta \in A$. Then \ for $%
\varepsilon \leq \varepsilon _{1},$
\begin{eqnarray}
\left\vert \int_{A}\mathcal{F}\left( f_{\varepsilon }\right) \left( \eta
\right) \mathcal{F}\left( \chi \psi \right) \left( \eta -\xi \right) d\eta
\right\vert &\leq &c_{1}c_{2}\exp \left( k_{1}\varepsilon ^{-\frac{1}{%
2\sigma -1}}-\frac{k_{2}}{2}\left\vert \xi \right\vert ^{\frac{1}{\sigma }%
}\right) \times  \notag \\
&&\times \left\vert \int_{A}\exp \left( -k_{3}\left\vert \eta -\xi
\right\vert ^{\frac{1}{\sigma }}\right) d\eta \right\vert  \notag \\
&\leq &c\exp \left( k_{1}\varepsilon ^{-\frac{1}{2\sigma -1}}-\frac{k_{2}}{2}%
\left\vert \xi \right\vert ^{\frac{1}{\sigma }}\right)  \label{3-4}
\end{eqnarray}%
As $f\in \mathcal{G}_{C}^{\sigma }\left( \Omega \right) ,$ from (\ref{ref3}%
), $\exists c_{3}>0,\exists \mu _{1}>0,\exists \varepsilon _{2}\in \left] 0,1%
\right] ,\forall \mu _{2}>0,\forall \xi \in \mathbb{R}^{m},\forall
\varepsilon \leq \varepsilon _{2},$ such that
\begin{equation*}
\left\vert \mathcal{F}\left( f_{\varepsilon }\right) \left( \xi \right)
\right\vert \leq c_{3}\exp \left( \mu _{1}\varepsilon ^{-\frac{1}{2\sigma -1}%
}+\mu _{2}\left\vert \xi \right\vert ^{\frac{1}{\sigma }}\right) ,
\end{equation*}%
hence, for $\varepsilon \leq \min \left( \varepsilon _{1},\varepsilon
_{2}\right) ,$ we have
\begin{eqnarray*}
\left\vert \int_{B}\mathcal{F}\left( f_{\varepsilon }\right) \left( \eta
\right) \mathcal{F}\left( \chi \psi \right) \left( \eta -\xi \right) d\eta
\right\vert &\leq &c_{2}c_{3}\exp \left( \mu _{1}\varepsilon ^{-\frac{1}{%
2\sigma -1}}\right) \int_{B}\exp \left( \mu _{2}\left\vert \eta \right\vert
^{\frac{1}{\sigma }}-k_{3}\left\vert \eta -\xi \right\vert ^{\frac{1}{\sigma
}}\right) d\eta \\
&\leq &c\exp \left( \mu _{1}\varepsilon ^{-\frac{1}{2\sigma -1}}\right)
\int_{B}\exp \left( \mu _{2}\left\vert \eta \right\vert ^{\frac{1}{\sigma }%
}-k_{3}\delta \left( \left\vert \xi \right\vert ^{\frac{1}{\sigma }%
}+\left\vert \eta \right\vert ^{\frac{1}{\sigma }}\right) \right) d\eta ,
\end{eqnarray*}%
then, taking $\mu _{2}<k_{3}\delta ,$ we obtain
\begin{equation}
\left\vert \int_{B}\mathcal{F}\left( f_{\varepsilon }\right) \left( \eta
\right) \mathcal{F}\left( \chi \psi \right) \left( \eta -\xi \right) d\eta
\right\vert \leq c\exp \left( \mu _{1}\varepsilon ^{-\frac{1}{2\sigma -1}%
}-k_{3}\delta \left\vert \xi \right\vert ^{\frac{1}{\sigma }}\right)
\label{3-5}
\end{equation}%
Consequently, (\ref{3-4}) and (\ref{3-5}) give $\xi _{0}\notin
\sum_{g}^{\sigma }\left( \psi f\right) .$
\end{proof}

\begin{definition}
Let $f\in \mathcal{G}^{\sigma }\left( \Omega \right) $ and $x_{0}\in \Omega $%
, the cone of $\sigma $-singular directions of $f$ at $x_{0}$, denoted $%
\sum_{g,x_{0}}^{\sigma }\left( f\right) $, is
\begin{equation}
\sum\nolimits_{g,x_{0}}^{\sigma }\left( f\right) =\bigcap \left\{
\sum\nolimits_{g}^{\sigma }\left( \phi f\right) :\phi \in D^{\sigma }\left(
\Omega \right) \text{ and }\phi \equiv 1\text{ on a neighborhood of }%
x_{0}\right\}  \label{3-6}
\end{equation}
\end{definition}

\begin{lemma}
\label{lem1}Let $f\in \mathcal{G}^{\sigma }\left( \Omega \right) $, then
\begin{equation*}
\sum\nolimits_{g,x_{0}}^{\sigma }\left( f\right) =\emptyset
\Longleftrightarrow x_{0}\notin \sigma \text{-}singsupp_{g}\left( f\right)
\end{equation*}
\end{lemma}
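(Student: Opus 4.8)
The plan is to derive the equivalence entirely from Proposition~\ref{ref5}, whose three parts supply exactly the tools needed: the cones $\sum\nolimits_{g}^{\sigma}\left( \phi f\right) $ are closed (part 1), emptiness of such a cone is equivalent to $\mathcal{G}^{\sigma ,\infty }$-regularity (part 2), and multiplication by a Gevrey function can only shrink the cone, namely $\sum\nolimits_{g}^{\sigma }\left( \psi g\right) \subset \sum\nolimits_{g}^{\sigma }\left( g\right) $ for $\psi \in E^{\sigma }\left( \Omega \right) $ (part 3). Throughout, each cutoff $\phi \in D^{\sigma }\left( \Omega \right) $ makes $\phi f$ compactly supported, so Proposition~\ref{ref5} is applicable to $\phi f$.

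For the implication $(\Leftarrow)$, I would assume $x_{0}\notin \sigma $-$singsupp_{g}\left( f\right) $, so that there is an open $\Omega ^{\prime }\ni x_{0}$ with $f_{/\Omega ^{\prime }}\in \mathcal{G}^{\sigma ,\infty }\left( \Omega ^{\prime }\right) $. Choosing $\phi \in D^{\sigma }\left( \Omega \right) $ with $\phi \equiv 1$ near $x_{0}$ and $\operatorname{supp}\phi \subset \Omega ^{\prime }$, the product $\phi f$ is regular on $\Omega ^{\prime }$ (because $\mathcal{E}_{m}^{\sigma ,\infty }$ is an algebra and $\phi \in E^{\sigma }\hookrightarrow \mathcal{G}^{\sigma ,\infty }$) and vanishes off $\operatorname{supp}\phi $; since $\mathcal{G}^{\sigma ,\infty }$ is a subsheaf, gluing gives $\phi f\in \mathcal{G}^{\sigma ,\infty }\left( \Omega \right) \cap \mathcal{G}_{C}^{\sigma }\left( \Omega \right) $. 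Part 2 then yields $\sum\nolimits_{g}^{\sigma }\left( \phi f\right) =\emptyset $, and as this $\phi $ is admissible in the intersection $\left( \ref{3-6}\right) $, it follows that $\sum\nolimits_{g,x_{0}}^{\sigma }\left( f\right) =\emptyset $.

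For the converse $(\Rightarrow)$, suppose $\sum\nolimits_{g,x_{0}}^{\sigma }\left( f\right) =\emptyset $. For each unit vector $\xi \in S^{m-1}$, emptiness of the intersection furnishes an admissible $\phi _{\xi }$ (with $\phi _{\xi }\equiv 1$ on a neighborhood $U_{\xi }$ of $x_{0}$) such that $\xi \notin \sum\nolimits_{g}^{\sigma }\left( \phi _{\xi }f\right) $; since this cone is closed (part 1), $\xi $ has an open conic neighborhood $\Gamma _{\xi }$ disjoint from it. By compactness of $S^{m-1}$ I extract a finite subcover $\Gamma _{\xi _{1}},\dots ,\Gamma _{\xi _{N}}$ of $\mathbb{R}^{m}\setminus \left\{ 0\right\} $, with associated $\phi _{1},\dots ,\phi _{N}$ and neighborhoods $U_{1},\dots ,U_{N}$ of $x_{0}$. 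Now I pick $\phi _{0}\in D^{\sigma }\left( \Omega \right) $ with $\phi _{0}\equiv 1$ on a neighborhood $V$ of $x_{0}$ and $\operatorname{supp}\phi _{0}\subset \bigcap_{j=1}^{N}U_{j}$, so that $\phi _{j}\equiv 1$ on $\operatorname{supp}\phi _{0}$ and hence $\phi _{0}f=\phi _{0}\left( \phi _{j}f\right) $ for every $j$. Part 3 applied with $\psi =\phi _{0}$ gives $\sum\nolimits_{g}^{\sigma }\left( \phi _{0}f\right) \subset \sum\nolimits_{g}^{\sigma }\left( \phi _{j}f\right) $ for each $j$, whence $\sum\nolimits_{g}^{\sigma }\left( \phi _{0}f\right) \subset \bigcap_{j=1}^{N}\sum\nolimits_{g}^{\sigma }\left( \phi _{j}f\right) $. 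Since every $\xi \neq 0$ lies in some $\Gamma _{\xi _{j}}$, which avoids $\sum\nolimits_{g}^{\sigma }\left( \phi _{j}f\right) $, this intersection is empty, so $\sum\nolimits_{g}^{\sigma }\left( \phi _{0}f\right) =\emptyset $. By part 2, $\phi _{0}f\in \mathcal{G}^{\sigma ,\infty }\left( \Omega \right) $, and since $f=\phi _{0}f$ on $V$ and $\mathcal{G}^{\sigma ,\infty }$ is a subsheaf, $f_{/V}\in \mathcal{G}^{\sigma ,\infty }\left( V\right) $, i.e. $x_{0}\notin \sigma $-$singsupp_{g}\left( f\right) $.

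The delicate point is the passage from the finitely many cutoffs $\phi _{j}$ to a single $\phi _{0}$: the whole argument hinges on the identity $\phi _{0}f=\phi _{0}\left( \phi _{j}f\right) $, valid precisely because each $\phi _{j}$ equals $1$ on $\operatorname{supp}\phi _{0}$, together with the monotonicity of part 3, which force $\sum\nolimits_{g}^{\sigma }\left( \phi _{0}f\right) $ into the intersection of the $\sum\nolimits_{g}^{\sigma }\left( \phi _{j}f\right) $. This is the generalized-function analogue of the classical patching in H\"{o}rmander's wave front calculus, and checking that the conic neighborhoods genuinely cover $\mathbb{R}^{m}\setminus \left\{ 0\right\} $ after the compactness step is the only place requiring real care.
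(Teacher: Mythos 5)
Your proof is correct, and it rests on the same pillars as the paper's: Proposition \ref{ref5} (closedness of the cones, the equivalence of $\sum\nolimits_{g}^{\sigma}(\cdot)=\emptyset$ with regularity for compactly supported elements, and monotonicity under multiplication by Gevrey functions), exploited through the identity $\phi_{0}f=\phi_{0}\left( \phi_{j}f\right)$ valid when $\phi_{j}\equiv 1$ on $\mathrm{supp}\,\phi_{0}$. Where you genuinely diverge is in how compactness of the unit sphere is deployed in the direction $(\Rightarrow)$. The paper fixes in advance a single shrinking family $\psi_{j}\left( x\right) =\psi \left( 3^{j}(x-x_{0})+x_{0}\right)$, shows that every admissible $\phi$ dominates some $\psi_{j}$ (whence $\sum\nolimits_{g}^{\sigma}\left( \psi_{j}f\right) \subset \sum\nolimits_{g}^{\sigma}\left( \phi f\right)$), deduces that the nested closed cones $\sum\nolimits_{g}^{\sigma}\left( \psi_{j}f\right)$ have empty intersection, and then asserts that some $\sum\nolimits_{g}^{\sigma}\left( \psi_{n}f\right)$ is already empty --- a step stated without justification in the paper, and which tacitly requires exactly the compactness you make explicit (a decreasing family of closed subsets of $S^{m-1}$ with empty intersection must be eventually empty). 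You instead apply compactness up front: for each direction $\xi$ a cutoff $\phi_{\xi}$ and a conic neighborhood $\Gamma_{\xi}$ avoiding $\sum\nolimits_{g}^{\sigma}\left( \phi_{\xi}f\right)$, a finite subcover of $S^{m-1}$, and then one cutoff $\phi_{0}$ subordinate to the finitely many $U_{j}$, forcing $\sum\nolimits_{g}^{\sigma}\left( \phi_{0}f\right) \subset \bigcap_{j}\sum\nolimits_{g}^{\sigma}\left( \phi_{j}f\right) =\emptyset$. The two arrangements are logically equivalent (finite intersection property versus finite subcover), but yours has the merit of avoiding the countable auxiliary sequence and of surfacing the compactness that the paper's phrase ``there exists $n\in \mathbb{Z}^{+}$ sufficiently large'' elides; the paper's version, in exchange, gives a concrete exhaustion by dilated cutoffs. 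Your $(\Leftarrow)$ direction matches the paper's, with the small extra care --- gluing $\phi f$ from $\Omega^{\prime}$ and the complement of $\mathrm{supp}\,\phi$ via the subsheaf property of $\mathcal{G}^{\sigma ,\infty }$ --- spelled out where the paper simply asserts $\phi f\in \mathcal{G}^{\sigma ,\infty }\left( \Omega \right)$.
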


\begin{proof}
Let $x_{0}\notin \sigma $- $singsupp_{g}\left( f\right) ,$ i.e. $\exists
U\subset \Omega $ an open neighborhood of $x_{0}$ such that $f\in \mathcal{G}%
^{\sigma ,\infty }\left( U\right) $, let $\phi \in D^{\sigma }\left(
U\right) $ such that $\phi \equiv 1$ on a neighborhood of $x_{0},$ then $%
\phi f\in \mathcal{G}^{\sigma ,\infty }\left( \Omega \right) .$ Hence, from
the proposition \ref{ref5}, $\sum\nolimits_{g}^{\sigma }\left( \phi f\right)
=\emptyset ,$ i.e. $\sum\nolimits_{g,x_{0}}^{\sigma }\left( f\right)
=\emptyset .$

Suppose now $\sum\nolimits_{g,x_{0}}^{\sigma }\left( f\right) =\emptyset ,$
let $r>0$ such that $B\left( x_{0},2r\right) \subset \Omega $ and let $\psi
\in D^{\sigma }\left( B\left( x_{0},2r\right) \right) $ such that $0\leq $ $%
\psi \leq 1$ and $\psi \equiv 1$ on $B\left( x_{0},r\right) $. Let $\psi
_{j}\left( x\right) =\psi \left( 3^{j}(x-x_{0})+x_{0}\right) $ then it is
clear that $supp\left( \psi _{j}\right) \subset B\left( x_{0},\frac{2r}{3^{j}%
}\right) \subset \Omega $ and $\psi _{j}\equiv 1$ on $B\left( x_{0},\frac{r}{%
3^{j}}\right) ,$ we have $\forall \phi \in D^{\sigma }\left( \Omega \right) $
with $\phi \equiv 1$ on a neighborhood $U$ of $x_{0},$ $\exists j\in \mathbb{%
Z}^{+}$ such that $supp\left( \psi _{j}\right) \subset U$, then $\psi
_{j}f_{\varepsilon }=\psi _{j}\phi f_{\varepsilon }$ and from proposition %
\ref{ref5}, we have
\begin{equation*}
\sum\nolimits_{g}^{\sigma }\left( \psi _{j}f\right) \subset
\sum\nolimits_{g}^{\sigma }\left( \phi f\right) ,
\end{equation*}
which gives
\begin{equation}
\bigcap\limits_{j\in \mathbb{Z}^{+}}\left( \sum\nolimits_{g}^{\sigma }\left(
\psi _{j}f\right) \right) =\emptyset  \label{3-7}
\end{equation}
We have $\psi _{j}\equiv 1$ on $supp\left( \psi _{j+1}\right) ,$ then $%
\sum\nolimits_{g}^{\sigma }\left( \psi _{j+1}f\right) \subset
\sum\nolimits_{g}^{\sigma }\left( \psi _{j}f\right) $, so from (\ref{3-7}),
there exists $n\in \mathbb{Z}^{+}$ sufficiently large such that $\left( \psi
_{n}f\right) \in \mathcal{G}^{\sigma ,\infty }\left( \Omega \right) ,$ then $%
f\in \mathcal{G}^{\sigma ,\infty }\left( B\left( x_{0},\frac{r}{3^{n}}%
\right) \right) ,$ which means. $x_{0}\notin \sigma $- $singsupp_{g}\left(
f\right) .$
\end{proof}

Now, we are ready to give the definition of the generalized Gevrey wave
front.

\begin{definition}
A point $\left( x_{0},\xi _{0}\right) \notin WF_{g}^{\sigma }\left( f\right)
\subset \Omega \times \mathbb{R}^{m}\backslash \left\{ 0\right\} $ if $\xi
_{0}\notin \sum\nolimits_{g,x_{0}}^{\sigma }\left( f\right) ,$ i.e. there
exists $\phi \in D^{\sigma }\left( \Omega \right) ,\phi \left( x\right) =1$
neighborhood of $x_{0}$, and conic neighborhood $\Gamma $ of $\xi _{0}$, $%
\exists k_{1}>0,\exists k_{2}>0,\exists c>0,\exists \varepsilon _{0}\in %
\left] 0,1\right] ,$ such that $\forall \xi \in \Gamma ,\forall \varepsilon
\leq \varepsilon _{0},$
\begin{equation*}
\left| \mathcal{F}\left( \phi f_{\varepsilon }\right) \left( \xi \right)
\right| \leq c\exp \left( k_{1}\varepsilon ^{-\frac{1}{2\sigma -1}%
}-k_{2}\left| \xi \right| ^{\frac{1}{\sigma }}\right)
\end{equation*}
\end{definition}

The main properties of the generalized Gevrey wave front $WF_{g}^{\sigma }$
are resumed in the following proposition.

\begin{proposition}
Let $f\in \mathcal{G}^{\sigma }\left( \Omega \right) $, then

1) The projection of $WF_{g}^{\sigma }\left( f\right) $ on $\Omega $ is the $%
\sigma -$ $singsupp_{g}\left( f\right) .$

2) If $f\in \mathcal{G}_{C}^{\sigma }\left( \Omega \right) ,$ then the
projection of $WF_{g}^{\sigma }\left( f\right) $ on $\mathbb{R}%
^{m}\backslash \left\{ 0\right\} $ is $\sum_{g}^{\sigma }\left( f\right) .$

3) $\forall \alpha \in \mathbb{Z}_{+}^{m},WF_{g}^{\sigma }\left( \partial
^{\alpha }f\right) \subset WF_{g}^{\sigma }\left( f\right) .$

4) $\forall g\in \mathcal{G}^{\sigma ,\infty }\left( \Omega \right)
,WF_{g}^{\sigma }\left( gf\right) \subset WF_{g}^{\sigma }\left( f\right) .$
\end{proposition}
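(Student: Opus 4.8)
The plan is to reduce everything to the two structural facts already available: Lemma \ref{lem1}, which tells when the cone of singular directions is empty, and Proposition \ref{ref5}, whose third assertion ($\sum_g^\sigma(\psi f)\subset\sum_g^\sigma(f)$ for $\psi\in E^\sigma(\Omega)$) is the workhorse for inserting and removing cutoffs. Parts 1) and 2) are then set-theoretic consequences, while 3) and 4) require in addition a short Fourier computation carried out on representatives. For 1), note that by the definition of $WF_g^\sigma$ a point $x_0$ lies in the projection of $WF_g^\sigma(f)$ onto $\Omega$ exactly when $\sum_{g,x_0}^\sigma(f)\neq\emptyset$; by Lemma \ref{lem1} this happens iff $x_0\in\sigma$-$singsupp_g(f)$, so the two sets coincide with no further work.

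For 2), write the projection onto $\mathbb{R}^m\setminus\{0\}$ as $\{\xi_0:\exists x_0,\ \xi_0\in\sum_{g,x_0}^\sigma(f)\}$. The inclusion in $\sum_g^\sigma(f)$ is immediate, since for any admissible cutoff $\phi$ one has $\sum_{g,x_0}^\sigma(f)\subset\sum_g^\sigma(\phi f)\subset\sum_g^\sigma(f)$, the last step by Proposition \ref{ref5}.3 applied with $\phi\in D^\sigma(\Omega)\subset E^\sigma(\Omega)$. For the reverse inclusion I would argue by contraposition. Let $K=supp_g^\sigma f$, which is compact as $f\in\mathcal{G}_C^\sigma$, and suppose $\xi_0\notin\sum_{g,x_0}^\sigma(f)$ for every $x_0\in K$; then each $x_0$ admits $\phi_{x_0}\in D^\sigma(\Omega)$, $\phi_{x_0}\equiv1$ near $x_0$, and a conic neighborhood $\Gamma_{x_0}$ of $\xi_0$ on which the decay estimate for $\mathcal{F}(\phi_{x_0}f_\varepsilon)$ holds. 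Extracting a finite subcover of $K$, choosing a Gevrey partition of unity $\chi=\sum_j\psi_j\equiv1$ near $K$ with each $\psi_j\in D^\sigma$ supported where the corresponding $\phi_j\equiv1$, and using $f_\varepsilon=\chi f_\varepsilon=\sum_j\psi_j f_\varepsilon$ together with $\psi_j f=\psi_j(\phi_j f)$ and Proposition \ref{ref5}.3, one sees that each $\mathcal{F}(\psi_j f_\varepsilon)$ satisfies the estimate on the common cone $\Gamma=\bigcap_j\Gamma_{j}$. Summing the finitely many terms gives the estimate for $\mathcal{F}(f_\varepsilon)$ on $\Gamma$, so $\xi_0\notin\sum_g^\sigma(f)$. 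This finite-covering bookkeeping is the only delicate point of the first two parts.

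For 3) and 4) I would fix $(x_0,\xi_0)\notin WF_g^\sigma(f)$, i.e.\ $\xi_0\notin\sum_{g,x_0}^\sigma(f)$, pick $\phi_0\in D^\sigma(\Omega)$ with $\phi_0\equiv1$ near $x_0$ and $\xi_0\notin\sum_g^\sigma(\phi_0 f)$, and then choose a second cutoff $\phi\equiv1$ near $x_0$ with $supp\,\phi$ contained in $\{\phi_0\equiv1\}$, so that $\phi\,\partial^\alpha f=\phi\,\partial^\alpha(\phi_0 f)$ and $\phi\,(gf)=(\phi g)(\phi_0 f)$ at the level of representatives. For 3), since $\mathcal{F}(\partial^\alpha(\phi_0 f)_\varepsilon)(\xi)=(i\xi)^\alpha\mathcal{F}((\phi_0 f)_\varepsilon)(\xi)$ and the polynomial factor is absorbed into the exponential by slightly lowering the decay rate, using $|\xi|^{|\alpha|}\exp(-k_2|\xi|^{1/\sigma})\leq C\exp(-k_2'|\xi|^{1/\sigma})$, one obtains $\xi_0\notin\sum_g^\sigma(\partial^\alpha(\phi_0 f))$, and Proposition \ref{ref5}.3 with multiplier $\phi$ then gives $\xi_0\notin\sum_g^\sigma(\phi\,\partial^\alpha f)$. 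For 4), set $h=\phi g\in\mathcal{G}_C^{\sigma,\infty}(\Omega)$ and $u=\phi_0 f\in\mathcal{G}_C^\sigma(\Omega)$; by the Paley--Wiener characterization (Proposition \ref{pro1}) the factor $\mathcal{F}(h_\varepsilon)$ decays like $\exp(k_1'\varepsilon^{-1/(2\sigma-1)}-k_2'|\xi|^{1/\sigma})$ uniformly in $\xi$. Writing $\mathcal{F}((hu)_\varepsilon)=(2\pi)^{-m}\mathcal{F}(h_\varepsilon)\ast\mathcal{F}(u_\varepsilon)$ and splitting the convolution integral into the region where $\eta$ remains in the cone $\Gamma$ (controlled by the decay of $\mathcal{F}(u_\varepsilon)$ and the moderate size of $\mathcal{F}(h_\varepsilon)$) and its complement (controlled by the decay of $\mathcal{F}(h_\varepsilon)(\xi-\eta)$ and the moderate bound (\ref{ref3}) on $\mathcal{F}(u_\varepsilon)$) reproduces the estimate of Proposition \ref{ref5}.3 and yields $\xi_0\notin\sum_g^\sigma(\phi(gf))$.

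In both cases this shows $\xi_0\notin\sum_{g,x_0}^\sigma(\partial^\alpha f)$, resp.\ $\xi_0\notin\sum_{g,x_0}^\sigma(gf)$, which are precisely the asserted inclusions of wave fronts. I expect the principal obstacle to be the reverse inclusion of 2), where the finite covering and the compatibility of the several cutoffs must be handled carefully; the convolution splitting needed for 4) is essentially a reprise of the one already performed for Proposition \ref{ref5}.3 and should present no new difficulty.
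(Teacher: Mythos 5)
Your treatment of parts 3) and 4) is correct and is essentially the paper's own argument. For 3) the paper localizes with nested cutoffs exactly as you do, writing $\psi \partial f=\partial \left( \psi f\right) -\left( \partial \psi \right) f$ with $\psi$ supported where the first cutoff $\phi$ equals $1$, and then absorbs the polynomial factor via $\left\vert \xi \right\vert ^{\left\vert \alpha \right\vert }\exp \left( -k_{2}\left\vert \xi \right\vert ^{\frac{1}{\sigma }}\right) \leq c\exp \left( -k_{2}^{\prime }\left\vert \xi \right\vert ^{\frac{1}{\sigma }}\right) $; your direct computation $\mathcal{F}\left( \partial ^{\alpha }\left( \phi _{0}f\right) _{\varepsilon }\right) =\left( i\xi \right) ^{\alpha }\mathcal{F}\left( \left( \phi _{0}f\right) _{\varepsilon }\right) $ is a cosmetic variant. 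For 4) the paper likewise takes $\psi \equiv 1$ on $supp\phi $, uses the Paley--Wiener characterization (proposition \ref{pro1}) for the regular factor $\psi g$, the moderate bound (\ref{ref3}) for the other factor, and repeats the $A$/$B$ convolution splitting of proposition \ref{ref5} --- precisely your plan. For 1) and 2) the paper gives no proof at all ("1) and 2) hold from the definition, the proposition \ref{ref5} and lemma \ref{lem1}"), so your finite-covering argument supplies detail the paper omits; part 1) via lemma \ref{lem1} is exactly the intended reading.

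One step of your part 2) would, however, fail as literally written. With $K=supp_{g}^{\sigma }f$ and $\chi =\sum_{j}\psi _{j}\equiv 1$ only near $K$, the identity $f_{\varepsilon }=\chi f_{\varepsilon }$ is false at the level of representatives: a representative may be nonzero (though negligible) outside any neighborhood of the generalized support, so in fact $f_{\varepsilon }=\chi f_{\varepsilon }+n_{\varepsilon }$ with $\left( n_{\varepsilon }\right) _{\varepsilon }\in \mathcal{N}^{\sigma }\left( \Omega \right) $ compactly supported. This remainder is not harmless for the Fourier estimate: membership in $\mathcal{N}^{\sigma }$ carries no Gevrey control of the constants $c\left( \alpha \right) $, so $\mathcal{F}\left( n_{\varepsilon }\right) $ is only bounded by $c_{k}\exp \left( -k\varepsilon ^{-\frac{1}{2\sigma -1}}\right) $ uniformly in $\xi $, without the factor $\exp \left( -k_{2}\left\vert \xi \right\vert ^{\frac{1}{\sigma }}\right) $, and when $\left\vert \xi \right\vert ^{\frac{1}{\sigma }}$ is large compared with $\varepsilon ^{-\frac{1}{2\sigma -1}}$ such a bound does not fit the estimate (\ref{3-1}). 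The repair is immediate: by the paper's characterization of $\mathcal{G}_{C}^{\sigma }\left( \Omega \right) $ choose a representative with $suppf_{\varepsilon }\subset K^{\prime }$ for a fixed compact $K^{\prime }$, and run your covering and partition-of-unity argument over $K^{\prime }$ rather than over $supp_{g}^{\sigma }f$ (your hypothesis $\xi _{0}\notin \sum\nolimits_{g,x_{0}}^{\sigma }\left( f\right) $ is available at every $x_{0}\in \Omega $, so nothing else changes); then $f_{\varepsilon }=\sum_{j}\psi _{j}f_{\varepsilon }$ holds exactly and the finitely many conic estimates on $\Gamma =\bigcap_{j}\Gamma _{j}$ sum up as you intend.
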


\begin{proof}
1) and 2) hold from the definition, the proposition \ref{ref5} and lemma \ref%
{lem1}. 3) Let $\left( x_{0},\xi _{0}\right) \notin WF_{g}^{\sigma }\left(
f\right) $, then $\exists \phi \in D^{\sigma }\left( \Omega \right) ,\phi
\equiv 1$ on a neighborhood $\overline{U}$ of $x_{0}$, there exist a conic
neighborhood $\Gamma $ of $\xi _{0},\exists k_{1}>0,\exists k_{2}>0,\exists
c_{1}>0,\exists \varepsilon _{0}\in \left] 0,1\right] ,$ such that $\forall
\xi \in \Gamma ,\forall \varepsilon \leq \varepsilon _{0},$
\begin{equation}
\left\vert \mathcal{F}\left( \phi f_{\varepsilon }\right) \left( \xi \right)
\right\vert \leq c_{1}\exp \left( k_{1}\varepsilon ^{-\frac{1}{2\sigma -1}%
}-k_{2}\left\vert \xi \right\vert ^{\frac{1}{\sigma }}\right)  \label{3-8}
\end{equation}%
We have, for $\psi \in D^{\sigma }\left( U\right) $ such that $\psi \left(
x_{0}\right) =1$,
\begin{eqnarray*}
\left\vert \mathcal{F}\left( \psi \partial f_{\varepsilon }\right) \left(
\xi \right) \right\vert &=&\left\vert \mathcal{F}\left( \partial \left( \psi
f_{\varepsilon }\right) \right) \left( \xi \right) -\mathcal{F}\left( \left(
\partial \psi \right) f_{\varepsilon }\right) \left( \xi \right) \right\vert
\\
&\leq &\left\vert \xi \right\vert \left\vert \mathcal{F}\left( \psi \phi
f_{\varepsilon }\right) \left( \xi \right) \right\vert +\left\vert \mathcal{F%
}\left( \left( \partial \psi \right) \phi f_{\varepsilon }\right) \left( \xi
\right) \right\vert
\end{eqnarray*}%
As $WF_{g}^{\sigma }\left( \psi f\right) \subset WF_{g}^{\sigma }\left(
f\right) ,$ then (\ref{3-8}) holds for both $\left\vert \mathcal{F}\left(
\psi \phi f_{\varepsilon }\right) \left( \xi \right) \right\vert \ $and $%
\left\vert \mathcal{F}\left( \left( \partial \psi \right) \phi
f_{\varepsilon }\right) \left( \xi \right) \right\vert .$ So
\begin{eqnarray*}
\left\vert \xi \right\vert \left\vert \mathcal{F}\left( \psi \phi
f_{\varepsilon }\right) \left( \xi \right) \right\vert &\leq &c\left\vert
\xi \right\vert \exp \left( k_{1}\varepsilon ^{-\frac{1}{2\sigma -1}%
}-k_{2}\left\vert \xi \right\vert ^{\frac{1}{\sigma }}\right) \\
&\leq &c^{\prime }\exp \left( k_{1}\varepsilon ^{-\frac{1}{2\sigma -1}%
}-k_{3}\left\vert \xi \right\vert ^{\frac{1}{\sigma }}\right) ,
\end{eqnarray*}%
with $c^{\prime }>0,k_{3}>0$ such that $\left\vert \xi \right\vert \leq
c^{\prime }\exp \left( k_{2}-k_{3}\right) \left\vert \xi \right\vert ^{\frac{%
1}{\sigma }}.$ Hence (\ref{3-8}) holds for $\left\vert \mathcal{F}\left(
\psi \partial f_{\varepsilon }\right) \left( \xi \right) \right\vert ,$
which proves $\left( x_{0},\xi _{0}\right) \notin WF_{g}^{\sigma }\left(
\partial f\right) $

4) Let $\left( x_{0},\xi _{0}\right) \notin WF_{g}^{\sigma }\left( f\right) $%
, then $\exists \phi \in D^{\sigma }\left( \Omega \right) ,\phi \equiv 1$ on
a neighborhood $U$ of $x_{0}$, there exist a conic neighborhood $\Gamma $ of
$\xi _{0},$ $\exists k_{1}>0,\exists k_{2}>0,\exists c_{1}>0,\exists
\varepsilon _{0}\in \left] 0,1\right] ,$ such that $\forall \xi \in \Gamma
,\forall \varepsilon \leq \varepsilon _{0},$
\begin{equation*}
\left\vert \mathcal{F}\left( \phi f_{\varepsilon }\right) \left( \xi \right)
\right\vert \leq c_{1}\exp \left( k_{1}\varepsilon ^{-\frac{1}{2\sigma -1}%
}-k_{2}\left\vert \xi \right\vert ^{\frac{1}{\sigma }}\right)
\end{equation*}%
Let $\psi \in D^{\sigma }\left( \Omega \right) $ and $\psi \equiv 1$ on $%
supp\phi ,$ then $\mathcal{F}\left( \phi g_{\varepsilon }f_{\varepsilon
}\right) =\mathcal{F}\left( \psi g_{\varepsilon }\right) \ast \mathcal{F}%
\left( \phi f_{\varepsilon }\right) .$ We have $\psi g\in \mathcal{G}%
^{\sigma ,\infty }\left( \Omega \right) \cap \mathcal{G}_{C}^{\sigma }\left(
\Omega \right) ,$ then $\exists c_{2}>0,$ $\exists k_{3}>0,\exists
k_{4}>0,\exists \varepsilon _{1}>0,\forall \xi \in \mathbb{R}^{m},\forall
\varepsilon \leq \varepsilon _{1},$
\begin{equation*}
\left\vert \mathcal{F}\left( \psi g_{\varepsilon }\right) \left( \xi \right)
\right\vert \leq c_{2}\exp \left( k_{3}\varepsilon ^{-\frac{1}{2\sigma -1}%
}-k_{4}\left\vert \xi \right\vert ^{\frac{1}{\sigma }}\right) ,
\end{equation*}%
so
\begin{equation*}
\mathcal{F}\left( \phi g_{\varepsilon }f_{\varepsilon }\right) \left( \xi
\right) =\int_{A}\mathcal{F}\left( \phi f_{\varepsilon }\right) \left( \eta
\right) \mathcal{F}\left( \psi g_{\varepsilon }\right) \left( \eta -\xi
\right) d\eta +\int_{B}\mathcal{F}\left( \phi f_{\varepsilon }\right) \left(
\eta \right) \mathcal{F}\left( \psi g_{\varepsilon }\right) \left( \eta -\xi
\right) d\eta ,
\end{equation*}%
where $A$ and $B$ are the same as in the proof of proposition \ref{ref5}. By
(\ref{ref3}), we have $\exists c>0,\exists \mu _{1}>0,\forall \mu
_{2}>0,\exists \varepsilon _{2}>0,\forall \xi \in \mathbb{R}^{m},\forall
\varepsilon \leq \varepsilon _{2},$%
\begin{equation*}
\left\vert \mathcal{F}\left( \phi f_{\varepsilon }\right) \left( \xi \right)
\right\vert \leq c\exp \left( \mu _{1}\varepsilon ^{-\frac{1}{2\sigma -1}%
}+\mu _{2}\left\vert \xi \right\vert ^{\frac{1}{\sigma }}\right)
\end{equation*}%
The same steps as the proposition \ref{ref5}\ finish the proof.
\end{proof}

\begin{corollary}
Let $P\left( x,D\right) =\sum\limits_{\left\vert \alpha \right\vert \leq
m}a_{\alpha }\left( x\right) D^{\alpha }$ be a partial differential operator
with $\mathcal{G}^{\sigma ,\infty }\left( \Omega \right) $ coefficients,
then
\begin{equation*}
WF_{g}^{\sigma }\left( P\left( x,D\right) f\right) \subset WF_{g}^{\sigma
}\left( f\right) ,\forall f\in \mathcal{G}^{\sigma }\left( \Omega \right)
\end{equation*}
\end{corollary}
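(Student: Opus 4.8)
The plan is to reduce the inclusion to the stability properties of $WF_{g}^{\sigma }$ already proved in the preceding proposition, together with a sub-additivity property for finite sums that I would establish along the way, since it does not appear among the listed properties. Writing $P\left( x,D\right) f=\sum_{\left\vert \alpha \right\vert \leq m}a_{\alpha }D^{\alpha }f$, the idea is to control the generalized Gevrey wave front of each summand $a_{\alpha }D^{\alpha }f$ separately and then to add them up.

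First I would handle a single term. Since $D^{\alpha }$ and $\partial ^{\alpha }$ differ only by the nonzero constant factor $\left( -i\right) ^{\left\vert \alpha \right\vert }$, they produce the same Fourier estimate and hence the same generalized Gevrey wave front, so the third property of the wave-front proposition gives $WF_{g}^{\sigma }\left( D^{\alpha }f\right) \subset WF_{g}^{\sigma }\left( f\right) $. Because each coefficient $a_{\alpha }$ lies in $\mathcal{G}^{\sigma ,\infty }\left( \Omega \right) $, the fourth property (applied with $g=a_{\alpha }$) then yields $WF_{g}^{\sigma }\left( a_{\alpha }D^{\alpha }f\right) \subset WF_{g}^{\sigma }\left( D^{\alpha }f\right) \subset WF_{g}^{\sigma }\left( f\right) $. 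Thus every summand of $P\left( x,D\right) f$ has wave front contained in $WF_{g}^{\sigma }\left( f\right) $.

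Next I would prove the sub-additivity $WF_{g}^{\sigma }\left( h_{1}+h_{2}\right) \subset WF_{g}^{\sigma }\left( h_{1}\right) \cup WF_{g}^{\sigma }\left( h_{2}\right) $, the general finite-sum case following by induction. Fix $\left( x_{0},\xi _{0}\right) $ outside both $WF_{g}^{\sigma }\left( h_{1}\right) $ and $WF_{g}^{\sigma }\left( h_{2}\right) $; then there are cutoffs $\phi _{1},\phi _{2}\in D^{\sigma }\left( \Omega \right) $, each identically $1$ near $x_{0}$, and conic neighborhoods of $\xi _{0}$ on which the defining estimate holds for $\mathcal{F}\left( \phi _{1}h_{1\varepsilon }\right) $ and $\mathcal{F}\left( \phi _{2}h_{2\varepsilon }\right) $ respectively. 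Choosing $\phi \in D^{\sigma }\left( \Omega \right) $ with $\phi \equiv 1$ near $x_{0}$ and $\phi _{1}\equiv \phi _{2}\equiv 1$ on $\mathrm{supp}\,\phi $, one has $\phi h_{i}=\phi \phi _{i}h_{i}$, and since $\phi \in E^{\sigma }\left( \Omega \right) $ while $\phi _{i}h_{i}\in \mathcal{G}_{C}^{\sigma }\left( \Omega \right) $, the third assertion of Proposition \ref{ref5} gives $\xi _{0}\notin \sum_{g}^{\sigma }\left( \phi h_{i}\right) $ for $i=1,2$. On the intersection of the two associated conic neighborhoods of $\xi _{0}$ both estimates hold at once, and the linearity $\mathcal{F}\left( \phi \left( h_{1\varepsilon }+h_{2\varepsilon }\right) \right) =\mathcal{F}\left( \phi h_{1\varepsilon }\right) +\mathcal{F}\left( \phi h_{2\varepsilon }\right) $ lets me add them to obtain the required bound for $\phi \left( h_{1}+h_{2}\right) $; hence $\left( x_{0},\xi _{0}\right) \notin WF_{g}^{\sigma }\left( h_{1}+h_{2}\right) $.

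Combining the two steps, induction over the finitely many multi-indices $\alpha $ with $\left\vert \alpha \right\vert \leq m$ gives
\[
WF_{g}^{\sigma }\left( P\left( x,D\right) f\right) \subset \bigcup_{\left\vert \alpha \right\vert \leq m}WF_{g}^{\sigma }\left( a_{\alpha }D^{\alpha }f\right) \subset WF_{g}^{\sigma }\left( f\right) ,
\]
which is the assertion. The only ingredient not already available is the sub-additivity, and the delicate point there is the bookkeeping of cutoffs: passing from the two individually good test functions $\phi _{1},\phi _{2}$ to a single common cutoff $\phi $ and a single common conic neighborhood of $\xi _{0}$, which is exactly where the invariance of $\sum_{g}^{\sigma }$ under multiplication by $E^{\sigma }\left( \Omega \right) $ factors (Proposition \ref{ref5}) is needed; once this is done the estimate for the sum follows immediately from linearity of the Fourier transform. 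Everything else is a direct application of properties 3 and 4 of the wave-front proposition.
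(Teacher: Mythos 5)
Your proof is correct and takes essentially the route the paper intends: the corollary is stated there with no separate proof, as an immediate consequence of properties 3 and 4 of the preceding wave-front proposition, which is exactly the reduction you perform on each term $a_{\alpha }D^{\alpha }f$. Your explicit verification of the sub-additivity $WF_{g}^{\sigma }\left( h_{1}+h_{2}\right) \subset WF_{g}^{\sigma }\left( h_{1}\right) \cup WF_{g}^{\sigma }\left( h_{2}\right) $ --- in particular passing from the two cutoffs $\phi _{1},\phi _{2}$ to a single common cutoff $\phi $ via part 3 of Proposition \ref{ref5}, which is precisely where the Fourier-side smearing caused by multiplying by $\phi $ must be controlled before the two estimates can be added on a common conic neighborhood --- correctly fills in the step the paper leaves tacit.
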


\begin{remark}
The reverse inclusion will give a generalized Gevrey microlocal
hypoellipticity of linear partial differential operators with regular Gevrey
generalized coefficients. The case of generalized\ $\mathcal{G}^{\infty }-$%
microlocal hypoellipticity in Colombeau algebra has been studied recently in
\cite{HorObPil}.
\end{remark}

We need the following lemma to show the relationship between $WF_{g}^{\sigma
}\left( T\right) $ and $WF^{\sigma }\left( T\right) ,$ when $T\in D_{3\sigma
-1}^{\prime }\left( \Omega \right) $.

\begin{lemma}
\label{lem4}Let $\varphi \in D^{\sigma }\left( B\left( 0,2\right) \right) ,$
$0\leq \varphi \leq 1$ and $\varphi \equiv 1$ on $B\left( 0,1\right) ,$ and
let $\phi \in S^{\left( \sigma \right) }$, then $\exists c>0,\exists \nu >0,$
$\exists \varepsilon _{0}>0,\forall \varepsilon \in \left] 0,\varepsilon _{0}%
\right] ,\forall \xi \in \mathbb{R}^{m},$
\begin{equation*}
\left\vert \widehat{\rho _{\varepsilon }}\left( \xi \right) \right\vert \leq
c\varepsilon ^{-m}e^{-\nu \varepsilon ^{\frac{1}{\sigma }}\left\vert \xi
\right\vert ^{\frac{1}{\sigma }}},
\end{equation*}%
where $\rho _{\varepsilon }\left( x\right) =\left( \frac{1}{\varepsilon }%
\right) ^{m}\phi \left( \frac{x}{\varepsilon }\right) \varphi \left(
x\left\vert \ln \varepsilon \right\vert \right) $, and $\widehat{\rho }$
denotes the Fourier transform of $\rho $.
\end{lemma}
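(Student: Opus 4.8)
The plan is to combine the Gevrey-type derivative bound for $\rho_\varepsilon$ already recorded in the excerpt with the fact that $\rho_\varepsilon$ is compactly supported, and then to read off the exponential decay of $\widehat{\rho_\varepsilon}$ by estimating $\xi^\alpha\widehat{\rho_\varepsilon}(\xi)$ and optimizing over the order of differentiation---exactly the mechanism used in the proof of Proposition \ref{pro1}.

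First I would record two elementary facts. Since $\varphi$ is supported in $B(0,2)$, the factor $\varphi(x|\ln\varepsilon|)$ forces $\operatorname{supp}\rho_\varepsilon\subset B(0,2/|\ln\varepsilon|)$, whose Lebesgue measure is at most $c_m(2/|\ln\varepsilon|)^m\le c_m2^m$ once $\varepsilon\le 1/e$; I denote this uniform bound by $V$ and take $\varepsilon_0\le 1/e$. Secondly, I would invoke the estimate established for $\rho_\varepsilon$ earlier in the text: there is $c>0$ such that $\sup_x|\partial^\alpha\rho_\varepsilon(x)|\le c^{|\alpha|+1}\alpha!^\sigma\varepsilon^{-m-|\alpha|}$ for every $\alpha\in\mathbb{Z}_+^m$ and every $\varepsilon\in]0,1]$.

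The key step is the transfer to the Fourier side. From $(i\xi)^\alpha\widehat{\rho_\varepsilon}(\xi)=\widehat{\partial^\alpha\rho_\varepsilon}(\xi)$ together with $|\widehat{\partial^\alpha\rho_\varepsilon}(\xi)|\le\|\partial^\alpha\rho_\varepsilon\|_{L^1}\le V\sup_x|\partial^\alpha\rho_\varepsilon(x)|$, the two facts give $|\xi^\alpha|\,|\widehat{\rho_\varepsilon}(\xi)|\le Vc^{|\alpha|+1}\alpha!^\sigma\varepsilon^{-m-|\alpha|}$. For a fixed $\xi$ I would pick the index $j$ with $|\xi_j|=\max_i|\xi_i|\ge|\xi|/\sqrt m$ and specialize to $\alpha=Ne_j$, which replaces $|\xi^\alpha|$ by $|\xi_j|^N$ and $\alpha!$ by $N!$; absorbing $(\sqrt m)^N$ into a new constant $C=c\sqrt m$ yields, for every $N\in\mathbb{Z}_+$,
\[
|\xi|^N|\widehat{\rho_\varepsilon}(\xi)|\le V\,C^{N+1}(N!)^\sigma\varepsilon^{-m-N}.
\]

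Finally I would optimize in $N$. Using $N!\le N^N$ this becomes $|\widehat{\rho_\varepsilon}(\xi)|\le VC\varepsilon^{-m}\bigl(CN^\sigma/(\varepsilon|\xi|)\bigr)^N$. When $\varepsilon|\xi|\ge 2C$ I choose $N=\lfloor(\varepsilon|\xi|/2C)^{1/\sigma}\rfloor\ge 1$, so that $CN^\sigma/(\varepsilon|\xi|)\le\tfrac12$ and hence $\bigl(CN^\sigma/(\varepsilon|\xi|)\bigr)^N\le 2^{-N}\le 2\exp(-\nu\,\varepsilon^{1/\sigma}|\xi|^{1/\sigma})$ with $\nu=(\ln 2)(2C)^{-1/\sigma}$, which is exactly the desired estimate after absorbing $2VC$ into the final constant. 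When $\varepsilon|\xi|<2C$ the exponential $\exp(-\nu\,\varepsilon^{1/\sigma}|\xi|^{1/\sigma})$ is bounded below by a positive constant, while $|\widehat{\rho_\varepsilon}(\xi)|\le\|\rho_\varepsilon\|_{L^1}\le Vc\,\varepsilon^{-m}$, so the inequality again holds after enlarging the constant. The only delicate point is this last optimization: I must check that the chosen $N$ is a genuine nonnegative integer and, above all, that the constants $c$ and $\nu$ come out independent of both $\varepsilon$ and $\xi$; the rest is bookkeeping with the two regimes $\varepsilon|\xi|\ge 2C$ and $\varepsilon|\xi|<2C$.
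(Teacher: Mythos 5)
Your proof is correct, but it follows a genuinely different route from the paper's. You work on the physical side: you combine the derivative estimate $\sup_{x}\left\vert \partial ^{\alpha }\rho _{\varepsilon }\left( x\right) \right\vert \leq c^{\left\vert \alpha \right\vert +1}\alpha !^{\sigma }\varepsilon ^{-m-\left\vert \alpha \right\vert }$, which the paper records (without proof) immediately after (\ref{conv}), with the support bound $supp\,\rho _{\varepsilon }\subset B\left( 0,2/\left\vert \ln \varepsilon \right\vert \right) $ to control $\left\Vert \partial ^{\alpha }\rho _{\varepsilon }\right\Vert _{L^{1}}$, and then run the standard Gevrey optimization $N\sim \left( \varepsilon \left\vert \xi \right\vert /2C\right) ^{1/\sigma }$ --- exactly the mechanism of Proposition \ref{pro1}, and your bookkeeping is sound: the restriction $\varepsilon _{0}\leq 1/e$ (needed since $\rho _{\varepsilon }$ loses compact support as $\varepsilon \rightarrow 1^{-}$), the choice $\alpha =Ne_{j}$ with $\left\vert \xi _{j}\right\vert \geq \left\vert \xi \right\vert /\sqrt{m}$, the check $N\geq 1$ when $\varepsilon \left\vert \xi \right\vert \geq 2C$, and the trivial regime $\varepsilon \left\vert \xi \right\vert <2C$ are all handled correctly, with $c$ and $\nu $ visibly independent of $\varepsilon $ and $\xi $. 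The paper instead works entirely on the Fourier side: it writes $\widehat{\rho _{\varepsilon }}\left( \xi \right) =\left\vert \ln \varepsilon \right\vert ^{-m}\int \widehat{\phi }\left( \varepsilon \left( \xi -\eta \right) \right) \widehat{\varphi }\left( \eta /\left\vert \ln \varepsilon \right\vert \right) d\eta $ and splits the integral over the regions $A$ and $B$ used in Proposition \ref{ref5}, invoking the Gevrey Paley--Wiener decay of $\widehat{\varphi }$ and $\widehat{\phi }$ separately; the factor $\varepsilon ^{-m}$ then arises from the substitution $z=\varepsilon \left( \eta -\xi \right) $. What each approach buys: yours is more elementary (no convolution splitting, no separate Paley--Wiener estimates) and is self-contained modulo the displayed derivative bound for $\rho _{\varepsilon }$, whereas the paper's version keeps the two scales $\varepsilon $ and $1/\left\vert \ln \varepsilon \right\vert $ visible --- showing the decay rate $\varepsilon ^{1/\sigma }$ is forced only by the $\phi \left( \cdot /\varepsilon \right) $ factor, the cutoff contributing the better rate $\left\vert \ln \varepsilon \right\vert ^{-1/\sigma }$ --- and reuses the same splitting template that recurs throughout the microlocal sections; both arguments land on the same final estimate.
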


\begin{proof}
We have, for $\varepsilon $ sufficiently small,%
\begin{equation*}
\varepsilon ^{m}\leq \left\vert \ln \varepsilon \right\vert ^{-m}\leq 1
\end{equation*}%
Let $\xi \in \mathbb{R}^{m}$, then%
\begin{eqnarray*}
\widehat{\rho _{\varepsilon }}\left( \xi \right) &=&\left\vert \ln
\varepsilon \right\vert ^{-m}\left[ \int_{A}\widehat{\phi }\left(
\varepsilon \left( \xi -\eta \right) \right) \widehat{\varphi }\left( \frac{%
\eta }{\left\vert \ln \varepsilon \right\vert }\right) d\eta +\right. \\
&&\text{ \ \ \ \ \ \ \ \ \ \ \ \ \ \ \ \ \ \ \ \ \ \ \ }\left. \int_{B}\text{
}\widehat{\phi }\left( \varepsilon \left( \xi -\eta \right) \right) \widehat{%
\varphi }\left( \frac{\eta }{\left\vert \ln \varepsilon \right\vert }\right)
d\eta \text{\ }\right] \text{,}
\end{eqnarray*}%
where $A=\left\{ \eta ;\left\vert \xi -\eta \right\vert ^{\frac{1}{\sigma }%
}\leq \delta ^{\frac{1}{\sigma }}\left( \left\vert \xi \right\vert ^{\frac{1%
}{\sigma }}+\left\vert \eta \right\vert ^{\frac{1}{\sigma }}\right) \right\}
$ and$\ B=\left\{ \eta ;\left\vert \xi -\eta \right\vert ^{\frac{1}{\sigma }%
}>\delta ^{\frac{1}{\sigma }}\left( \left\vert \xi \right\vert ^{\frac{1}{%
\sigma }}+\left\vert \eta \right\vert ^{\frac{1}{\sigma }}\right) \right\} $%
. We choose $\delta $ sufficiently small such that $\dfrac{\left\vert \xi
\right\vert }{2^{\sigma }}<\left\vert \eta \right\vert <2^{\sigma
}\left\vert \xi \right\vert ,\forall \eta \in A_{\delta }$. Since $\varphi
\in D^{\sigma }\left( \Omega \right) ,\phi \in S^{\left( \sigma \right) }$,
then $\exists k_{1},k_{2}>0,\exists c_{1},c_{2}>0,\forall \xi \in \mathbb{R}%
^{m},$
\begin{equation*}
\left\vert \widehat{\varphi }\left( \xi \right) \right\vert \leq c_{1}\exp
\left( -k_{1}\left\vert \xi \right\vert ^{\frac{1}{\sigma }}\right) \text{
and }\left\vert \widehat{\phi }\left( \xi \right) \right\vert \leq c_{2}\exp
\left( -k_{2}\left\vert \xi \right\vert ^{\frac{1}{\sigma }}\right)
\end{equation*}%
So
\begin{eqnarray*}
I_{1} &=&\left\vert \ln \varepsilon \right\vert ^{-m}\left\vert \int_{A}%
\widehat{\phi }\left( \varepsilon \left( \xi -\eta \right) \right) \widehat{%
\varphi }\left( \frac{\eta }{\left\vert \ln \varepsilon \right\vert }\right)
d\eta \right\vert \leq c_{1}c_{2}\exp \left( -k_{2}\frac{\left\vert \ln
\varepsilon \right\vert ^{-\frac{1}{\sigma }}\left\vert \xi \right\vert ^{%
\frac{1}{\sigma }}}{2}\right) \times \\
&&\times \int \exp \left( -k_{1}\varepsilon ^{\frac{1}{\sigma }}\left\vert
\xi -\eta \right\vert ^{\frac{1}{\sigma }}\right) d\eta
\end{eqnarray*}%
Let $z=\varepsilon \left( \eta -\xi \right) ,$ then
\begin{equation*}
I_{1}\leq c\varepsilon ^{-m}\exp \left( -\frac{k_{2}}{2}\left\vert \ln
\varepsilon \right\vert ^{-\frac{1}{\sigma }}\left\vert \xi \right\vert ^{%
\frac{1}{\sigma }}\right) \int \exp \left( -k_{1}\left\vert z\right\vert ^{%
\frac{1}{\sigma }}\right) dz\leq c\varepsilon ^{-m}\exp \left( -v\varepsilon
^{\frac{1}{\sigma }}\left\vert \xi \right\vert ^{\frac{1}{\sigma }}\right)
\end{equation*}%
For $I_{2}$, we have%
\begin{eqnarray*}
I_{2} &=&\left\vert \ln \varepsilon \right\vert ^{-m}\left\vert \int_{B}%
\widehat{\phi }\left( \varepsilon \left( \xi -\eta \right) \right) \widehat{%
\varphi }\left( \frac{\eta }{\left\vert \ln \varepsilon \right\vert }\right)
d\eta \right\vert \\
&\leq &c_{1}c_{2}\int_{B}\exp \left( -k_{1}\varepsilon ^{\frac{1}{\sigma }%
}\left\vert \xi -\eta \right\vert ^{\frac{1}{\sigma }}-k_{2}\left\vert \ln
\varepsilon \right\vert ^{-\frac{1}{\sigma }}\left\vert \eta \right\vert ^{%
\frac{1}{\sigma }}\right) d\eta \\
&\leq &c_{1}c_{2}\exp \left( -k_{1}\delta \varepsilon ^{\frac{1}{\sigma }%
}\left\vert \xi \right\vert ^{\frac{1}{\sigma }}\right) \int_{B}\exp \left(
-k_{1}\delta \varepsilon ^{\frac{1}{\sigma }}\left\vert \eta \right\vert ^{%
\frac{1}{\sigma }}-k_{2}\left\vert \ln \varepsilon \right\vert ^{-\frac{1}{%
\sigma }}\left\vert \eta \right\vert ^{\frac{1}{\sigma }}\right) d\eta \\
&\leq &c_{1}c_{2}\exp \left( -k_{1}\delta \varepsilon ^{\frac{1}{\sigma }%
}\left\vert \xi \right\vert ^{\frac{1}{\sigma }}\right) \int_{B}\exp \left(
-k\varepsilon ^{\frac{1}{\sigma }}\left\vert \eta \right\vert ^{\frac{1}{%
\sigma }}\right) d\eta \\
&\leq &c\varepsilon ^{-m}\exp \left( -v\varepsilon ^{\frac{1}{\sigma }%
}\left\vert \xi \right\vert ^{\frac{1}{\sigma }}\right)
\end{eqnarray*}%
Consequently, $\exists c>0,\exists v>0,\exists \varepsilon _{0}>0,,\forall
\varepsilon \leq \varepsilon _{0}$ such that
\begin{equation*}
\left\vert \widehat{\rho _{\varepsilon }}\left( \xi \right) \right\vert \leq
c\varepsilon ^{-m}\exp \left( -v\varepsilon ^{\frac{1}{\sigma }}\left\vert
\xi \right\vert ^{\frac{1}{\sigma }}\right) ,\forall \xi \in \mathbb{R}^{m}
\end{equation*}
\end{proof}

We have the following important result.

\begin{theorem}
Let $T\in D_{3\sigma -1}^{\prime }\left( \Omega \right) \cap \mathcal{G}%
^{\sigma }\left( \Omega \right) ,$ then $WF_{g}^{\sigma }\left( T\right)
=WF^{\sigma }\left( T\right) .$
\end{theorem}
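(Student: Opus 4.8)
The plan is to prove the two inclusions separately after reducing to the compactly supported case. Fix $(x_{0},\xi_{0})$. Since both $WF^{\sigma}$ and $WF_{g}^{\sigma}$ are tested through cut-offs $\phi\in D^{\sigma}(\Omega)$ with $\phi\equiv1$ near $x_{0}$, I would first choose $\chi\in D^{\sigma}(\Omega)$ equal to $1$ on a neighbourhood of $x_{0}$, so that $S:=\chi T\in E_{3\sigma-1}^{\prime}(\Omega)$ has compact support and agrees with $T$ near $x_{0}$. Because $\rho_{\varepsilon}$ has support shrinking to $\{0\}$ (the factor $\varphi(x\vert\ln\varepsilon\vert)$ localises in a ball of radius $2/\vert\ln\varepsilon\vert$), one checks that $\psi(((1-\chi)T)\ast\rho_{\varepsilon})=0$ for small $\varepsilon$ whenever $\operatorname{supp}\psi\subset\{\chi=1\}$; hence near $x_{0}$ the representative $(T\ast\rho_{\varepsilon})_{\varepsilon}$ may be replaced by $((\chi T)\ast\rho_{\varepsilon})_{\varepsilon}$ modulo $\mathcal{N}^{\sigma}$, and by Proposition \ref{pro-coinc} also by $((\chi T)\ast\phi_{\varepsilon})_{\varepsilon}$ with $\phi_{\varepsilon}$ a net of mollifiers. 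Thus both wave fronts over a neighbourhood of $x_{0}$ are computed from $S$, whose Fourier transform is entire and satisfies, by the representation $T=\sum_{\gamma}a_{\gamma}D^{\gamma}f_{\gamma}$ of compactly supported ultradistributions together with $(\ref{12})$, the global bound $\vert\widehat{S}(\eta)\vert\leq Ce^{L\vert\eta\vert^{1/(3\sigma-1)}}$.

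For the inclusion $WF_{g}^{\sigma}(T)\subseteq WF^{\sigma}(T)$ I assume $(x_{0},\xi_{0})\notin WF^{\sigma}(T)$, so there is a conic neighbourhood $\Gamma$ of $\xi_{0}$ and constants $c,k>0$ with $\vert\widehat{\phi T}(\xi)\vert\leq ce^{-k\vert\xi\vert^{1/\sigma}}$ on $\Gamma$ for a suitable cut-off $\phi$; take $S=\phi T$. For $\widetilde{\phi}\in D^{\sigma}$ supported where $\phi\equiv1$ and $\widetilde{\phi}\equiv1$ near $x_{0}$, one has $\mathcal{F}(\widetilde{\phi}(T\ast\rho_{\varepsilon}))=\widehat{\widetilde{\phi}}\ast(\widehat{S}\,\widehat{\rho_{\varepsilon}})$ for small $\varepsilon$. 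Splitting the $\eta$-integral into the regions $A$ and $B$ of Proposition \ref{ref5} and Lemma \ref{lem4}, with $\delta$ so small that $A\subset\Gamma$ and $\vert\xi\vert/2^{\sigma}<\vert\eta\vert<2^{\sigma}\vert\xi\vert$ on $A$: on $A$ the decay of $\widehat{S}$ in $\Gamma$ together with $\vert\widehat{\rho_{\varepsilon}}\vert\leq\Vert\phi\Vert_{L^{1}}$ gives $ce^{-\frac{k}{2}\vert\xi\vert^{1/\sigma}}$; on $B$ the Gevrey-$\sigma$ decay $\vert\widehat{\widetilde{\phi}}(\xi-\eta)\vert\leq ce^{-k^{\prime}\delta(\vert\xi\vert^{1/\sigma}+\vert\eta\vert^{1/\sigma})}$ absorbs the slower growth $e^{L\vert\eta\vert^{1/(3\sigma-1)}}$ of $\widehat{S}$ (here $1/(3\sigma-1)<1/\sigma$), again yielding $ce^{-k^{\prime\prime}\vert\xi\vert^{1/\sigma}}$. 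Neither bound carries a bad $\varepsilon$-factor, so $\vert\mathcal{F}(\widetilde{\phi}(T\ast\rho_{\varepsilon}))(\xi)\vert\leq ce^{-k_{2}\vert\xi\vert^{1/\sigma}}$ on a subcone $\Lambda\subset\subset\Gamma$, which is stronger than the defining estimate of $WF_{g}^{\sigma}$; hence $(x_{0},\xi_{0})\notin WF_{g}^{\sigma}(T)$.

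Conversely, assume $(x_{0},\xi_{0})\notin WF_{g}^{\sigma}(T)$, with cut-off $\widetilde{\phi}$ and cone $\Gamma$ giving $\vert\mathcal{F}(\widetilde{\phi}(T\ast\rho_{\varepsilon}))(\xi)\vert\leq ce^{k_{1}\varepsilon^{-1/(2\sigma-1)}-k_{2}\vert\xi\vert^{1/\sigma}}$ on $\Gamma$. After the reduction above I work with $S=\chi T$ and the mollifier $\phi_{\varepsilon}$ ($\check{\phi}=\phi$), via the identity $\mathcal{F}(\widetilde{\phi}(S\ast\phi_{\varepsilon}))(\xi)-\widehat{\widetilde{\phi}S}(\xi)=\langle S,(\widetilde{\phi}e^{-i\cdot\xi})\ast\phi_{\varepsilon}-\widetilde{\phi}e^{-i\cdot\xi}\rangle$. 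Since $\widetilde{\phi}e^{-i\cdot\xi}\in D^{\sigma}$ with Gevrey constant $O(1+\vert\xi\vert)$, Corollary \ref{corol1} bounds the derivatives of the tested function by $c(1+\vert\xi\vert)^{\vert\alpha\vert+1}\alpha!^{\sigma}e^{-k_{3}\varepsilon^{-1/(2\sigma-1)}}$; inserting this into the continuity estimate for $S\in E_{3\sigma-1}^{\prime}$ and optimising the free parameter $h$ in $\sup_{\alpha}h^{\vert\alpha\vert}(1+\vert\xi\vert)^{\vert\alpha\vert}\alpha!^{\sigma}/\alpha!^{3\sigma-1}\sim\exp(\tau\vert\xi\vert^{1/(2\sigma-1)})$ gives, for every $\tau>0$, the difference bound $c_{\tau}e^{-k_{3}\varepsilon^{-1/(2\sigma-1)}+\tau\vert\xi\vert^{1/(2\sigma-1)}}$. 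Combining with the $WF_{g}^{\sigma}$-estimate and the $\mathcal{N}^{\sigma}$-contributions (which are $\leq ce^{-k\varepsilon^{-1/(2\sigma-1)}}$ for every $k$), I get on $\Gamma$ that $\vert\widehat{\widetilde{\phi}T}(\xi)\vert$ is dominated by $ce^{k_{1}\varepsilon^{-1/(2\sigma-1)}-k_{2}\vert\xi\vert^{1/\sigma}}+c_{\tau}e^{-k_{3}\varepsilon^{-1/(2\sigma-1)}+\tau\vert\xi\vert^{1/(2\sigma-1)}}$. Choosing $\varepsilon=\varepsilon(\xi)$ via $\varepsilon^{-1/(2\sigma-1)}=\lambda\vert\xi\vert^{1/\sigma}$ with $\lambda$ small (exactly as in the proof of $\mathcal{G}^{\sigma,\infty}(\Omega)\cap D_{3\sigma-1}^{\prime}(\Omega)=E^{\sigma}(\Omega)$), the first term becomes $ce^{-(k_{2}-\lambda k_{1})\vert\xi\vert^{1/\sigma}}$ and, since $1/(2\sigma-1)<1/\sigma$, the second is $\leq ce^{-\frac{\lambda k_{3}}{2}\vert\xi\vert^{1/\sigma}}$ for large $\vert\xi\vert$; hence $\vert\widehat{\widetilde{\phi}T}(\xi)\vert\leq ce^{-\delta\vert\xi\vert^{1/\sigma}}$ on $\Gamma$, i.e. $(x_{0},\xi_{0})\notin WF^{\sigma}(T)$.

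I expect the delicate point to be this converse inclusion: the generalized estimate carries the $\varepsilon$-amplification $e^{k_{1}\varepsilon^{-1/(2\sigma-1)}}$, so one cannot simply let $\varepsilon\to0$, and the whole argument hinges on $\sigma>1$ making $1/(2\sigma-1)<1/\sigma$, so that both the $\xi$-growth $e^{\tau\vert\xi\vert^{1/(2\sigma-1)}}$ produced by differentiating $e^{-ix\xi}$ and the term coming from the optimal $\varepsilon(\xi)$ stay below the target Gevrey rate $\vert\xi\vert^{1/\sigma}$. Keeping explicit track of the $\xi$-dependence of the $D^{\sigma}$-seminorms of $\widetilde{\phi}e^{-i\cdot\xi}$ in Corollary \ref{corol1}, and verifying the support/localisation claims for $\rho_{\varepsilon}$ in the first step, are the technical points that require the most care.
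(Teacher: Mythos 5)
Your overall architecture is the paper's own: you localize with a cut-off $\chi$ using the shrinking support of $\rho _{\varepsilon }$, pass from $\rho _{\varepsilon }$ to $\phi _{\varepsilon }$ via Proposition \ref{pro-coinc}, and then prove the two inclusions by the paper's two devices --- a convolution splitting over the regions $A$ and $B$ for one inclusion, and a Corollary \ref{corol1}-type difference estimate followed by the choice $\varepsilon ^{-\frac{1}{2\sigma -1}}=\lambda \left\vert \xi \right\vert ^{\frac{1}{\sigma }}$ for the other. In the inclusion $WF_{g}^{\sigma }\left( T\right) \subseteq WF^{\sigma }\left( T\right) $ you in fact improve on the paper: where the paper invokes Lemma \ref{lem4} and the absorption (\ref{wf22}), ending with the $\varepsilon $-dependent bound (\ref{wf5}), you use the trivial estimate $\left\vert \widehat{\rho _{\varepsilon }}\right\vert \leq \left\Vert \rho _{\varepsilon }\right\Vert _{L^{1}}\leq \left\Vert \phi \right\Vert _{L^{1}}$; your bounds on $A$ (decay of $\widehat{S}$ in $\Gamma $, with $\left\vert \eta \right\vert >2^{-\sigma }\left\vert \xi \right\vert $) and on $B$ (decay of $\widehat{\widetilde{\phi }}$ beating $e^{L\left\vert \eta \right\vert ^{1/(3\sigma -1)}}$ since $\frac{1}{3\sigma -1}<\frac{1}{\sigma }$) are correct and $\varepsilon $-uniform, so this half is fine and cleaner than the published argument.

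The converse inclusion contains a genuine gap, precisely at the point you yourself flag as delicate. You claim that Corollary \ref{corol1}, applied to $f_{\xi }=\widetilde{\phi }e^{-i\left\langle \cdot ,\xi \right\rangle }$ whose Gevrey constant is $O\left( 1+\left\vert \xi \right\vert \right) $, yields $\sup_{L}\left\vert \partial ^{\alpha }\left( f_{\xi }\ast \phi _{\varepsilon }-f_{\xi }\right) \right\vert \leq c\left( 1+\left\vert \xi \right\vert \right) ^{\left\vert \alpha \right\vert +1}\alpha !^{\sigma }e^{-k_{3}\varepsilon ^{-1/(2\sigma -1)}}$ with a rate $k_{3}$ \emph{independent of} $\xi $. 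The proof of Proposition \ref{pro2} does not give this: in (\ref{1*5}) the admissible rate $k$ is coupled to the Gevrey constant $c_{f}$ through the constraint $2^{\sigma }k^{2\sigma -1}bTc_{f}\leq \frac{1}{2a}$, and $b$ cannot be shrunk with $\xi $ without losing all control of $\left\Vert \phi \right\Vert _{b,\sigma }$; hence with $c_{f_{\xi }}\sim 1+\left\vert \xi \right\vert $ the method yields at best $e^{-k_{0}\left( \left( 1+\left\vert \xi \right\vert \right) \varepsilon \right) ^{-1/(2\sigma -1)}}$. With this honest bound your endgame fails arithmetically: for $\varepsilon ^{-1/(2\sigma -1)}=\lambda \left\vert \xi \right\vert ^{1/\sigma }$ the difference term is $e^{\tau \left\vert \xi \right\vert ^{1/(2\sigma -1)}-k_{0}\lambda \left\vert \xi \right\vert ^{(\sigma -1)/(\sigma (2\sigma -1))}}$, which diverges because $\frac{1}{2\sigma -1}>\frac{\sigma -1}{\sigma \left( 2\sigma -1\right) }$; and making it small would force $\varepsilon ^{-1/(2\sigma -1)}\gtrsim \left\vert \xi \right\vert ^{2/(2\sigma -1)}$, incompatible with the requirement $\varepsilon ^{-1/(2\sigma -1)}\lesssim \left\vert \xi \right\vert ^{1/\sigma }$ coming from the $WF_{g}^{\sigma }$ term, since $\frac{2}{2\sigma -1}>\frac{1}{\sigma }$ for every $\sigma $. (To be fair, the paper's own (\ref{wf0})--(\ref{wf1}) silently assume constants uniform in $\xi $, so the published proof shares this lacuna in stronger form; your bookkeeping makes the problem visible, but your patch asserts an estimate the corollary cannot deliver.)

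The step can be repaired by a different mechanism: the spectral structure of the mollifier. Since $\phi $ is (a normalization of) the Fourier transform of $u\in D^{\left( \sigma \right) }$ with $u\equiv 1$ near $0$, $\widehat{\phi }\left( \varepsilon \eta \right) -1$ vanishes for $\left\vert \eta \right\vert \leq r_{0}/\varepsilon $, so up to a constant factor
\begin{equation*}
\mathcal{F}\bigl( \widetilde{\phi }\left( S\ast \phi _{\varepsilon }\right) \bigr) \left( \xi \right) -\mathcal{F}\left( \widetilde{\phi }S\right) \left( \xi \right) =\int_{\left\vert \eta \right\vert \geq r_{0}/\varepsilon }\widehat{\widetilde{\phi }}\left( \xi -\eta \right) \widehat{S}\left( \eta \right) \bigl( \widehat{\phi }\left( \varepsilon \eta \right) -1\bigr) d\eta ,
\end{equation*}
and with $\vert \widehat{\widetilde{\phi }}\left( \zeta \right) \vert \leq ce^{-\mu \left\vert \zeta \right\vert ^{1/\sigma }}$ and $\vert \widehat{S}\left( \eta \right) \vert \leq Ce^{L\left\vert \eta \right\vert ^{1/(3\sigma -1)}}$ this is bounded by $C^{\prime }e^{\mu \left\vert \xi \right\vert ^{1/\sigma }-\mu ^{\prime }\left( r_{0}/\varepsilon \right) ^{1/\sigma }}$. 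Along your choice of $\varepsilon \left( \xi \right) $ one has $\left( r_{0}/\varepsilon \right) ^{1/\sigma }\sim \left\vert \xi \right\vert ^{(2\sigma -1)/\sigma ^{2}}$ with $\frac{2\sigma -1}{\sigma ^{2}}>\frac{1}{\sigma }$, so the difference is $\leq ce^{-\delta \left\vert \xi \right\vert ^{1/\sigma }}$ for large $\left\vert \xi \right\vert \in \Gamma $, and $WF^{\sigma }\left( T\right) \subseteq WF_{g}^{\sigma }\left( T\right) $ follows. With that substitution your proof closes; as written, the converse direction does not.
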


\begin{proof}
Let $S\in E_{3\sigma -1}^{\prime }\left( \Omega \right) \subset E_{\sigma
}^{\prime }\left( \Omega \right) $ and $\psi \in D^{\sigma }\left( \Omega
\right) ,$ we have
\begin{equation*}
\left\vert \mathcal{F}\left( \psi \left( S\ast \phi _{\varepsilon }\right)
\right) \left( \xi \right) -\mathcal{F}\left( \psi S\right) \left( \xi
\right) \right\vert =\left\vert \left\langle S\left( x\right) ,\left( \psi
\left( x\right) e^{-i\xi x}\right) \ast \breve{\phi}_{\varepsilon }\left(
x\right) -\left( \psi \left( x\right) e^{-i\xi x}\right) \right\rangle
\right\vert ,
\end{equation*}%
then $\exists L$ a compact of $\Omega $ such that $\forall h>0,\exists c>0,$
\begin{equation*}
\left\vert \mathcal{F}\left( \psi \left( S\ast \phi _{\varepsilon }\right)
\right) \left( \xi \right) -\mathcal{F}\left( \psi S\right) \left( \xi
\right) \right\vert \leq c\sup_{\alpha \in \mathbb{Z}_{+}^{m},x\in L}\frac{%
h^{\left\vert \alpha \right\vert }}{\alpha !^{\sigma }}\left\vert \left(
\partial _{x}^{\alpha }\left( \psi \left( x\right) e^{-i\xi x}\ast \breve{%
\phi}_{\varepsilon }\left( x\right) -\psi \left( x\right) e^{-i\xi x}\right)
\right) \right\vert
\end{equation*}%
We have $e^{-i\xi }\psi \in D^{\sigma }\left( \Omega \right) ,$ from
corollary \ref{corol1}, $\exists c_{2}>0,\forall k_{0}>0,\exists \eta
>0,\forall \varepsilon \leq \eta ,$%
\begin{equation}
\sup_{\alpha \in \mathbb{Z}_{+}^{m},x\in L}\frac{c_{2}^{\left\vert \alpha
\right\vert }}{\alpha !^{\sigma }}\left\vert \partial _{x}^{\alpha }\left(
\psi \left( x\right) e^{-i\xi x}\ast \breve{\phi}_{\varepsilon }\left(
x\right) -\psi \left( x\right) e^{-i\xi x}\right) \right\vert \leq
c_{2}e^{-k_{0}\varepsilon ^{-\frac{1}{2\sigma -1}}},  \label{wf0}
\end{equation}%
so there exist $c^{\prime }>0,\forall k_{0}>0,\exists \eta >0,\forall
\varepsilon \leq \eta ,$ such that
\begin{equation}
\left\vert \mathcal{F}\left( \psi S\right) \left( \xi \right) -\mathcal{F}%
\left( \psi \left( S\ast \phi _{\varepsilon }\right) \right) \left( \xi
\right) \right\vert \leq c^{\prime }e^{-k_{0}\varepsilon ^{-\frac{1}{2\sigma
-1}}}  \label{wf1}
\end{equation}%
Let $T\in D_{3\sigma -1}^{\prime }\left( \Omega \right) \cap \mathcal{G}%
^{\sigma }\left( \Omega \right) $ and $\left( x_{0},\xi _{0}\right) \notin
WF_{g}^{\sigma }\left( T\right) $, then there exist $\chi \in D^{\sigma
}\left( \Omega \right) ,\chi \left( x\right) =1$ in a neighborhood of $%
x_{0}, $ and a conic neighborhood $\Gamma $ of $\xi _{0}$, $\exists
k_{1}>0,\exists k_{2}>0,\exists c_{1}>0,\exists \varepsilon _{0}\in \left]
0,1\right[ ,$ such that $\forall \xi \in \Gamma ,\forall \varepsilon \leq
\varepsilon _{0}, $%
\begin{equation}
\left\vert \mathcal{F}\left( \chi \left( T\ast \rho _{\varepsilon }\right)
\right) \left( \xi \right) \right\vert \leq c_{1}e^{k_{1}\varepsilon ^{-%
\frac{1}{2\sigma -1}}-k_{2}\left\vert \xi \right\vert ^{\frac{1}{\sigma }}}
\label{wf2}
\end{equation}%
let $\psi \in D^{\sigma }\left( \Omega \right) $ equals $1$ in neighborhood
of $x_{0}$ such that for sufficiently small $\varepsilon $ we have $\chi
\equiv 1$ on $supp\psi +B\left( 0,\frac{2}{\left\vert \ln \varepsilon
\right\vert }\right) $, and let $\varphi \in D^{\sigma }\left( B\left(
0,2\right) \right) ,$ $0\leq \varphi \leq 1$ and $\varphi \equiv 1$ on $%
B\left( 0,1\right) ,$ then there exist $\varepsilon _{0}<1,$ such that $%
\forall \varepsilon <\varepsilon _{0},$
\begin{equation}
\psi \left( T\ast \rho _{\varepsilon }\right) \left( x\right) =\psi \left(
\chi T\ast \rho _{\varepsilon }\right) \left( x\right)  \notag
\end{equation}%
where $\rho _{\varepsilon }\left( x\right) =\dfrac{1}{\varepsilon ^{m}}%
\varphi \left( x\left\vert \ln \varepsilon \right\vert \right) \phi \left(
\dfrac{x}{\varepsilon }\right) $. As $\chi T\in E_{3\sigma -1}^{\prime
}\left( \Omega \right) $, then, from proposition \ref{pro-coinc},
\begin{equation*}
\psi \left( T\ast \rho _{\varepsilon }\right) \left( x\right) =\psi \left(
\chi T\ast \rho _{\varepsilon }\right) \left( x\right) =\psi \left( \chi
T\ast \phi _{\varepsilon }\right) \left( x\right)
\end{equation*}

Let $\varepsilon \leq \min \left( \eta ,\varepsilon _{0}\right) $ and $\xi
\in \Gamma $, we have%
\begin{eqnarray*}
\left\vert \mathcal{F}\left( \psi T\right) \left( \xi \right) \right\vert
&\leq &\left\vert \mathcal{F}\left( \psi T\right) \left( \xi \right) -%
\mathcal{F}\left( \psi \left( T\ast \rho _{\varepsilon }\right) \right)
\left( \xi \right) \right\vert +\left\vert \mathcal{F}\left( \chi \left(
T\ast \rho _{\varepsilon }\right) \right) \left( \xi \right) \right\vert \\
&\leq &\left\vert \mathcal{F}\left( \psi \chi T\right) \left( \xi \right) -%
\mathcal{F}\left( \psi \left( \chi T\ast \phi _{\varepsilon }\right) \right)
\left( \xi \right) \right\vert +\left\vert \mathcal{F}\left( \chi \left(
T\ast \rho _{\varepsilon }\right) \right) \left( \xi \right) \right\vert
\end{eqnarray*}%
then by $\left( \ref{wf1}\right) $and $\left( \ref{wf2}\right) ,$ we obtain
\begin{equation*}
\left\vert \mathcal{F}\left( \psi T\right) \left( \xi \right) \right\vert
\leq c^{\prime }e^{-k_{0}\varepsilon ^{-\frac{1}{2\sigma -1}%
}}+c_{1}e^{k_{1}\varepsilon ^{-\frac{1}{2\sigma -1}}-k_{2}\left\vert \xi
\right\vert ^{\frac{1}{\sigma }}}
\end{equation*}%
Take $c=\max \left( c^{\prime },c_{1}\right) ,$ $\varepsilon =\left( \dfrac{%
k_{1}}{\left( k_{2}-r\right) \left\vert \xi \right\vert ^{\frac{1}{\sigma }}}%
\right) ^{2\sigma -1},r\in \left] 0,k_{2}\right[ ,k_{0}=\dfrac{k_{1}r}{%
k_{2}-r}$, then $\exists \delta >0,\exists c>0,$ such that
\begin{equation*}
\left\vert \mathcal{F}\left( \chi T\right) \left( \xi \right) \right\vert
\leq ce^{-\delta \left\vert \xi \right\vert ^{\frac{1}{\sigma }}},
\end{equation*}%
which proves that $\left( x_{0},\xi _{0}\right) \notin WF^{\sigma }\left(
T\right) ,$ i.e. $WF^{\sigma }\left( T\right) \subset WF_{g}^{\sigma }\left(
T\right) .$

Suppose $\left( x_{0},\xi _{0}\right) \notin WF^{\sigma }\left( T\right) $,
then there exist $\chi \in D^{\sigma }\left( \Omega \right) ,\chi \left(
x\right) =1$ in a neighborhood of $x_{0}$, a conic neighborhood $\Gamma $ of
$\xi _{0}$, $\exists \lambda >0,\exists c_{1}>0,$ such that $\forall \xi \in
\Gamma ,$%
\begin{equation}
\left\vert \mathcal{F}\left( \chi T\right) \left( \xi \right) \right\vert
\leq c_{1}e^{-\lambda \left\vert \xi \right\vert ^{\frac{1}{\sigma }}}
\label{wff1}
\end{equation}%
Let also $\psi \in D^{\sigma }\left( \Omega \right) $ equals $1$ in
neighborhood of $x_{0}$ such that for sufficiently small $\varepsilon $ we
have $\chi \equiv 1$ on $supp\psi +B\left( 0,\frac{2}{\left\vert \ln
\varepsilon \right\vert }\right) $, then there exist $\varepsilon _{0}<1, $
such that $\forall \varepsilon <\varepsilon _{0},$
\begin{equation}
\psi \left( T\ast \rho _{\varepsilon }\right) \left( x\right) =\psi \left(
\chi T\ast \rho _{\varepsilon }\right) \left( x\right)  \notag
\end{equation}

We have%
\begin{equation*}
\mathcal{F}\left( \psi \left( T\ast \rho _{\varepsilon }\right) \right)
\left( \xi \right) =\int \mathcal{F}\left( \psi \right) \left( \xi -\eta
\right) \mathcal{F}\left( \chi T\right) \left( \eta \right) \mathcal{F}%
\left( \rho _{\varepsilon }\right) \left( \eta \right) d\eta
\end{equation*}%
Let $\Lambda $ be a conic neighborhood of $\xi _{0}$ such that, $\overline{%
\Lambda }\subset \Gamma .$ For a fixed $\xi \in \Lambda $, we have
\begin{eqnarray*}
\mathcal{F}\left( \psi \left( \chi T\ast \rho _{\varepsilon }\right) \right)
\left( \xi \right) &=&\int_{A}\mathcal{F}\left( \psi \right) \left( \xi
-\eta \right) \mathcal{F}\left( \chi T\right) \left( \eta \right) \mathcal{F}%
\left( \rho _{\varepsilon }\right) \left( \eta \right) d\eta + \\
&&\int_{B}\mathcal{F}\left( \psi \right) \left( \xi -\eta \right) \mathcal{F}%
\left( \chi T\right) \left( \eta \right) \mathcal{F}\left( \rho
_{\varepsilon }\right) \left( \eta \right) d\eta \text{ \ ,}
\end{eqnarray*}%
where $A=\left\{ \eta ;\left\vert \xi -\eta \right\vert ^{\frac{1}{\sigma }%
}\leq \delta \left( \left\vert \xi \right\vert ^{\frac{1}{\sigma }%
}+\left\vert \eta \right\vert ^{\frac{1}{\sigma }}\right) \right\} $ and$\
B=\left\{ \eta ;\left\vert \xi -\eta \right\vert ^{\frac{1}{\sigma }}>\delta
\left( \left\vert \xi \right\vert ^{\frac{1}{\sigma }}+\left\vert \eta
\right\vert ^{\frac{1}{\sigma }}\right) \right\} $. We choose $\delta $
sufficiently small such that $A\subset \Gamma $ and $\dfrac{\left\vert \xi
\right\vert }{2^{\sigma }}<\left\vert \eta \right\vert <2^{\sigma
}\left\vert \xi \right\vert $. Since $\psi \in D^{\sigma }\left( \Omega
\right) $, then $\exists \mu >0,\exists c_{2}>0,\forall \xi \in \mathbb{R}%
^{m},$
\begin{equation*}
\left\vert \mathcal{F}\left( \psi \right) \left( \xi \right) \right\vert
\leq c_{2}\exp \left( -\mu \left\vert \xi \right\vert ^{\frac{1}{\sigma }%
}\right) ,
\end{equation*}%
Then $\exists c>0,\exists \varepsilon _{0}\in \left] 0,1\right[ ,\forall
\varepsilon \leq \varepsilon _{0},$
\begin{eqnarray}
\left\vert \int_{A}\mathcal{F}\left( \psi \right) \left( \xi -\eta \right)
\mathcal{F}\left( \chi T\right) \left( \eta \right) \mathcal{F}\left( \rho
_{\varepsilon }\right) \left( \eta \right) d\eta \right\vert &\leq &c\exp
\left( -\frac{\lambda }{2}\left\vert \xi \right\vert ^{\frac{1}{\sigma }%
}\right) \times  \notag \\
&&\times \left\vert \int_{A}\exp \left( -\mu \left\vert \eta -\xi
\right\vert ^{\frac{1}{\sigma }}\right) \mathcal{F}\left( \rho _{\varepsilon
}\right) \left( \eta \right) d\eta \right\vert  \notag
\end{eqnarray}%
From lemma \ref{lem4}, $\exists c_{3}>0,\exists \nu >0,$ $\exists
\varepsilon _{0}>0$, such that $\forall \varepsilon \in \left] 0,\varepsilon
_{0}\right] ,$
\begin{equation*}
\left\vert \mathcal{F}\left( \rho _{\varepsilon }\right) \left( \xi \right)
\right\vert \leq c_{3}\varepsilon ^{-m}e^{-\nu \varepsilon ^{\frac{1}{\sigma
}}\left\vert \xi \right\vert ^{\frac{1}{\sigma }}},\forall \xi \in \mathbb{R}%
^{m},
\end{equation*}%
then $\exists c>0,$ such that
\begin{eqnarray*}
\left\vert \int_{A}\mathcal{F}\left( \psi \right) \left( \xi -\eta \right)
\mathcal{F}\left( \chi T\right) \left( \eta \right) \mathcal{F}\left( \rho
_{\varepsilon }\right) \left( \eta \right) d\eta \right\vert &\leq
&c\varepsilon ^{-m}\exp \left( -\frac{\lambda }{2}\left\vert \xi \right\vert
^{\frac{1}{\sigma }}\right) \times \\
&&\times \int_{A}\exp \left( -\mu \left\vert \eta -\xi \right\vert ^{\frac{1%
}{\sigma }}\right) \exp \left( -\nu \varepsilon ^{\frac{1}{\sigma }%
}\left\vert \eta \right\vert ^{\frac{1}{\sigma }}\right) d\eta
\end{eqnarray*}%
We have $\exists k>0,\forall \varepsilon \in \left] 0,\varepsilon _{0}\right]
,$%
\begin{equation}
\varepsilon ^{-m}\exp \left( -\nu \varepsilon ^{\frac{1}{\sigma }}\left\vert
\eta \right\vert ^{\frac{1}{\sigma }}\right) \leq \exp \left( k\varepsilon
^{-\frac{1}{2\sigma -1}}\right) ,  \label{wf22}
\end{equation}%
so
\begin{equation}
\left\vert \int_{A}\mathcal{F}\left( \psi \right) \left( \xi -\eta \right)
\mathcal{F}\left( \chi T\right) \left( \eta \right) \mathcal{F}\left( \rho
_{\varepsilon }\right) \left( \eta \right) d\eta \right\vert \leq c\exp
\left( k\varepsilon ^{-\frac{1}{2\sigma -1}}-\frac{\lambda }{2}\left\vert
\xi \right\vert ^{\frac{1}{\sigma }}\right)  \label{wf3}
\end{equation}%
As $\chi T\in E_{3\sigma -1}^{\prime }\left( \Omega \right) \subset
E_{\sigma }^{\prime }\left( \Omega \right) ,$ then $\forall l>0,\exists
c>0,\forall \xi \in \mathbb{R}^{m},$
\begin{equation*}
\left\vert \mathcal{F}\left( \chi T\right) \left( \xi \right) \right\vert
\leq c\exp \left( l\left\vert \xi \right\vert ^{\frac{1}{\sigma }}\right) ,
\end{equation*}%
hence, we have
\begin{eqnarray*}
\left\vert \int_{B}\mathcal{F}\left( \psi \right) \left( \xi -\eta \right)
\mathcal{F}\left( \chi T\right) \left( \eta \right) \mathcal{F}\left( \rho
_{\varepsilon }\right) \left( \eta \right) d\eta \right\vert &\leq
&c\int_{B}\exp \left( l\left\vert \eta \right\vert ^{\frac{1}{\sigma }}-\mu
\left\vert \eta -\xi \right\vert ^{\frac{1}{\sigma }}\right) \left\vert
\mathcal{F}\left( \rho _{\varepsilon }\right) \right\vert d\eta \\
&\leq &c^{\prime }\varepsilon ^{-m}\exp \left( -\mu \delta \left\vert \xi
\right\vert ^{\frac{1}{\sigma }}\right) \times \\
&&\times \int_{B}\exp \left( \left( l-\mu \delta \right) \left\vert \eta
\right\vert ^{\frac{1}{\sigma }}-\nu \varepsilon ^{\frac{1}{\sigma }%
}\left\vert \eta \right\vert ^{\frac{1}{\sigma }}\right) d\eta ,
\end{eqnarray*}%
then, taking $l-\mu \delta =-a<0$ and using (\ref{wf22})$,$ we obtain for a
constant $c>0,$
\begin{equation}
\left\vert \int_{B}\mathcal{F}\left( \psi \right) \left( \xi -\eta \right)
\mathcal{F}\left( \chi T\right) \left( \eta \right) \mathcal{F}\left( \rho
_{\varepsilon }\right) \left( \eta \right) d\eta \right\vert \leq c\exp
\left( k\varepsilon ^{-\frac{1}{2\sigma -1}}-\mu \delta \left\vert \xi
\right\vert ^{\frac{1}{\sigma }}\right)  \label{wf4}
\end{equation}%
Consequently, (\ref{wf3}) and (\ref{wf4}) give $\exists c>0,\exists
k_{1}>0,\exists k_{2}>0,$%
\begin{equation}
\left\vert \mathcal{F}\left( \psi \left( T\ast \rho _{\varepsilon }\right)
\right) \left( \xi \right) \right\vert \leq c\exp \left( k_{1}\varepsilon ^{-%
\frac{1}{2\sigma -1}}-k_{2}\left\vert \xi \right\vert ^{\frac{1}{\sigma }%
}\right)  \label{wf5}
\end{equation}

which gives that $\left( x_{0},\xi _{0}\right) \notin WF_{g}^{\sigma }\left(
T\right) ,$ so $WF_{g}^{\sigma }\left( T\right) \subset WF^{\sigma }\left(
T\right) $ which ends the proof.
\end{proof}

\section{Generalized H\"{o}rmander's theorem}

To extend the generalized H\"{o}rmander's result on the wave front set of
the product, define $WF_{g}^{\sigma }\left( f\right) +WF_{g}^{\sigma }\left(
g\right) ,$ where $f,g\in \mathcal{G}^{\sigma }\left( \Omega \right) ,$ as
the set
\begin{equation*}
\left\{ \left( x,\xi +\eta \right) ;\left( x,\xi \right) \in WF_{g}^{\sigma
}\left( f\right) ,\left( x,\eta \right) \in WF_{g}^{\sigma }\left( g\right)
\right\}
\end{equation*}%
We recall the following fundamental lemma, see \cite{Hor-Kun} for the proof.

\begin{lemma}
\label{lem2}Let $\sum_{1}$, $\sum_{2}$ be closed cones in $\mathbb{R}%
^{m}\backslash \left\{ 0\right\} ,$ such that $0\notin \sum_{1}+\sum_{2}$ ,
then

i) $\overline{\sum\nolimits_{1}+\sum\nolimits_{2}}^{\mathbb{R}^{m}/\left\{
0\right\} }=\left( \sum\nolimits_{1}+\sum\nolimits_{2}\right) \cup
\sum\nolimits_{1}\cup \sum\nolimits_{2}$

ii) For any open conic neighborhood $\Gamma $ of $\sum\nolimits_{1}+\sum%
\nolimits_{2}$ in $\mathbb{R}^{m}\backslash \left\{ 0\right\} ,$ one can
find open conic neighborhoods of $\Gamma _{1},$ $\Gamma _{2}$ in $\mathbb{R}%
^{m}\backslash \left\{ 0\right\} $ of, respectively, $\sum\nolimits_{1},\sum%
\nolimits_{2}$ , such that
\begin{equation*}
\Gamma _{1}+\Gamma _{2}\subset \Gamma
\end{equation*}
\end{lemma}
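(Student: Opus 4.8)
The plan is to reduce both assertions to the compact sections $A=\sum\nolimits_{1}\cap S^{m-1}$ and $B=\sum\nolimits_{2}\cap S^{m-1}$ of the two cones on the unit sphere; these are compact because $\sum\nolimits_{1},\sum\nolimits_{2}$ are closed (I assume both nonempty, the contrary case being trivial). The hypothesis $0\notin\sum\nolimits_{1}+\sum\nolimits_{2}$ is exactly $A\cap(-B)=\emptyset$, so $d:=\mathrm{dist}(A,-B)>0$ and hence $|u+v|\geq d$ for all $u\in A,\ v\in B$. Writing an arbitrary $\xi\in\sum\nolimits_{1}$, $\eta\in\sum\nolimits_{2}$ as $\xi=su$, $\eta=tv$ with $u\in A,\ v\in B,\ s,t>0$, the elementary identity $|su+tv|^{2}=(s-t)^{2}+st\,|u+v|^{2}$ together with $(s+t)^{2}=(s-t)^{2}+4st$ gives, with $c:=\min(1,d/2)$, the key lower bound
\begin{equation*}
|\xi+\eta|\geq c\,(|\xi|+|\eta|),\qquad\forall\,\xi\in\sum\nolimits_{1},\ \eta\in\sum\nolimits_{2}.
\end{equation*}
This single estimate, which is the only place the hypothesis is used, drives both parts.

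For i) the inclusion $\supseteq$ is easy: $\sum\nolimits_{1}+\sum\nolimits_{2}$ lies in its own closure, and for $\xi\in\sum\nolimits_{1}$ and any fixed $\eta_{0}\in\sum\nolimits_{2}$ the points $\xi+t\eta_{0}\in\sum\nolimits_{1}+\sum\nolimits_{2}$ tend to $\xi\neq0$ as $t\to0^{+}$, so $\sum\nolimits_{1}$ (and symmetrically $\sum\nolimits_{2}$) sits in the closure taken in $\mathbb{R}^{m}\setminus\{0\}$. For $\subseteq$, I would take $\zeta\neq0$ and a sequence $\xi_{n}+\eta_{n}\to\zeta$ with $\xi_{n}\in\sum\nolimits_{1}$, $\eta_{n}\in\sum\nolimits_{2}$. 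The key bound gives $|\xi_{n}|+|\eta_{n}|\leq|\xi_{n}+\eta_{n}|/c$, which is bounded, so after passing to a subsequence $\xi_{n}\to\xi$ and $\eta_{n}\to\eta$ in $\mathbb{R}^{m}$ with $\zeta=\xi+\eta$. A short case distinction finishes it: if both limits are nonzero then $\xi\in\sum\nolimits_{1}$, $\eta\in\sum\nolimits_{2}$ by closedness and $\zeta\in\sum\nolimits_{1}+\sum\nolimits_{2}$; if exactly one vanishes, say $\eta=0$, then $\zeta=\xi\in\sum\nolimits_{1}$; and both vanishing would force $\zeta=0$, which is excluded.

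For ii) I would argue by contradiction with a shrinking family of conic neighborhoods. Put $\Gamma_{j}^{(n)}=\{\,\zeta\neq0:\mathrm{dist}(\zeta/|\zeta|,\,\sum\nolimits_{j}\cap S^{m-1})<1/n\,\}$, an open conic neighborhood of $\sum\nolimits_{j}$ shrinking to it. If no admissible pair existed, then for every $n$ there would be $\gamma_{1}^{(n)}\in\Gamma_{1}^{(n)}$, $\gamma_{2}^{(n)}\in\Gamma_{2}^{(n)}$ with $\gamma_{1}^{(n)}+\gamma_{2}^{(n)}\notin\Gamma$. Since $\Gamma$ and the $\Gamma_{j}^{(n)}$ are cones, I may normalize $|\gamma_{1}^{(n)}|+|\gamma_{2}^{(n)}|=1$ and extract convergent subsequences $\gamma_{j}^{(n)}\to\gamma_{j}$ with $|\gamma_{1}|+|\gamma_{2}|=1$. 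Any nonzero limit $\gamma_{j}$ has its direction in $\sum\nolimits_{j}\cap S^{m-1}$, hence $\gamma_{j}\in\sum\nolimits_{j}$; by the same case distinction as in i), $\gamma_{1}+\gamma_{2}$ lies in $(\sum\nolimits_{1}+\sum\nolimits_{2})\cup\sum\nolimits_{1}\cup\sum\nolimits_{2}$ and is nonzero. Since $\Gamma$ is taken to be a conic neighborhood of this closed set of part i) and is open, $\gamma_{1}^{(n)}+\gamma_{2}^{(n)}\in\Gamma$ for large $n$, contradicting the choice of the sequences.

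I expect the delicate point to be exactly the bookkeeping when one of the limiting components vanishes. This is where part i) is indispensable, since the limit of $\gamma_{1}^{(n)}+\gamma_{2}^{(n)}$ may land in $\sum\nolimits_{1}$ or $\sum\nolimits_{2}$ rather than in $\sum\nolimits_{1}+\sum\nolimits_{2}$, and one must know that $\Gamma$ still contains it; in the intended application $\Gamma$ is chosen as a conic neighborhood of the whole set $(\sum\nolimits_{1}+\sum\nolimits_{2})\cup\sum\nolimits_{1}\cup\sum\nolimits_{2}$, so this is automatic. Everything else is pure compactness together with the positivity of $d=\mathrm{dist}(A,-B)$.
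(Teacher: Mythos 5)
The paper never proves this lemma at all: it is quoted with the remark ``see \cite{Hor-Kun} for the proof'', so there is no in-paper argument to compare yours with; your proposal supplies the missing proof, and its mechanics are correct and are essentially the standard compactness argument behind the cited result. The identity $|su+tv|^{2}=(s-t)^{2}+st\,|u+v|^{2}$ combined with $d=\operatorname{dist}(A,-B)>0$ (exactly where the hypothesis $0\notin\Sigma_{1}+\Sigma_{2}$ enters, via $A\cap(-B)=\emptyset$) does give $|\xi+\eta|\geq c\left( |\xi|+|\eta|\right) $ with $c=\min(1,d/2)$, and this bound makes both sequence arguments compact; the case analysis in i) is complete. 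One pedantic correction: the nonemptiness assumption in i) is not merely convenient but necessary --- if, say, $\Sigma_{2}=\emptyset\neq\Sigma_{1}$, the asserted identity reads $\emptyset=\Sigma_{1}$ and is false --- so ``the contrary case being trivial'' should rather be ``the identity must be read with both cones nonempty'', which is what the application guarantees.

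The delicate point you flag in ii) is real, and in fact it is a defect of the lemma's wording rather than of your method: as literally stated, with $\Gamma$ only a conic neighborhood of $\Sigma_{1}+\Sigma_{2}$, part ii) is false. Take in $\mathbb{R}^{2}$ the rays $\Sigma_{1}=\left\{ (t,0):t>0\right\} $, $\Sigma_{2}=\left\{ (0,t):t>0\right\} $ and $\Gamma=\Sigma_{1}+\Sigma_{2}$, the open first quadrant, which is an open conic neighborhood of the sum. Any open conic $\Gamma_{1}\supset\Sigma_{1}$ contains some $(1,-\delta)$ with $\delta>0$, and then $(1,-\delta)+(0,\delta/2)\in\Gamma_{1}+\Gamma_{2}$ has negative second coordinate, hence $\Gamma_{1}+\Gamma_{2}\not\subset\Gamma$ no matter how $\Gamma_{1},\Gamma_{2}$ are chosen. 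The correct hypothesis is that $\Gamma$ be a conic neighborhood of the closed set of part i), $\overline{\Sigma_{1}+\Sigma_{2}}=\left( \Sigma_{1}+\Sigma_{2}\right) \cup\Sigma_{1}\cup\Sigma_{2}$; this is harmless in the paper's application, since there $\xi_{0}$ lies outside that closure and $\Gamma_{0}$ can be chosen to contain it with $\xi_{0}\notin\overline{\Gamma}_{0}$. Under that reading, your contradiction argument --- the shrinking conic neighborhoods $\Gamma_{j}^{(n)}$, the normalization $|\gamma_{1}^{(n)}|+|\gamma_{2}^{(n)}|=1$ (legitimate because $\Gamma$ and its complement are stable under positive dilations), and the limit case analysis, with nondegeneracy of the limit secured by the key bound extended to the approximate directions --- goes through verbatim. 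So your proof is correct precisely in the form in which the lemma is actually used, and your closing observation pinpoints the one genuine subtlety.
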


The principal result of this section is the following theorem.

\begin{theorem}
Let $f,g\in \mathcal{G}^{\sigma }\left( \Omega \right) $ , such that $%
\forall x\in \Omega ,$%
\begin{equation}
\left( x,0\right) \notin WF_{g}^{\sigma }\left( f\right) +WF_{g}^{\sigma
}\left( g\right) ,  \label{4-1}
\end{equation}
then
\begin{equation}
WF_{g}^{\sigma }\left( fg\right) \subseteq \left( WF_{g}^{\sigma }\left(
f\right) +WF_{g}^{\sigma }\left( g\right) \right) \cup WF_{g}^{\sigma
}\left( f\right) \cup WF_{g}^{\sigma }\left( g\right)  \label{4-2}
\end{equation}
\end{theorem}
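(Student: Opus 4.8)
The plan is to prove the statement fibrewise by contraposition. Fix $x_{0}\in \Omega $ and a direction $\zeta _{0}$ with $(x_{0},\zeta _{0})$ outside the right-hand side of (\ref{4-2}); I will produce a cut-off $\phi \in D^{\sigma }\left( \Omega \right) $ with $\phi \equiv 1$ near $x_{0}$ and an open conic neighbourhood $V$ of $\zeta _{0}$ on which $\mathcal{F}\left( \phi ^{2}f_{\varepsilon }g_{\varepsilon }\right) $ obeys (\ref{3-1}). Since $\phi ^{2}\equiv 1$ near $x_{0}$, this yields $\zeta _{0}\notin \sum_{g,x_{0}}^{\sigma }\left( fg\right) $, i.e. $(x_{0},\zeta _{0})\notin WF_{g}^{\sigma }\left( fg\right) $. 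Writing $\Sigma _{1}=\sum_{g,x_{0}}^{\sigma }\left( f\right) $ and $\Sigma _{2}=\sum_{g,x_{0}}^{\sigma }\left( g\right) $, the hypothesis on $(x_{0},\zeta _{0})$ reads $\zeta _{0}\notin \left( \Sigma _{1}+\Sigma _{2}\right) \cup \Sigma _{1}\cup \Sigma _{2}$, while (\ref{4-1}) gives $0\notin \Sigma _{1}+\Sigma _{2}$, so Lemma \ref{lem2} is applicable.

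First I set up the cones. By Lemma \ref{lem2}(i) the set $\left( \Sigma _{1}+\Sigma _{2}\right) \cup \Sigma _{1}\cup \Sigma _{2}$ is the closure of $\Sigma _{1}+\Sigma _{2}$ in $\mathbb{R}^{m}\backslash \left\{ 0\right\} $, a closed cone not containing $\zeta _{0}$; I fix an open conic neighbourhood $\Gamma $ of $\Sigma _{1}+\Sigma _{2}$ with $\zeta _{0}\notin \overline{\Gamma }$ and apply Lemma \ref{lem2}(ii) to obtain open conic neighbourhoods $\Gamma _{1}\supset \Sigma _{1}$, $\Gamma _{2}\supset \Sigma _{2}$ with $\Gamma _{1}+\Gamma _{2}\subset \Gamma $, shrunk so that $\zeta _{0}\notin \overline{\Gamma _{1}}\cup \overline{\Gamma _{2}}$. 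I then choose a small open conic neighbourhood $V$ of $\zeta _{0}$ disjoint from the three closed cones $\overline{\Gamma _{1}}$, $\overline{\Gamma _{2}}$ and $\overline{\Gamma _{1}+\Gamma _{2}}$. Since by (\ref{3-6}) each $\Sigma _{i}$ is the intersection over the relevant cut-offs of the closed cones $\sum_{g}^{\sigma }\left( \phi f\right) $, a finite-intersection argument on the unit sphere, combined with part 3 of Proposition \ref{ref5} (so that $\sum_{g}^{\sigma }\left( \phi f\right) \subset \sum_{g}^{\sigma }\left( \phi _{j}f\right) $ whenever $\phi =\phi _{j}\cdot \psi $ with $\psi \in E^{\sigma }$), furnishes a single common $\phi \in D^{\sigma }\left( \Omega \right) $, $\phi \equiv 1$ near $x_{0}$, with $\sum_{g}^{\sigma }\left( \phi f\right) \subset \Gamma _{1}$ and $\sum_{g}^{\sigma }\left( \phi g\right) \subset \Gamma _{2}$. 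Hence the decay (\ref{3-1}) holds uniformly for $\mathcal{F}\left( \phi f_{\varepsilon }\right) \left( \eta \right) $ when $\eta \notin \Gamma _{1}$ and for $\mathcal{F}\left( \phi g_{\varepsilon }\right) \left( \zeta -\eta \right) $ when $\zeta -\eta \notin \Gamma _{2}$, while both factors satisfy the moderate bound (\ref{ref3}) with an arbitrarily small spatial rate $\mu _{2}>0$.

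The analytic core is the convolution identity $\mathcal{F}\left( \phi ^{2}f_{\varepsilon }g_{\varepsilon }\right) \left( \zeta \right) =c\int \mathcal{F}\left( \phi f_{\varepsilon }\right) \left( \eta \right) \mathcal{F}\left( \phi g_{\varepsilon }\right) \left( \zeta -\eta \right) d\eta $, estimated for $\zeta \in V$ by partitioning the $\eta $-space. On $\left\{ \eta \in \Gamma _{1}\right\} $ the relation $\zeta \notin \Gamma _{1}+\Gamma _{2}$ forces $\zeta -\eta \notin \Gamma _{2}$, and the angular separation of $V$ and $\Gamma _{1}$ gives $\left\vert \zeta -\eta \right\vert \geq c_{0}\max \left( \left\vert \zeta \right\vert ,\left\vert \eta \right\vert \right) $; thus the $g$-factor decays like $\exp \left( -k_{2}^{\prime }\left( \left\vert \zeta \right\vert ^{1/\sigma }+\left\vert \eta \right\vert ^{1/\sigma }\right) \right) $ and, once $\mu _{2}<k_{2}^{\prime }$, dominates the moderate $f$-factor and leaves a convergent $\eta $-integral. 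On $\left\{ \eta \notin \Gamma _{1}\right\} $ I split by magnitude: where $\left\vert \eta \right\vert \geq \lambda \left\vert \zeta \right\vert $ the $f$-decay $\exp \left( -k_{2}\left\vert \eta \right\vert ^{1/\sigma }\right) $ already produces decay in $\left\vert \zeta \right\vert $ and, via $\left\vert \zeta -\eta \right\vert ^{1/\sigma }\leq \left\vert \zeta \right\vert ^{1/\sigma }+\left\vert \eta \right\vert ^{1/\sigma }$ (valid since $\sigma \geq 1$), absorbs the moderate $g$-factor; where $\left\vert \eta \right\vert <\lambda \left\vert \zeta \right\vert $ one has $\left\vert \zeta -\eta \right\vert >\left( 1-\lambda \right) \left\vert \zeta \right\vert $ and, for $\lambda $ small, $\zeta -\eta $ lies in an enlarged cone about $\zeta _{0}$ still disjoint from $\Gamma _{2}$, so the $g$-factor supplies the decay in $\left\vert \zeta \right\vert $. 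In every piece the $\varepsilon ^{-1/\left( 2\sigma -1\right) }$ contributions combine into a single $k_{1}$, the surviving spatial exponent is $-k_{2}\left\vert \zeta \right\vert ^{1/\sigma }$ with a fixed $k_{2}>0$, and the polynomial volume of the $\eta $-integral is absorbed by slightly lowering $k_{2}$. This is exactly (\ref{3-1}) on $V$, whence $(x_{0},\zeta _{0})\notin WF_{g}^{\sigma }\left( fg\right) $.

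I expect the main obstacle to be twofold: passing from the a priori cut-off-dependent microlocal estimates to a single $\phi $ giving uniform decay outside each $\Gamma _{i}$ (handled by the compactness and Proposition \ref{ref5} reduction above), and organising the $\eta $-partition so that in each region exactly one factor decays at a rate comparable to $\left\vert \zeta \right\vert ^{1/\sigma }$ while the other is tamed by (\ref{ref3}) with an arbitrarily small exponent. The two quantitative facts that make the bookkeeping close are the subadditivity $\left\vert \zeta -\eta \right\vert ^{1/\sigma }\leq \left\vert \zeta \right\vert ^{1/\sigma }+\left\vert \eta \right\vert ^{1/\sigma }$ and the separation estimate $\left\vert \zeta -\eta \right\vert \geq c_{0}\max \left( \left\vert \zeta \right\vert ,\left\vert \eta \right\vert \right) $ for $\zeta \in V$, $\eta \in \Gamma _{1}$, the latter being precisely where the standing hypothesis (\ref{4-1}) enters through Lemma \ref{lem2}.
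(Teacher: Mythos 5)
Your proposal is correct and follows essentially the same route as the paper's proof: contraposition at a fixed $\left( x_{0},\xi _{0}\right) $, Lemma \ref{lem2} to produce the cones $\Gamma _{1},\Gamma _{2}$ with $\Gamma _{1}+\Gamma _{2}\subset \Gamma $, a single cut-off $\phi $, and the convolution $\mathcal{F}\left( \phi f_{\varepsilon }\right) \ast \mathcal{F}\left( \phi g_{\varepsilon }\right) $ split into the same three regions (one factor inside its bad cone with angular separation giving $\left\vert \xi -\eta \right\vert \gtrsim \left\vert \xi \right\vert +\left\vert \eta \right\vert $, then the complement split by $\left\vert \eta \right\vert \lessgtr r\left\vert \xi \right\vert $), with the moderate bound (\ref{ref3}) at arbitrarily small exponent absorbed by the decay. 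The only deviations are cosmetic: you partition on the $f$-cone where the paper partitions on the $g$-cone, and you spell out the compactness/common-cut-off reduction via Proposition \ref{ref5}(3) that the paper merely asserts.
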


\begin{proof}
Let $\left( x_{0},\xi _{0}\right) \notin \left( WF_{g}^{\sigma }\left(
f\right) +WF_{g}^{\sigma }\left( g\right) \right) \cup WF_{g}^{\sigma
}\left( f\right) \cup WF_{g}^{\sigma }\left( g\right) ,$ then $\exists \phi
\in D^{\sigma }\left( \Omega \right) $, $\phi \left( x_{0}\right) =1,$ $\xi
_{0}\notin \left( \sum\nolimits_{g}^{\sigma }\left( \phi f\right)
+\sum\nolimits_{g}^{\sigma }\left( \phi g\right) \right) \cup
\sum\nolimits_{g}^{\sigma }\left( \phi f\right) \cup
\sum\nolimits_{g}^{\sigma }\left( \phi g\right) .$ From (\ref{4-1}) we have $%
0\notin \sum\nolimits_{g}^{\sigma }\left( \phi f\right)
+\sum\nolimits_{g}^{\sigma }\left( \phi g\right) $ then by lemma \ref{lem2}
i), we have
\begin{equation*}
\xi _{0}\notin \left( \sum\nolimits_{g}^{\sigma }\left( \phi f\right)
+\sum\nolimits_{g}^{\sigma }\left( \phi g\right) \right) \cup
\sum\nolimits_{g}^{\sigma }\left( \phi f\right) \cup
\sum\nolimits_{g}^{\sigma }\left( \phi g\right) =\overline{%
\sum\nolimits_{g}^{\sigma }\left( \phi f\right) +\sum\nolimits_{g}^{\sigma
}\left( \phi g\right) }^{\mathbb{R}^{m}\backslash \left\{ 0\right\} }
\end{equation*}
Let $\Gamma _{0}$ be an open conic neighborhood of $\sum\nolimits_{g}^{%
\sigma }\left( \phi f\right) +\sum\nolimits_{g}^{\sigma }\left( \phi
g\right) $ in $\mathbb{R}^{m}\backslash \left\{ 0\right\} $ such that $\xi
_{0}\notin \overline{\Gamma }_{0}$ then, from lemma \ref{lem2} ii), there
exist open cones $\Gamma _{1}$ and $\Gamma _{2}$ in $\mathbb{R}%
^{m}\backslash \left\{ 0\right\} $ such that
\begin{equation*}
\sum\nolimits_{g}^{\sigma }\left( \phi f\right) \subset \Gamma _{1},\text{ }%
\sum\nolimits_{g}^{\sigma }\left( \phi g\right) \subset \Gamma _{2}\text{
and }\Gamma _{1}+\Gamma _{2}\subset \Gamma _{0}
\end{equation*}
Define $\Gamma =\mathbb{R}^{m}\backslash \overline{\Gamma }_{0},$ so
\begin{equation}
\Gamma \cap \Gamma _{2}=\emptyset \text{ and }\left( \Gamma -\Gamma
_{2}\right) \cap \Gamma _{1}=\emptyset  \label{4.3}
\end{equation}
Let $\xi \in \Gamma $ and $\varepsilon \in \left] 0,1\right] $
\begin{eqnarray*}
\mathcal{F}\left( \phi f_{\varepsilon }\phi g_{\varepsilon }\right) \left(
\xi \right) &=&\left( \mathcal{F}\left( \phi f_{\varepsilon }\right) \ast
\mathcal{F}\left( \phi g_{\varepsilon }\right) \right) \left( \xi \right) \\
&=&\int_{\Gamma _{2}}\mathcal{F}\left( \phi f_{\varepsilon }\right) \left(
\xi -\eta \right) \mathcal{F}\left( \phi g_{\varepsilon }\right) \left( \eta
\right) d\eta +\int_{\Gamma _{2}^{c}}\mathcal{F}\left( \phi f_{\varepsilon
}\right) \left( \xi -\eta \right) \mathcal{F}\left( \phi g_{\varepsilon
}\right) \left( \eta \right) d\eta \\
&=&I_{1}\left( \xi \right) +I_{2}\left( \xi \right)
\end{eqnarray*}
From (\ref{4.3}), $\exists c_{1}>0,\exists k_{1},k_{2}>0,\exists \varepsilon
_{1}>0$ such that $\forall \varepsilon \leq \varepsilon _{1},\forall \eta
\in \Gamma _{2},$
\begin{equation*}
\mathcal{F}\left( \phi f_{\varepsilon }\right) \left( \xi -\eta \right) \leq
c_{1}\exp \left( k_{1}\varepsilon ^{-\frac{1}{2\sigma -1}}-k_{2}\left\vert
\xi -\eta \right\vert ^{\frac{1}{\sigma }}\right) ,
\end{equation*}
and from (\ref{ref3}) $\exists c_{2}>0,\exists k_{3}>0,\forall
k_{4}>0,\exists \varepsilon _{2}>0,\forall \eta \in \mathbb{R}^{m},\forall
\varepsilon \leq \varepsilon _{2},$%
\begin{equation*}
\left\vert \mathcal{F}\left( \phi g_{\varepsilon }\right) \left( \eta
\right) \right\vert \leq c_{2}\exp \left( k_{2}\varepsilon ^{-\frac{1}{%
2\sigma -1}}+k_{4}\left\vert \eta \right\vert ^{\frac{1}{\sigma }}\right)
\end{equation*}
Let $\gamma >0$ sufficiently small such that $\left\vert \xi -\eta
\right\vert ^{\frac{1}{\sigma }}\geq \gamma \left( \left\vert \xi
\right\vert ^{\frac{1}{\sigma }}+\left\vert \eta \right\vert ^{\frac{1}{%
\sigma }}\right) ,\forall \eta \in \Gamma _{2}.$ Hence for $\varepsilon \leq
\min \left( \varepsilon _{1},\varepsilon _{2}\right) ,$%
\begin{equation*}
\left\vert I_{1}\left( \xi \right) \right\vert \leq c_{1}c_{2}\exp \left(
\left( k_{1}+k_{2}\right) \varepsilon ^{-\frac{1}{2\sigma -1}}-k_{2}\gamma
\left\vert \xi \right\vert ^{\frac{1}{\sigma }}\right) \int \exp \left(
-k_{2}\gamma \left\vert \eta \right\vert ^{\frac{1}{\sigma }%
}+k_{4}\left\vert \eta \right\vert ^{\frac{1}{\sigma }}\right) d\eta
\end{equation*}
take $k_{4}>k_{2}\gamma ,$ then
\begin{equation}
\left\vert I_{1}\left( \xi \right) \right\vert \leq c^{\prime }\exp \left(
k_{1}^{\prime }\varepsilon ^{-\frac{1}{2\sigma -1}}-k_{2}^{\prime
}\left\vert \xi \right\vert ^{\frac{1}{\sigma }}\right)  \label{4-4}
\end{equation}
Let $r>0,$
\begin{eqnarray*}
I_{2}\left( \xi \right) &=&\int_{\Gamma _{2}^{c}\cap \left\{ \left\vert \eta
\right\vert \leq r\left\vert \xi \right\vert \right\} }\mathcal{F}\left(
\phi f_{\varepsilon }\right) \left( \xi -\eta \right) \mathcal{F}\left( \phi
g_{\varepsilon }\right) \left( \eta \right) d\eta +\int_{\Gamma _{2}^{c}\cap
\left\{ \left\vert \eta \right\vert \geq r\left\vert \xi \right\vert
\right\} }\mathcal{F}\left( \phi f_{\varepsilon }\right) \left( \xi -\eta
\right) \mathcal{F}\left( \phi g_{\varepsilon }\right) \left( \eta \right)
d\eta \\
&=&I_{21}\left( \xi \right) +I_{22}\left( \xi \right)
\end{eqnarray*}
Choose $r$ sufficiently small such that $\left\{ \left\vert \eta \right\vert
^{\frac{1}{\sigma }}\leq r\left\vert \xi \right\vert ^{\frac{1}{\sigma }%
}\right\} \Longrightarrow \xi -\eta \notin \Gamma _{1}.$ Then $\left\vert
\xi -\eta \right\vert ^{\frac{1}{\sigma }}\geq \left( 1-r\right) \left\vert
\xi \right\vert ^{\frac{1}{\sigma }}\geq \left( 1-2r\right) \left\vert \xi
\right\vert ^{\frac{1}{\sigma }}+\left\vert \eta \right\vert ^{\frac{1}{%
\sigma }}$ , consequently $\exists c_{3}>0,\exists \lambda _{1},\lambda
_{2},\lambda _{3}>0,\exists \varepsilon _{3}>0$ such that $\forall
\varepsilon \leq \varepsilon _{3},$
\begin{eqnarray*}
\left\vert I_{21}\left( \xi \right) \right\vert &\leq &c_{3}\exp \left(
\lambda _{1}\varepsilon ^{-\frac{1}{2\sigma -1}}\right) \int \exp \left(
-\lambda _{2}\left\vert \xi -\eta \right\vert ^{\frac{1}{\sigma }}-\lambda
_{3}\left\vert \eta \right\vert ^{\frac{1}{\sigma }}\right) d\eta \\
&\leq &c_{3}\exp \left( \lambda _{1}\varepsilon ^{-\frac{1}{2\sigma -1}%
}-\lambda _{2}^{\prime }\left\vert \xi \right\vert ^{\frac{1}{\sigma }%
}\right) \int \exp \left( -\lambda _{3}^{\prime }\left\vert \eta \right\vert
^{\frac{1}{\sigma }}\right) d\eta \\
&\leq &c_{3}^{\prime }\exp \left( \lambda _{1}\varepsilon ^{-\frac{1}{%
2\sigma -1}}-\lambda _{2}^{\prime }\left\vert \xi \right\vert ^{\frac{1}{%
\sigma }}\right)
\end{eqnarray*}
If $\left\vert \eta \right\vert ^{\frac{1}{\sigma }}\geq r\left\vert \xi
\right\vert ^{\frac{1}{\sigma }},$ we have $\left\vert \eta \right\vert ^{%
\frac{1}{\sigma }}\geq \dfrac{\left\vert \eta \right\vert ^{\frac{1}{\sigma }%
}+r\left\vert \xi \right\vert ^{\frac{1}{\sigma }}}{2}$ , and then $\exists
c_{4}>0,\exists \mu _{1},\mu _{3}>0,\forall \mu _{2}>0,\exists \varepsilon
_{4}>0$ such that $\forall \varepsilon \leq \varepsilon _{4},$
\begin{eqnarray*}
\left\vert I_{21}\left( \xi \right) \right\vert &\leq &c_{4}\exp \left( \mu
_{1}\varepsilon ^{-\frac{1}{2\sigma -1}}\right) \int \exp \left( \mu
_{2}\left\vert \xi -\eta \right\vert ^{\frac{1}{\sigma }}-\mu _{3}\left\vert
\eta \right\vert ^{\frac{1}{\sigma }}\right) d\eta \\
&\leq &c_{4}\exp \left( \mu _{1}\varepsilon ^{-\frac{1}{2\sigma -1}}\right)
\int \exp \left( \mu _{2}\left\vert \xi -\eta \right\vert ^{\frac{1}{\sigma }%
}-\mu _{3}^{\prime }\left\vert \eta \right\vert ^{\frac{1}{\sigma }}-\mu
_{3}^{\prime ^{\prime }}\left\vert \xi \right\vert ^{\frac{1}{\sigma }%
}\right) d\eta ,
\end{eqnarray*}
if take $\mu _{2}<\frac{\mu _{3}^{\prime }}{2}\left( 1+\frac{1}{r}\right) ,$
we obtain
\begin{equation*}
\left\vert I_{21}\left( \xi \right) \right\vert \leq c_{4}^{\prime }\exp
\left( k_{3}^{\prime }\varepsilon ^{-\frac{1}{2\sigma -1}}-\mu _{3}^{\prime
^{\prime }}\left\vert \xi \right\vert ^{\frac{1}{\sigma }}\right) ,
\end{equation*}
which finishes the proof.
\end{proof}


\begin{thebibliography}{99}
\bibitem{A-R} A. B. Antonevich, Ya. V. Radyno. On a general method of
constructing algebras of new generalized functions. Soviet. Math. Dokl.,
vol. 43:3, ( 1991), 680-684.

\bibitem{Ben-Bouz} K. Benmeriem, C. Bouzar. Colombeau generalized functions
and solvability of differential operators. Z. Anal. Anw. 25:4 (2006),
467-477.

\bibitem{Ben-Bouz 2} K. Benmeriem, C. Bouzar. Ultraregular generalized
functions. Oran-Essenia University, Preprint 2006.

\bibitem{Col2} J. F. Colombeau. Elementary introduction to new generalized
functions. North Holland, 1984.

\bibitem{Col3} J. F. Colombeau. Multiplication of Distributions: a tool in
mathematics numerical engineering and theoretical physics. Lecture Notes in
Math. 1532, Springer, 1992.

\bibitem{Del1} A. Delcroix. Remarks on the embedding of spaces of
distributions into spaces of Colombeau generalized functions. Novi Sad J.
Math., vol. 35, n$^{\circ }$2, (2005), 27-40.

\bibitem{GKOS} M. Grosser, M. Kunzinger, M. Oberguggenberger, R. Steinbauer.
Geometric theory of generalized functions with applications to general
relativity, Kluwer Publishing, 2001.

\bibitem{Gel} I. M. Guelfand, G. E. Shilov. Generalized functions, vol. 2,
Academic Press, 1967.

\bibitem{Gram} T. Gramchev. Nonlinear maps in space of distributions, Math.
Z., 209, (1992), 101-114.

\bibitem{Hor} L. H\"{o}rmander. Distribution theory and Fourier analysis,
Springer, 1983.

\bibitem{Hor-Kun} G. H\"{o}rmann, M. Kunzinger. Microlocal properties of
basic operations in Colombeau algebras. J. Math. Anal. Appl., 261, (2001),
254-270.

\bibitem{HorObPil} G. H\"{o}rmann, M. Oberguggenberger, S. Pilipovi\'{c}.
Microlocal hypoellipticity of linear differential operators with generalized
functions as coefficients, Trans.\ Amer. Math. Soc., vol. 358, n$%
%TCIMACRO{\U{b0}}%
%BeginExpansion
{{}^\circ}%
%EndExpansion
$ 8, (2005), 3363-3383.

\bibitem{Kom} H. Komatsu. Ultradistributions I, J. Fac. Sci. Univ. Tokyo,
Sect. IA, 20, (1973), 25-105.

\bibitem{LM} J. L. Lions, E. Magenes. Non-homogeneous boundary value
problems and applications, vol.3, Springer, 1973.

\bibitem{Mart} J. A. Marti. ($\mathcal{C},\mathcal{E},\mathcal{P}$)-Sheaf
structures and applications. In M. Grosser, G. H\"{o}rmann, M. Kunzinger and
M. Oberguggenberger (Editors) Nonlinear Theory of Generalized Functions. pp
175-186. Chapman and Hall. 1999

\bibitem{NPS} M. Nedeljkov, S. Pilipovic, D. Scarpal\'{e}zos. The linear
theory of Colombeau generalized functions, Longman Scientific \& Technica,
1998.

\bibitem{Ober} M. Oberguggenberger. Multiplication of distributions and
applications to partial differential equations, Longman Scientific \&
Technical, 1992.

\bibitem{Pilip} S. Pilipovic, D. Scarpal\'{e}zos. Colombeau generalized
ultradistributions. Math. Proc. Camb. Phil. Soc., 130, (2001), 541-553

\bibitem{Rod} L. Rodino. Linear partial differential operators in Gevrey
spaces. World Scientific. 1993.

\bibitem{Schw} L. Schwartz. Sur l'impossibilit\'{e} de la multiplication des
distributions. C. R. Acad. Sci., 239, (1954), 847-848.
\end{thebibliography}
\end{document}